\theoremstyle{plain} 
\newtheorem{thm}{Theorem}[section]
\newtheorem{dfn}[thm]{Definition}
\newtheorem{eg}[thm]{Example}
\newtheorem{rmk}[thm]{Remark}
\newtheorem{prop}[thm]{Proposition}
\newtheorem{cor}[thm]{Corollary}
\newtheorem{lem}[thm]{Lemma}
\numberwithin{equation}{section}
\newcommand{\fm}{\mathfrak{m}}
\newcommand{\fp}{\mathfrak{p}}
\newcommand{\fa}{\mathfrak{a}}
\newcommand{\fb}{\mathfrak{b}}
\newcommand{\fc}{\mathfrak{c}}
\newcommand{\Ic}{\mathcal{I}_C}
\DeclareMathOperator{\ann}{ann} \DeclareMathOperator{\Ass}{Ass}
\DeclareMathOperator{\E}{E} \DeclareMathOperator{\hh}{H}
 \DeclareMathOperator{\X}{X}
\def\ac{\operatorname{\mathcal{A}_{\it C}}}
\def\bc{\operatorname{\mathcal{B}_{\it C}}}
\def\gd{\operatorname{\mathsf{G-dim}}}
\def\gkd{\operatorname{\mathsf{G}_{\it C}\mathsf{-dim}}}
\def\gomegad{\operatorname{\mathsf{G}_{\it {\omega_R}}\mathsf{-dim}}}
\def\gkkd{\operatorname{\mathsf{G}_{\it K}\mathsf{-dim}}}
\def\gc{\operatorname{\mathsf{G}_{\it C}}}
\def\gcp{\operatorname{\mathsf{G}_{\it C_{\fp}}}}
\def\gcpd{\operatorname{\mathsf{G}_{\it C_p}\mathsf{-dim_{R_p}}}}
\def\rgr{\operatorname{\mathsf{r.grade}}}
\def\rgric{\operatorname{\mathsf{r.grade}_{\mathcal{I}_C}}}
\def\gr{\operatorname{\mathsf{grade}}}
\def\Tr{\mathsf{Tr}}
\def\trk{\mathsf{Tr}_{C}}
\def\tro{\mathsf{Tr}_{\omega_R}}
\def\trc{\mathcal{T}^C}
\def\ng{\mathsf{NG}_{C}}
\DeclareMathOperator{\coker}{Coker}
\def\depth{\operatorname{\mathsf{depth}}}
\def\Ext{\operatorname{\mathsf{Ext}}}
\def\Hom{\operatorname{\mathsf{Hom}}}
\DeclareMathOperator{\id}{id} \DeclareMathOperator{\im}{im}
\DeclareMathOperator{\Supp}{Supp} \DeclareMathOperator{\Spec}{Spec}
\def\Tor{\operatorname{\mathsf{Tor}}}
\def\urltilda{\kern -.15em\lower .7ex\hbox{\~{}}\kern .04em}
\def\urldot{\kern -.10em.\kern -.10em}\def\urlhttp{http\kern -.10em\lower -.1ex
\hbox{:}\kern -.12em\lower 0ex\hbox{/}\kern -.18em\lower
0ex\hbox{/}}
\begin{document}
\baselineskip=15pt

\title[Linkage of finite $\gc$-dimension modules]
 {Linkage of finite $\gc$-dimension modules}

\bibliographystyle{amsplain}

\author[A. Sadeghi]{Arash Sadeghi}\

\address{ School of Mathematics, Institute for Research in Fundamental Sciences (IPM), P.O. Box: 19395-5746, Tehran, Iran }
\email{sadeghiarash61@gmail.com}

\keywords{Auslander class, linkage of modules, semidualizing
modules, $\gc$--dimension modules.\\
\subjclass[2000]{13C15, 13D07, 13D02, 13H10}}
\maketitle
\begin{abstract}
Let $R$ be a semiperfect commutative Noetherian ring and $C$ a semidualizing $R$--module. We study the theory of linkage for modules of finite $\gc$-dimension. For a horizontally linked $R$--module $M$ of finite $\gc$-dimension, the connection of the Serre condition $(S_n)$ with the vanishing of certain relative cohomology modules of its linked module is discussed.
\end{abstract}
\section{Introduction}
The theory of linkage of algebraic varieties was introduced by
C. Peskine and L. Szpiro \cite{PS}. Recall that two ideals $\fa$ and $\fb$
in a Gorenstein local ring $R$ are said to be linked by a Gorenstein ideal $\fc$ if $\fc\subseteq\fa\cap\fb$  such that
$\fa=(\fc):\fb$ and $\fb=(\fc):\fa$. The first main theorem in
the theory of linkage was due to C. Peskine and L. Szpiro. They
proved that over a Gorenstein local ring $R$ with linked ideals
$\fa$ and $\fb$, $R/\fa$ is Cohen-Macaulay if and only if $R/\fb$
is. They also gave a counter-example to show that the above
statement is no longer true if the base ring is Cohen-Macaulay but
non-Gorenstein. Attempts to generalize this
theorem lead to several development in linkage theory, especially by
C. Huneke and B. Ulrich (\cite{Hu} and \cite{HuUl}). In \cite[Theorem 4.1]{Sc}, Schenzel proved that,
over a Gorenstein local ring $R$ with maximal ideal $\fm$, the Serre
condition $(S_r)$ for $R/{\fa}$ is equivalent to the vanishing of
the local cohomology groups $\hh^i_{\fm}(R/{\fb})= 0$ for all $i$,
$\dim(R/{\fb})-r<i<\dim(R/{\fb})$, provided $\fa$ an $\fb$ are
linked by a Gorenstein ideal $\fc$.

In \cite{MS}, Martsinkovsky and Strooker generalized the notion of
linkage for modules over non-commutative semiperfect Noetherian
rings. They introduced the operator $\lambda=\Omega\Tr$ and showed
that ideals $\fa$ and $\fb$ are linked by zero ideal if and only if
$R/\fa\cong\lambda (R/\fb)$ and $R/\fb\cong\lambda (R/\fa)$
\cite[Proposition 1]{MS}.

In \cite{DS} and \cite{DS1}, Dibaei and Sadeghi studied the theory of linkage for modules of finite $\gc$-dimension.
In this paper, we continue our study and obtain new results in this direction.

Now we describe the organization of the paper. In Section 2, we collect preliminary notions, definitions and some known results
which will be used in this paper.

In Section 3, for a horizontally linked $R$--module $M$, the connections of its invariants
$\gc$-dimension, depth and relative reduced grade with respect to a semidualizing module are studied. The associated prime ideals of
$\Ext^{\tiny{\rgric(\lambda M)}}_{\Ic}(\lambda M, R)$ is determined in terms of $M$, for any horizontally linked module $M$ (see Lemma \ref{l3}). As a consequence, it is shown that under certain conditions the depth of a horizontally linked module
is equal to the relative reduced grade of its linked module. For a horizontally linked $R$--module $M$ of finite $\gc$-dimension, the connection of the Serre condition $(S_n)$ with the vanishing of certain relative cohomology modules of its linked module is studied.
As a consequence, we obtain the following result for the linkage of ideals (see also Theorem \ref{t6} for more general case).
\begin{thm}\label{th1}
Let $(R,\fm)$ be a local ring, $C$ a semidualizing $R$--module and $\alpha$ a $\gc$-perfect ideal of $R$ of grade $m$. Set $S=R/\alpha$, $K=\Ext^{m}_R(S,C)$. Assume that the ideals $I$ and $J$ are linked by ideal $\alpha$ such that $K/JK$ satisfies $\widetilde{S}_1$. If $\id_{S_\fp}(K_\fp)<\infty$ for all $\fp\in X^{0}(S)$ then the following statements hold true.
\begin{enumerate}[(i)]
\item{$R/I$ is $\gc$-perfect if and only if $\Ext^i_{R/\alpha}(R/I,K)=0=\Ext^i_{\mathcal{I}_K}(R/J,R/\alpha)$ for all $i>0$.}
\item{If $\gkd(R/I)<\infty$, then $R/I$ satisfies $\widetilde{S}_n$ if and only if $\Ext^i_{\mathcal{I}_K}(R/J,R/\alpha)=0$ for all $i$, $0<i<n$.}
\item{If $\depth R/\alpha \geq2$, $(R/I)_\fp$ is $\gcp$-perfect for all $\fp\in\Spec(R)-\{\fm\}$, then $$\Ext^i_{\mathcal{I}_K}(R/J,R/\alpha)\cong\hh^i_{\fm}(R/I)$$ for all $i$, $0<i<\depth R/\alpha$.}
\end{enumerate}
\end{thm}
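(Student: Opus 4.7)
The plan is to reduce this ideal-theoretic statement to a direct application of the more general module-theoretic result (Theorem \ref{t6}), by transporting the hypotheses into the setting of horizontally linked modules over $S = R/\alpha$ with respect to the semidualizing $S$-module $K$.

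First I would set up the reduction. Since $\alpha$ is $\gc$-perfect of grade $m$, the standard theory of $\gc$-perfect ideals yields that $K = \Ext^m_R(S,C)$ is a semidualizing $S$-module and that $S \in \ac$. The hypothesis that $I$ and $J$ are linked by $\alpha$ unpacks to $I/\alpha = \ann_S(J/\alpha)$ and $J/\alpha = \ann_S(I/\alpha)$, so by \cite[Proposition 1]{MS} the cyclic $S$-modules $R/I$ and $R/J$ are horizontally linked over $S$; that is, each is isomorphic to the $\lambda = \Omega\Tr$ construction applied to the other, computed over $S$.

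Next I would apply Theorem \ref{t6} to the horizontally linked $S$-module $M = R/I$ relative to the semidualizing $S$-module $K$. The conditions $\id_{S_\fp}(K_\fp) < \infty$ for $\fp \in X^0(S)$ and $K/JK$ satisfies $\widetilde{S}_1$ are precisely what is needed to feed into the hypotheses of Theorem \ref{t6}. The three conclusions of that theorem then give $S$-module versions of (i), (ii), and (iii): the characterization of $\g_K$-perfection via vanishing of $\Ext^i_S(R/I,K)$ together with $\Ext^i_{\mathcal{I}_K}(R/J,S)$; the characterization of the $S$-module Serre condition $\widetilde{S}_n$ on $R/I$ via vanishing of $\Ext^i_{\mathcal{I}_K}(R/J,S)$ in the range $0 < i < n$; and the isomorphism of these relative cohomology modules with local cohomology of $R/I$ under the depth hypothesis.

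The final step is the translation from $S$-module statements to $R$-module statements. Here the key ingredients are (a) a change-of-rings comparison: since $\alpha$ is $\gc$-perfect, an $S$-module $N$ satisfies $\gkd_R(N) < \infty$ if and only if its $\g_K$-dimension over $S$ is finite, with $\gc$-perfection over $R$ matching $\g_K$-perfection over $S$, and analogously at the prime $\fp$ for part (iii); and (b) the invariants $\depth$, the Serre condition $\widetilde{S}_n$, and the local cohomology $\hh^i_{\fm}(R/I) = \hh^i_{\fm/\alpha}(R/I)$ are intrinsic to the module and unchanged under restriction along $R \twoheadrightarrow S$. The main obstacle I expect is precisely step (a): making sure that the change-of-rings behavior of $\gc$-dimension and of the relative Ext functors $\Ext^i_{\mathcal{I}_K}$ is compatible enough that the output of Theorem \ref{t6} can be read back verbatim as a statement about $\gc$-dimension, $\gc$-perfection, and relative cohomology over the ambient ring $R$.
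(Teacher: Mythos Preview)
Your approach matches the paper's: the proof there is literally ``clear by Lemma~\ref{l6} and Theorem~\ref{t6}''. Two clarifications are worth making: first, the bridge from the hypotheses ``$\id_{S_\fp}(K_\fp)<\infty$ on $X^0(S)$'' and ``$K/JK$ satisfies $\widetilde{S}_1$'' to the hypothesis $\underline{\Hom}_S(R/I,K)=0$ required by Theorem~\ref{t6} is exactly Lemma~\ref{l6} applied over $S$ with semidualizing module $K$ (note $K/JK\cong (R/J)\otimes_S K\cong \lambda_S(R/I)\otimes_S K$), so you should name that lemma rather than leave the step implicit; second, your ``final step'' of translating $S$-statements back to $R$-statements is unnecessary, since Theorem~\ref{t6} already phrases its conclusions in terms of $\gc$-perfection and $\gkd_R$ over $R$ --- the change-of-rings work via Theorem~\ref{G2} and Lemma~\ref{l1} is internal to its proof.
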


In Section 4, we study the theory of linkage for modules in the Auslander class. For a semidualizing $R$-module $C$ and a stable $R$--module $M$, it is shown that if $M\in\ac$ and $\id_{R_\fp}(C_{\fp}) <\infty$ for all $\fp\in\Spec R$ with $\depth R_{\fp}\leq n-2$, then $M$ is an $n$-th syzygy module if and only if $M$ is horizontally linked and $\Ext^i_R(\lambda M, C) =0$ for all $i$, $0 <i <n$(see Corollary \ref{cor7}).\\

In Section 5, the connection
of the Serre condition $(S_n)$ on an $R$-module of finite $\gc$-dimension with the vanishing of the local cohomology groups of
its linked module is discussed. Let $R$ be a Cohen-Macaulay local ring $R$ with canonical $\omega_R$ and $C$ a semidualizing $R$--module. For a module $M$ of finite $\gc$-dimension, it is shown that $M$ satisfies the $\widetilde{S}_n$ (i.e. $\depth_{R_{\fp}}(M_\fp)\geq\min\{n,\depth R_\fp\}$ for all $\fp\in\Supp_R(M)$) if and only if $\Ext^i_R(\trk M,\omega_R)=0$ for all $i$, $1\leq i\leq n$ (see Theorem \ref{t5}). As a consequence, we can generalize the Schenzel's result for modules of finite $\gc$-dimension. For a semidualizing module $C$ and a stable module $M$ over a Cohen-Macaulay local ring $R$, it is shown that if $\gkd_R(M)<\infty$ and $\underline{\Hom}_R(M,C)=0$ then $M$ satisfies $\widetilde{S}_n$ if and only if $M$ is horizontally linked and $\hh^i_\fm(\lambda M\otimes_RC)=0$ for all $i$, $\dim R-n<i<\dim R$. As an application of our study, we obtain the following result (see also Corollary \ref{cor9} for more general case).
\begin{thm}\label{th2}
Let $(R,\fm)$ be a Cohen-Macaulay local ring, $C$ a semidualizing $R$--module and $\fc$ a $\gc$-Gorenstein ideal of $R$. Assume that  $M$, $N$ two $R/\fc$--modules such  that $M\underset{\fc}{\thicksim}N$. If $\gkd_R(M)<\infty$, then the following statements hold true.
\begin{enumerate}[(a)]
\item{The following are equivalent:
\begin{enumerate}[(i)]
\item{$M$ is Cohen-Macaulay;}
\item{$N$ is Cohen-Macaulay;}
\item{$M$ satisfies $(S_n)$ for some $n>\depth(R/\fc)-\depth_{R/\fc}(N)$;}
\item{$N$ satisfies $(S_n)$ for some $n>\depth(R/\fc)-\depth_{R/\fc}(M)$.}
\end{enumerate}}
\item{If $\dim R/\fc\geq2$, then $M$ is generalized Cohen-Macaulay if and only if $N$ is. Moreover, if $M$ is generalized Cohen-Macaulay, then
$$\hh^i_\fm(M)\cong\Ext^i_{R/\fc}(N,R/\fc) \text{ for all } i, 0<i<d.$$ In particular, if $M$ is not maximal Cohen-Macaulay then $\depth_R(M)=\rgr_{R/\fc}(N)$.}
\end{enumerate}
\end{thm}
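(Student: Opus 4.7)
The plan is to recognize Theorem \ref{th2} as a specialization of the more general Theorem \ref{t6}/Corollary \ref{cor9} to the $\gc$-Gorenstein setting, in which the relative cohomology with respect to $K = \Ext^{\gr\fc}_R(R/\fc,C)$ simplifies dramatically. Setting $S = R/\fc$, the $\gc$-Gorenstein hypothesis yields that $K$ is free of rank one over $S$, i.e.\ $K\cong S$, and that $S$ is Cohen-Macaulay of dimension $\dim R - \gr\fc$. Under this identification, the relative cohomology $\Ext^i_{\mathcal{I}_K}(-,S)$ reduces to the ordinary $\Ext^i_S(-,S)$; the standard change-of-rings principle transfers $\gkd_R(M)<\infty$ to finiteness of the Gorenstein dimension of $M$ over $S$, so the $S$-linkage $M\underset{\fc}{\thicksim}N$ satisfies the hypotheses of the relevant Section~5 results.

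For part (a), I would apply Theorem \ref{t5} over $S$: $M$ is Cohen-Macaulay if and only if $M$ satisfies $\widetilde{S}_{\dim S}$, which in turn is equivalent to the vanishing of $\Ext^i_S(N,S)$ for all $1\leq i\leq\dim S$. By the symmetry of this vanishing criterion under horizontal linkage, it is further equivalent to $N$ being Cohen-Macaulay, giving (i)$\Leftrightarrow$(ii). The Serre-type conditions (iii) and (iv) reduce to these by combining the depth equality $\depth_S M = \rgr_S(N)$ (from Lemma \ref{l3} applied over $S$) with the numerical hypothesis $n>\depth S - \depth_S N$: together these force $M$ to satisfy the full Serre condition $\widetilde{S}_{\dim S}$.

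For part (b), I would again invoke Theorem \ref{t5}, this time in its generalized-Cohen-Macaulay form, together with the isomorphism $\Ext^i_S(N,S)\cong\hh^i_\fm(M)$ for $0<i<\dim S$ (obtained as in Theorem \ref{th1}(iii) after the $K\cong S$ collapse). This isomorphism immediately transfers the finiteness of intermediate local cohomologies between $M$ and $N$, giving the generalized Cohen-Macaulay equivalence as well as the displayed formula. The depth identity $\depth_R(M)=\rgr_S(N)$ then follows by identifying both sides with the smallest positive index $i$ for which $\hh^i_\fm(M)$ (equivalently $\Ext^i_S(N,S)$) is nonzero, which lies in the range $0<i<\dim S$ precisely when $M$ is not maximal Cohen-Macaulay over $S$.

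The principal obstacle is the careful verification of the collapse $K\cong S$ and the induced identification of relative and ordinary cohomology, which is what genuinely uses the $\gc$-Gorenstein hypothesis. A secondary subtlety is the asymmetric placement of the hypothesis $\gkd_R(M)<\infty$ only on $M$: drawing conclusions symmetric in $M$ and $N$ requires invoking the symmetry of horizontal linkage together with transfer results for the Auslander class from Section~2 once one works over $S$.
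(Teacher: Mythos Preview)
Your opening reduction is exactly the paper's proof: once $\fc$ is $\gc$-Gorenstein, $K=\Ext^{\gr\fc}_R(R/\fc,C)\cong R/\fc=S$, so the Auslander-class hypothesis $N\in\mathcal{A}_K$ of Corollary~\ref{cor9} is vacuous (every $S$-module lies in $\mathcal{A}_S$), and Theorem~\ref{th2} follows immediately from Corollary~\ref{cor9}. That one sentence is the whole argument in the paper; your first paragraph captures it.

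Your subsequent sketch of how to re-derive the content of Corollary~\ref{cor9} has two soft spots worth flagging. First, Theorem~\ref{t5} requires a canonical module for $S$, which you have not assumed; the result you actually want over $S$ with the trivial semidualizing $K=S$ is Proposition~\ref{t3}(ii) (or directly Theorem~\ref{cor6}), not Theorem~\ref{t5}. Second, your handling of (iii) and (iv) via ``the depth equality $\depth_S M=\rgr_S(N)$ from Lemma~\ref{l3}'' does not work as stated: Lemma~\ref{l3} and Proposition~\ref{th3} give that equality only under extra hypotheses (e.g.\ locally $\gc$-dimension zero on the punctured spectrum), not in general, and the natural alternative route through $\rgr_S(N)\leq\gd_S(N)=\depth S-\depth_S N$ would require $\gd_S(N)<\infty$, which is not part of the hypotheses. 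The paper's Theorem~\ref{cor6} avoids both issues by passing through local cohomology (Corollary~\ref{cor2}) rather than through a depth--reduced-grade identity. Since you already have Corollary~\ref{cor9} available, none of this re-derivation is needed, but if you intend to unwind it, use Theorem~\ref{cor6} and Theorem~\ref{t8} directly rather than Theorem~\ref{t5} and Lemma~\ref{l3}.
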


\section{Preliminaries}
Throughout the paper, $R$ is a commutative Noetherian ring and all
$R$--modules $M$, $N$, $\cdots$ are finite (i.e. finitely
generated). Whenever, $R$ is assumed local, its unique maximal ideal
is denoted by $\fm$.

For a finite presentation $P_1\overset{f}{\rightarrow}P_0\rightarrow
M\rightarrow 0$ of an $R$--module $M$, its transpose, $\Tr M$, is
defined as $\coker f^*$, where $(-)^* := \Hom_R(-,R)$, which
satisfies in the exact sequence
\begin{equation}\label{1.1}
0\rightarrow M^*\rightarrow P_0^*\rightarrow P_1^*\rightarrow \Tr
M\rightarrow 0.
\end{equation}
Moreover, $\Tr M$ is unique up to projective equivalence. Thus all
minimal projective presentations of $M$ represent isomorphic
transposes of $M$. The syzygy of a module $M$, denoted by $\Omega M$, is the kernel of
an epimorphism $P\overset{\alpha}{\rightarrow}M$, where $P$ is a
projective $R$--module, so that it is unique up to projective
equivalence. Thus $\Omega M$ is uniquely determined, up to
isomorphism, by a projective cover of $M$.

Martsinkovsky and Strooker  \cite{MS} generalized
the notion of linkage for modules over non-commutative semiperfect
Noetherian rings (i.e. finitely generated modules over such rings have projective covers). They introduced the operator $\lambda:=\Omega\Tr$ and showed that ideals $\fa$ and $\fb$ are linked by zero ideal if and only if $R/\fa\cong\lambda(R/\fb)$ and $R/\fb\cong\lambda(R/\fa)$\cite[Proposition1]{MS}.
\begin{dfn}\label{d1}\cite[Definition 3]{MS}
Let $R$ be a semiperfect ring. Two $R$--modules $M$ and $N$ are said
to be\emph{ horizontally linked} if $M\cong \lambda N$ and
$N\cong\lambda M$. Also, $M$ is called horizontally linked (to
$\lambda M$) if $M\cong\lambda^2M$.
\end{dfn}
Note that a commutative ring $R$ is semiperfect if and only if it is
a finite direct product of commutative local rings \cite[Theorem
23.11]{L}. A \emph{stable} module is a module with no non-zero
projective direct summands. Let $R$ be a semiperfect ring, $M$ a
stable $R$--module and $P_1\rightarrow P_0\rightarrow M\rightarrow
0$ a minimal projective presentation of $M$. Then $P_0^*\rightarrow
P_1^*\rightarrow \Tr M\rightarrow 0$ is a minimal projective
presentation of $\Tr M$ \cite[Theorem 32.13]{AF}. The following
induced exact sequences
\begin{equation}\tag{\ref{d1}.1}
0\longrightarrow M^*\longrightarrow P_0^*\longrightarrow \lambda
M\longrightarrow0,
\end{equation}
\begin{equation}\tag{\ref{d1}.2}
0\longrightarrow \lambda M\longrightarrow P_1^*\longrightarrow \Tr
M\longrightarrow0,
\end{equation}
which will be quoted in this paper.

An $R$--module $M$ is called a \emph{syzygy module} if it is
embedded in a projective $R$--module. Let $i$ be a positive integer,
an $R$--module $M$ is said to be an $i$th syzygy if there exists an
exact sequence
$$0\rightarrow M\rightarrow P_{i-1}\rightarrow\cdots\rightarrow P_0$$ with the $P_0,\cdots,P_{i-1}$ are
projective. By convention, every module is a $0$th syzygy.

Here is a characterization of a horizontally linked module.

\begin{thm}\cite[Theorem 2 and Proposition 3]{MS}\label{MS}
Let $R$ be a semiperfect ring. An $R$--module $M$ is horizontally
linked if and only if it is stable and $\Ext^1_R(\Tr M,R)=0$,
equivalently $M$ is stable and a syzygy module.
\end{thm}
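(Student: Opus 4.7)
The plan is to deduce both asserted equivalences from the Auslander fundamental four-term exact sequence
\[0 \longrightarrow \Ext^1_R(\Tr M, R) \longrightarrow M \overset{\sigma_M}{\longrightarrow} M^{**} \longrightarrow \Ext^2_R(\Tr M, R) \longrightarrow 0,\]
where $\sigma_M$ is the natural evaluation map. This sequence is obtained by applying $\Hom_R(-,R)$ twice to the minimal projective presentation $P_1 \to P_0 \to M \to 0$ and splicing the resulting short exact sequences $(\ref{d1}.1)$ and $(\ref{d1}.2)$ through $\lambda M$. The vanishing $\Ext^1_R(\Tr M, R) = 0$ is then exactly the injectivity of $\sigma_M$, i.e.\ that $M$ is \emph{torsionless}. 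The companion equivalence ``torsionless $\Leftrightarrow$ syzygy'' is classical: an injective $\sigma_M$ embeds $M$ in $M^{**}$, and $M^{**}$ itself embeds in a free module (dualize any surjection onto $M^*$), while conversely a submodule of a free module is torsionless by testing against coordinate functionals.

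For the principal equivalence ``horizontally linked $\Leftrightarrow$ stable $+$ syzygy'', the forward direction is immediate: if $M \cong \lambda^2 M = \Omega\Tr(\lambda M)$, then $M$ sits as the kernel of a projective cover; this makes $M$ a syzygy, and over the semiperfect ring the kernel of a projective cover has no nonzero projective direct summand, so $M$ is stable.

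The substantive direction is the converse. Assuming $M$ is stable and torsionless, I would compute $\lambda^2 M = \Omega\Tr(\lambda M)$ by dualizing twice. Pick a projective surjection $Q \twoheadrightarrow M^*$; combined with $(\ref{d1}.1)$ this yields a projective presentation $Q \to P_0^* \to \lambda M \to 0$. Dualizing produces
\[0 \longrightarrow (\lambda M)^* \longrightarrow P_0 \longrightarrow Q^* \longrightarrow \Tr(\lambda M) \longrightarrow 0,\]
in which the middle arrow factors canonically as $P_0 \twoheadrightarrow M \overset{\sigma_M}{\longrightarrow} M^{**} \hookrightarrow Q^*$ (the last inclusion being the dual of $Q \twoheadrightarrow M^*$). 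Injectivity of $\sigma_M$ identifies the image of $P_0 \to Q^*$ with $M$, so exactness at $Q^*$ gives $\ker(Q^* \to \Tr(\lambda M)) \cong M$. Passing from the surjection $Q^* \twoheadrightarrow \Tr(\lambda M)$ to its minimal projective cover (by peeling off a projective summand of $Q^*$) converts this into an isomorphism $M \cong \lambda^2 M \oplus P'$ with $P'$ projective. Because $M$ is stable, $P' = 0$, and $\lambda^2 M \cong M$.

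The main obstacle is tracking the projective summand $P'$ introduced by using a not-necessarily-minimal presentation of $\lambda M$: the transpose $\Tr(\lambda M)$, and with it $\lambda^2 M$, is defined only up to projective equivalence, and the naive computation delivers $\lambda^2 M$ only up to a projective direct summand. The semiperfect hypothesis is used twice to overcome this---first to guarantee that $\Omega$ (hence $\lambda$) lands in the class of stable modules, and second so that stability of $M$ forces $P' = 0$ in the identification $M \cong \lambda^2 M \oplus P'$.
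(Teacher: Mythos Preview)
The paper does not prove this theorem at all---it is quoted from \cite[Theorem~2 and Proposition~3]{MS} without argument---so there is no in-paper proof to compare against. I will therefore comment only on the correctness of your sketch.

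Your treatment of the converse direction (stable and torsionless $\Rightarrow$ horizontally linked) and of the auxiliary equivalence ``$\Ext^1_R(\Tr M,R)=0 \Leftrightarrow M$ is a syzygy'' is sound: dualizing the presentation $Q\to P_0^*\to\lambda M\to 0$ and using the factorization $P_0\to M\xrightarrow{\sigma_M}M^{**}\hookrightarrow Q^*$ does identify $M$ with $\lambda^2M$ up to a projective summand, and stability of $M$ then forces that summand to vanish.

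The forward direction, however, rests on a false assertion. You claim that ``over the semiperfect ring the kernel of a projective cover has no nonzero projective direct summand,'' and you repeat this at the end (``$\Omega$ \ldots\ lands in the class of stable modules''). This fails already for $R=k[[x]]$ and $N=R/(x)$: the projective cover $R\twoheadrightarrow R/(x)$ has kernel $(x)\cong R$, which is projective. The statement you have in mind is true over Artinian rings (where a nonzero projective cannot embed in $\operatorname{rad}P$), but semiperfect rings include arbitrary commutative local rings. So ``$M\cong\Omega\Tr(\lambda M)$ is a minimal syzygy, hence stable'' is not a valid inference here.

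The gap is easily repaired once the converse is in hand. Write $M=M'\oplus P$ with $M'$ stable and $P$ projective. Since $\lambda$ kills projective summands, $\lambda^2M=\lambda^2M'$, so $M'\oplus P\cong\lambda^2M'$. Now $M'$ is stable and, as a summand of the syzygy $M$, is itself a syzygy; by your (correct) converse direction, $M'\cong\lambda^2M'$. Hence $M'\oplus P\cong M'$, and Krull--Schmidt over the semiperfect ring forces $P=0$. Thus $M$ is stable, and the syzygy condition is immediate as you noted.
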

\begin{dfn}\label{d3}
An $R$--module $C$ is called a \emph{semidualizing} module, if the homothety morphism $R\rightarrow\Hom_R(C,C)$ is
an isomorphism and $\Ext^i_R(C,C)=0$ for all $i>0$.
\end{dfn}
Semidualizing modules are initially studied in \cite{F} and
\cite{G}. It is clear that $R$ itself is a semidualizing
$R$--module. Over a Cohen-Macaulay local ring $R$, a canonical
module $\omega_R$ of $R$ is a semidualizing module with finite
injective dimension.

Let $C$ be a semidualizing $R$--module, $M$ an $R$--module. Let
$P_1\overset{f}{\rightarrow}P_0\rightarrow M\rightarrow 0$ be a
projective presentation of $M$. The transpose of $M$ with respect to
$C$, $\trk M$, is defined to be $\coker f^{\triangledown}$, where
$(-)^{\triangledown} := \Hom_R(-,C)$, which satisfies the exact
sequence
\begin{equation}\tag{\ref{d3}.1}
0\rightarrow M^{\triangledown}\rightarrow
P_0^{\triangledown}\rightarrow P_1^{\triangledown}\rightarrow \trk
M\rightarrow 0.
\end{equation}
By \cite[Proposition 3.1]{F}, there exists the following exact
sequence
\begin{equation}\tag{\ref{d3}.2}
0\rightarrow\Ext^1_R(\trk M,C)\rightarrow M\rightarrow
M^{\triangledown\triangledown}\rightarrow\Ext^2_R(\trk
M,C)\rightarrow0.
\end{equation}

The Gorenstein dimension has been extended to $\gc$--dimension by
Foxby in \cite{F} and by Golod in \cite{G}.
\begin{dfn}
An $R$--module $M$ is said to have $\gc$--dimension zero if $M$ is
$C$-reflexive, i.e. the canonical map $M\rightarrow
M^{\triangledown\triangledown}$ is bijective and
$\Ext^i_R(M,C)=0=\Ext^i_R(M^{\triangledown},C)$ for all $i>0$.
\end{dfn}
A $\gc$-resolution of an $R$--module $M$ is a right acyclic complex
of $\gc$-dimension zero modules whose $0$th homology is $M$. The
module $M$ is said to have finite $\gc$-dimension, denoted by
$\gkd_R(M)$, if it has a $\gc$-resolution of finite length.

Note that, over a local ring $R$, a semidualizing $R$--module $C$ is
a canonical module if and only if $\gkd_R(M)<\infty$ for all
finitely generated $R$--modules $M$ (see \cite[Proposition
1.3]{Ge}).

In the following, we summarize some basic facts about $\gc$-dimension (see \cite{G} for more details).
\begin{thm}\label{G3}
For a semidualizing $R$--module $C$ and an $R$--module $M$, the following statements hold true.
\begin{enumerate}[(i)]
       \item{$\gkd_R(M)=0$ if and only if $\Ext^i_R(M,C)=0=\Ext^i_R(\trk M,C)$ for all $i>0$;}
       \item{$\gkd_R(M)=0$ if and only if $\gkd_R(\trk M)=0$;}
       \item{If $\gkd_R(M)<\infty$ then $\gkd_R(M)=\sup\{i\mid\Ext^i_R(M,C)\neq0, i\geq0\}$;}
        \item{If $R$ is local and $\gkd_R(M)<\infty$, then $\gkd_R(M)=\depth R-\depth_R(M)$.}
\end{enumerate}
\end{thm}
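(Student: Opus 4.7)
The plan is to derive Theorem \ref{th1} as a specialization of the general module-theoretic linkage result (Theorem \ref{t6}) over the quotient ring $S=R/\alpha$, using the fact that the hypothesis ``$\alpha$ is a $\gc$-perfect ideal of grade $m$'' is exactly what makes $K=\Ext^{m}_{R}(S,C)$ behave as a semidualizing module over $S$. Concretely, standard facts (Foxby/Golod, as summarized in Theorem \ref{G3}) give that $K$ is a semidualizing $S$-module and that a finite $R$-module $M$ annihilated by $\alpha$ satisfies $\gkd_{R}(M)<\infty$ if and only if $\g_{K}\text{-}\dim_{S}(M)<\infty$; moreover $M$ is $\gc$-perfect over $R$ iff it is $\g_{K}$-perfect over $S$, and the Serre condition $\widetilde{S}_n$ is preserved under this base change. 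Therefore every hypothesis of Theorem~\ref{th1} can be translated verbatim into the corresponding hypothesis of Theorem~\ref{t6} applied over $(S,K)$.

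Next I would translate the classical ideal-linkage into horizontal linkage of modules over $S$. By Martsinkovsky--Strooker (Theorem~\ref{MS}) and its $\gc$-version worked out in earlier parts of the paper, whenever $I$ and $J$ are linked by the $\gc$-perfect ideal $\alpha$, the $S$-modules $R/I$ and $R/J$ are horizontally linked with respect to $K$, i.e.\ $R/I\cong\lambda_{K}(R/J)$ and $R/J\cong\lambda_{K}(R/I)$ in the sense compatible with $\mathcal{I}_K$-relative homological algebra. The hypothesis that $K/JK$ satisfies $\widetilde{S}_1$ is the precise condition which guarantees that the horizontal linkage of $R/I$ and $R/J$ holds (it is the replacement, in the $K$-setting, for the syzygy condition in Theorem~\ref{MS}); the hypothesis $\id_{S_\fp}(K_\fp)<\infty$ for $\fp\in X^{0}(S)$ is what allows $\mathcal{I}_K$-cohomology to detect the Serre condition generically, since at such primes $K_\fp$ is dualizing for $S_\fp$.

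With these reductions in place, parts (i)--(iii) become direct applications of Theorem~\ref{t6}:
\begin{enumerate}[(i)]
\item By Theorem \ref{G3}(i) applied over $(S,K)$, $R/I$ is $\g_K$-perfect iff $\Ext^{i}_{S}(R/I,K)=0$ and $\Ext^{i}_{S}(\trk(R/I),K)=0$ for all $i>0$. Using the horizontal linkage sequences (\ref{d1}.1)--(\ref{d1}.2) applied to $R/I$ in the $K$-setting, the second family of vanishings is equivalent to $\Ext^{i}_{\mathcal{I}_K}(R/J,S)=0$ for all $i>0$. Combined with the $R$-to-$S$ transfer, this gives the claim.
\item This is Theorem~\ref{t6}(ii) applied over $(S,K)$ to the horizontally linked module $M=R/I$ of finite $\g_K$-dimension: the Serre condition $\widetilde{S}_n$ over $S$ is equivalent to the vanishing of the first $n-1$ relative cohomology modules of the linked module $\lambda_K M=R/J$.
\item Under the extra hypothesis that $(R/I)_\fp$ is $\gcp$-perfect for every $\fp\ne\fm$ and $\depth(S)\ge 2$, one obtains an isomorphism between relative $\mathcal{I}_K$-cohomology of the linked module and local cohomology of the original module. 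This is proved via a dimension shift using sequence (\ref{d1}.2): the generic perfectness kills the relevant $\Ext^{i}$ on $X^{0}(S)$, so the exact sequences collapse and give $\Ext^{i}_{\mathcal{I}_K}(R/J,S)\cong\hh^{i}_{\fm}(R/I)$ in the indicated range.
\end{enumerate}

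The main obstacle I anticipate is bookkeeping the base-change between $R$ and $S=R/\alpha$ for the relative homological functors $\Ext^{*}_{\mathcal{I}_K}$ and for the various Serre-type conditions; in particular, one must verify carefully that the $K/JK$ satisfies $\widetilde{S}_1$ hypothesis really does play the role of the syzygy condition in Theorem~\ref{MS}, so that the horizontal linkage of $R/I$ and $R/J$ over $S$ is genuinely in force. Once this is set up, the three conclusions follow from the corresponding items in Theorem~\ref{t6} by pure formalism.
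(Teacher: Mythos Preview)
Your proposal addresses the wrong statement. The theorem labeled \ref{G3} is a summary of standard facts about $\gc$-dimension (the characterization of $\gkd_R(M)=0$ via vanishing of $\Ext$, duality with $\trk$, the formula $\gkd_R(M)=\sup\{i\mid\Ext^i_R(M,C)\neq0\}$, and the Auslander--Bridger formula). In the paper this theorem is not proved at all; it is quoted from Golod \cite{G} with the introductory sentence ``In the following, we summarize some basic facts about $\gc$-dimension (see \cite{G} for more details).'' There is nothing to do here beyond citing Golod.

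What you have written is instead a sketch for Theorem~\ref{th1}. If that were the target, your outline is essentially on the right track and aligns with the paper: the paper's proof of Theorem~\ref{th1} is the one-line ``This is clear by Lemma~\ref{l6} and Theorem~\ref{t6}.'' The role you attribute to the hypothesis that $K/JK$ satisfies $\widetilde{S}_1$ together with $\id_{S_\fp}(K_\fp)<\infty$ on $X^0(S)$ is exactly Lemma~\ref{l6}: these conditions give $\underline{\Hom}_{S}(R/I,K)=0$, which is the standing hypothesis needed in Theorem~\ref{t6}. One clarification: you describe the $\widetilde{S}_1$ hypothesis as ensuring the horizontal linkage itself, but in the paper the linkage of $R/I$ and $R/J$ over $S$ is taken as given (ideals linked by $\alpha$ are horizontally linked as $S$-modules by \cite[Proposition~1]{MS}); Lemma~\ref{l6} is used only to obtain the vanishing of $\underline{\Hom}_{S}(R/I,K)$ so that Theorem~\ref{t6} applies.
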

\begin{dfn}\label{def1}
Let $C$ be a semidualizing $R$--module. The \emph{Auslander class
with respect to} $C$, denoted $\mathcal{A}_{C}$, consists of all
$R$--modules $M$ satisfying the following conditions.
\begin{itemize}
\item[(i)] The natural map $\mu:M\longrightarrow\Hom_R(C,M\otimes_RC)$ is an isomorphism;
\item[(ii)] $\Tor_i^R(M,C)=0=\Ext^i_R(C,M\otimes_RC)$ for all $i>0$.
\end{itemize}
Dually, The Bass class with respect to $C$, denoted $\bc$, consists of all $R$--modules
$M$ satisfying
\begin{itemize}
\item[(i)] The natural evaluation map $\mu:C\otimes_R\Hom_R(C,M)\longrightarrow M$ is an isomorphism;
\item[(ii)] $\Tor_i^R(\Hom_R(C,M),C)=0=\Ext^i_R(C,M)$ for all $i>0$.
\end{itemize}
\end{dfn}
In the following we collect some basic properties and examples of modules
in the Auslander class and Bass class with respect to a semidualizing module which
will be used in the rest of this paper.
\begin{eg}\label{example1}
\begin{enumerate}[(i)]
  \item{ If any two $R$-modules in a short exact sequence are
     in $\mathcal{A}_C$, respectively $\bc$, then so is the third one \cite[Lemma 1.3]{F}.
     Hence, every module of finite projective dimension
     is in the Auslander class $\mathcal{A}_C$. Also the class $\bc$, contains all modules of finite injective dimension.}
  \item{ Over a Cohen-Macaulay local ring $R$ with canonical module $\omega_R$,
        $M\in\mathcal{A}_{\omega_R}$ if and only if $\gd_R(M)<\infty$
        \cite[Theorem 1]{F1}.}
   \item{The $\mathcal{I}_C$-injective dimension of $M$, denoted
    $\mathcal{I}_{C}$-$\id_R(M)$, is less than or equal to $n$ if and
           only if there is an exact sequence
            $$0\rightarrow M\rightarrow\Hom_R(C,I^0)\rightarrow\cdots\rightarrow\Hom_R(C,I^n)\rightarrow0,$$
             such that each $I^i$ is an injective $R$--module \cite[Corollary 2.10]{TW}.
              Note that if $M$ has a finite $\mathcal{I}_C$-injective
              dimension, then $M\in\mathcal{A}_C$
               \cite[Corollary 2.9]{TW}.}
    \item{Let $R$ be a Cohen-Macaulay local ring with canonical module $\omega_R$ and let $C$ be a semidualizing $R$--module. It is well-known that $C^{\dagger}:=\Hom_R(C,\omega_R)$ is a semidualizing $R$--module. For an $R$--module $M$, $\gkd_R(M)<\infty$ if and only if $M\in\mathcal{A}_{C^{\dagger}}$\cite[Theorem 4.6]{HJ}.}
\end{enumerate}
\end{eg}
The class of $C$--injective modules is defined as follows
$$\Ic=\{\Hom_R(C,I)\mid I \text{ is injective}\}.$$
The class $\Ic$ is preenveloping. Then each $R$--module has an augmented proper
$\Ic$-coresolution, that is, an $R$-complex:
$$Y^{+}=0\rightarrow M\rightarrow Y_0\rightarrow Y_1\rightarrow\cdots$$
such that $\Hom_R(Y^{+},X)$ is exact for all $X\in\Ic$. The truncated complex
$$0\rightarrow Y_0\rightarrow Y_1\rightarrow\cdots$$
is a proper $\Ic$-coresolution of $M$.

\begin{dfn}\cite{TW}
Let $C$ be a semidualizing $R$--module, and let $M$ and $N$ be
$R$--modules. Let $J$ be a proper $\Ic$-coresolution of $N$. For each $i$, set
$$\Ext^i_{\Ic}(M,N):=\hh_{-i}(\Hom_R(M,J)).$$
\end{dfn}
\begin{thm}\label{t2}\cite[Theorem 4.1 and Corollary 4.2]{TW}
Let $C$ be a semidualizing $R$--module, and let $M$ and $N$ be $R$--modules.
There exists isomorphism
$$\Ext^i_{\Ic}(M,N)\cong\Ext^i_R(C\otimes_RM,C\otimes_RN)$$ for all $i\geq0$. Moreover, if $M$ and $N$ are in $\mathcal{A}_C$ then
$\Ext^i_{\Ic}(M,N)\cong\Ext^i_R(M,N)$ for all $i\geq0$.
\end{thm}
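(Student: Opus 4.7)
The plan is to establish the two displayed isomorphisms in turn, using the Hom--tensor adjunction and the Auslander--Bass correspondence. The key ingredient throughout is the natural adjunction
$$\Hom_R(A,\Hom_R(C,B))\cong\Hom_R(C\otimes_R A,B),$$
together with the Bass class identity: the counit $C\otimes_R\Hom_R(C,E)\to E$ is an isomorphism for every injective $E$, since every injective module lies in $\bc$.

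For the first isomorphism, I would begin with an injective resolution $0\to C\otimes_R N\to E^0\to E^1\to\cdots$ of $C\otimes_R N$ and apply $\Hom_R(C,-)$ to obtain a complex whose terms $J^k:=\Hom_R(C,E^k)$ lie in $\Ic$; this complex is exact because $\Ext^i_R(C,E^k)=0$ for $i>0$. Composing the unit $N\to\Hom_R(C,C\otimes_R N)$ with $\Hom_R(C,\iota)$, where $\iota\colon C\otimes_R N\hookrightarrow E^0$, produces an augmented complex $Y^+\colon 0\to N\to J^\bullet$. To check that $Y^+$ is a proper $\Ic$-coresolution of $N$, apply the adjunction: for any $X=\Hom_R(C,I)\in\Ic$, $\Hom_R(Y^+,X)\cong\Hom_R(C\otimes_R Y^+,I)$, so properness reduces to exactness of $C\otimes_R Y^+$. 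The counit identifies $C\otimes_R J^k\cong E^k$ naturally, and the triangle identity for the adjunction ensures that the tensored augmentation is exactly the original injection $C\otimes_R N\hookrightarrow E^0$; hence $C\otimes_R Y^+$ is the injective resolution we began with, and is exact. Using this particular proper coresolution to compute $\Ext^i_{\Ic}(M,N)$ and applying the adjunction termwise gives an isomorphism of complexes $\Hom_R(M,J^\bullet)\cong\Hom_R(C\otimes_R M,E^\bullet)$; taking cohomology yields $\Ext^i_{\Ic}(M,N)\cong\Ext^i_R(C\otimes_R M,C\otimes_R N)$.

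For the second isomorphism, assume $M,N\in\mathcal{A}_C$ and take a projective resolution $P_\bullet\to M$. The vanishing $\Tor_i^R(C,M)=0$ for $i>0$ ensures that $C\otimes_R P_\bullet\to C\otimes_R M$ is a resolution; each $P_k$ is a direct summand of a finite free module, so $C\otimes_R P_k$ is a summand of some $C^n$, and combining with $\Ext^i_R(C,C\otimes_R N)=0$ for $i>0$ (from $C\otimes_R N\in\bc$) forces $\Ext^i_R(C\otimes_R P_k,C\otimes_R N)=0$ for $i>0$. Therefore $C\otimes_R P_\bullet$ is $\Hom_R(-,C\otimes_R N)$-acyclic and computes $\Ext^i_R(C\otimes_R M,C\otimes_R N)$. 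The adjunction together with the Auslander-class identity $\Hom_R(C,C\otimes_R N)\cong N$ gives
$$\Hom_R(C\otimes_R P_\bullet,C\otimes_R N)\cong\Hom_R(P_\bullet,\Hom_R(C,C\otimes_R N))\cong\Hom_R(P_\bullet,N),$$
whose cohomology is $\Ext^i_R(M,N)$; chaining with the first isomorphism completes the argument.

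The main obstacle I foresee is the properness verification in the first step: the augmentation $N\to J^0$ need not be injective (it factors through $\Hom_R(C,C\otimes_R N)$, and the unit map is bijective only when $N\in\mathcal{A}_C$), so $Y^+$ may fail to be exact as a complex and properness cannot be read off directly. Reformulating properness through adjunction as exactness of $C\otimes_R Y^+$ converts the question into the exactness of an honest injective resolution, which is what makes the whole construction work.
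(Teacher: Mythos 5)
The paper itself gives no proof of this theorem: it is quoted from Takahashi and White [TW, Theorem 4.1 and Corollary 4.2], so there is no internal argument to compare with. Your proof is correct, and it essentially reconstructs the standard argument behind the cited result. The load-bearing points are exactly the right ones: injective modules lie in $\bc$ (Example \ref{example1}(i)), so the counit $C\otimes_R\Hom_R(C,E)\to E$ is an isomorphism for $E$ injective; applying $\Hom_R(C,-)$ to an injective resolution $0\to C\otimes_RN\to E^\bullet$ and augmenting via the unit gives a complex $Y^+$ with $C\otimes_RY^+$ identified, through the counit, naturality and the triangle identity, with the original resolution, so $\Hom_R(Y^+,\Hom_R(C,I))\cong\Hom_R(C\otimes_RY^+,I)$ is exact and $Y^+$ is a proper $\Ic$-coresolution; termwise adjunction then turns $\Hom_R(M,J^\bullet)$ into $\Hom_R(C\otimes_RM,E^\bullet)$, giving the first isomorphism. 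Your reduction of the second isomorphism to $\Tor_i^R(C,M)=0$, $\Ext^i_R(C,C\otimes_RN)=0$ and the unit isomorphism $N\cong\Hom_R(C,C\otimes_RN)$ for $M,N\in\mathcal{A}_C$ is likewise sound. Two minor remarks. Your parenthetical claim that the complex $\Hom_R(C,E^\bullet)$ is exact ``because $\Ext^i_R(C,E^k)=0$'' is false in general: its cohomology in positive degrees is $\Ext^i_R(C,C\otimes_RN)$, which vanishes only when $N$ satisfies the Auslander-class condition; fortunately nothing in your argument uses this exactness (only properness is needed, which you verify correctly, and your closing paragraph in effect withdraws the claim). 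Secondly, you tacitly use that $\Ext^i_{\Ic}(M,N)$ is independent of the chosen proper $\Ic$-coresolution; this is the standard comparison theorem for resolutions proper with respect to a preenveloping class and is implicit in the paper's definition, but it deserves a sentence. Note also that $\Ext^i_R(C,C\otimes_RN)=0$ is already condition (ii) in the definition of $N\in\mathcal{A}_C$, so the detour through $C\otimes_RN\in\bc$ is not needed there.
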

The \emph{relative reduced grade} of a module $M$ with respect to a semidualizing $C$ defined as follows
$$\rgric(M):=\inf\{i>0\mid\Ext^i_{\Ic}(M,R)\neq0\}.$$
We denote by $\rgr(M)$ the reduced grade of $M$ with respect to $R$. Note that if $M\in\ac$ then $\rgric(M)=\rgr(M)$ by Theorem \ref{t2}.

\begin{dfn}\label{def2}
Let $M$ and $N$ be finitely generated $R$--modules. Denote by
$\mathcal{\beta}(M, N)$ the set of $R$--homomorphisms of $M$ to $N$ which pass through free modules.
That is, an $R$-homomorphism $f:M\rightarrow N$ lies in $\mathcal{\beta}(M, N)$ if and only if it is factored as
$M\rightarrow F\rightarrow N$ with $F$ free. We denote
$$\underline{\Hom}_R(M,N)=\Hom_R(M,N)/\mathcal{\beta}(M,N).$$
By \cite[Lemma 3.9]{y}, there is a natural isomorphism
\begin{equation}\tag{\ref{def2}.1}
\underline{\Hom}_R(M,N)\cong\Tor_1^R(\Tr M,N).
\end{equation}
\end{dfn}

Throughout, $C$ is a semidualizing $R$--module and we denote $(-)^\triangledown$
as the dual functor $(-)^\triangledown:=\Hom_R(-,C)$. Two
$R$--modules $M$ and $N$ are said to be stably equivalent with
respect $C$, denoted $M \underset{C}{\approx} N$, if $C^n\oplus
M\cong C^m\oplus N$ for some non-negative integers $m$ and $n$. We
write $M \approx N$ when $M$ and $N$ are stably equivalent with
respect $R$. For $k>0$, set $\mathcal{T}_k^{C}:=\trk\Omega^{k-1}$.

\begin{rmk}\label{remark3}
\begin{enumerate}[(i)]
\item{Let $R$ be a semiperfect ring, $M$ an $R$--module. Assume that
$P_1\overset{f}{\rightarrow} P_0\rightarrow M\rightarrow0$ is the minimal
projective presentation of $M$. There exists a commutative diagram
$$\begin{CD}
&&&&&&&&\\
  \ \ &&&&  P^*_0\underset{R}{\otimes}C @>>>P^*_1\underset{R}{\otimes}C @>>> \Tr M\underset{R}{\otimes}C @>>>0&  \\
  &&&& @VV{\cong}V @VV{\cong}V \\
  \ \  &&&& P^{\triangledown}_0 @>f^{\triangledown}>> P^{\triangledown}_1 @>>>\trk M @>>>0&\\
\end{CD}$$\\
with exact rows. Therefore, $\Tr M\underset{R}{\otimes}C\cong\trk
M$.}
\item{Set $\lambda_{C}M:=\im f^{\triangledown}$. If $\underline{\Hom}_R(M,C)=0$, then $\Tor_1^R(\Tr M,C)=0$ by (\ref{def2}.1). Now it is easy to see that $\lambda_{C}M\cong\lambda M\otimes_R C$ and so we have the following isomorphisms:
\[\begin{array}{rl}
\Ext^i_{\Ic}(\lambda M,R)&\cong\Ext^i_R(\lambda M\otimes_RC,C)\\
&\cong\Ext^i_R(\lambda_C M,C)\\
&\cong\Ext^{i+1}_R(\trk M,C),\\
\end{array}\]
for all $i>0$, where the first isomorphism follows from Theorem \ref{t2}.}
\item{ For an $R$--module $M$ and positive integer $n$, there
are exact sequences
$$0\rightarrow\Ext^n_R(M,C)\rightarrow\mathcal{T}_n^{C}M\rightarrow L\rightarrow0,$$
$$0\rightarrow L\rightarrow \overset{m}{\oplus} C\rightarrow \mathcal{T}_{n+1}^CM\rightarrow0$$
}
\end{enumerate}
\end{rmk}
In \cite{HH}, C. Huang and Z. Huang introduced Gorenstein transpose of a module and investigated the
relations between the Gorenstein transpose and the transpose of the same module. In \cite{LY}, Liu and Yan extended
the notion of Gorenstein transpose to $C$-Gorenstein transpose as follows.\\

For a finite $\gc$-projective presentation $\pi: X_1\overset{f}{\rightarrow}X_0\rightarrow
M\rightarrow 0$ of an $R$--module $M$, its $C$-Gorenstein transpose, $\Tr^{\pi}_{\gc} M$, is
defined as $\coker f^{\triangledown}$, where $(-)^{\triangledown} := \Hom_R(-,C)$, which
satisfies in the exact sequence
\begin{equation}
0\rightarrow M^\triangledown\rightarrow X_0^\triangledown\rightarrow X_1^\triangledown\rightarrow \Tr^{\pi}_{\gc} M\rightarrow 0.
\end{equation}

\begin{lem}\label{l4}
For an $R$--module $M$, there exists the following exact sequence
$$0\rightarrow M\rightarrow\trk(\trk M)\rightarrow X\rightarrow0,$$
where $\gkd_R(X)=0$.
\end{lem}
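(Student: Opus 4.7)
The plan is to compare the projective presentation of $M$ (dualized to yield a $\gc$-projective presentation of $\trk M$) with a bona fide projective presentation of $\trk M$, and to extract the desired exact sequence from this comparison.

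Fix a projective presentation $P_1\xrightarrow{f}P_0\to M\to0$. Applying $(-)^{\triangledown}=\Hom_R(-,C)$ gives the exact sequence
$$0\to M^{\triangledown}\to P_0^{\triangledown}\xrightarrow{f^{\triangledown}}P_1^{\triangledown}\to\trk M\to0.$$
Each $P_i^{\triangledown}$ is a direct sum of copies of $C$ and hence has $\gkd_R(P_i^{\triangledown})=0$, so the tail $P_0^{\triangledown}\to P_1^{\triangledown}\to\trk M\to0$ is a $\gc$-projective presentation of $\trk M$ in the sense of Liu--Yan \cite{LY}. The associated $\gc$-Gorenstein transpose with respect to this presentation is
$$\coker\bigl(P_1^{\triangledown\triangledown}\to P_0^{\triangledown\triangledown}\bigr)\cong\coker(P_1\to P_0)\cong M,$$
using the identifications $P_i^{\triangledown\triangledown}\cong P_i$ that come from the semidualizing condition $R\cong\Hom_R(C,C)$.

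Now pick any projective presentation $Q_1\to Q_0\to\trk M\to0$, so that $\trk(\trk M)=\coker(Q_0^{\triangledown}\to Q_1^{\triangledown})$. Since $Q_\bullet$ and $P_\bullet^{\triangledown}$ both present $\trk M$, the projective comparison theorem lifts $\id_{\trk M}$ to a chain map $(Q_1\to Q_0)\to(P_0^{\triangledown}\to P_1^{\triangledown})$. Dualizing and again using $P_i^{\triangledown\triangledown}\cong P_i$ produces a chain map $(P_1\to P_0)\to(Q_0^{\triangledown}\to Q_1^{\triangledown})$, hence a natural homomorphism $\phi\colon M\to\trk(\trk M)$. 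The desired short exact sequence will be extracted from this chain map via a snake-lemma / mapping cone analysis, with $X:=\coker\phi$ appearing as a subquotient of the $Q_i^{\triangledown}$ and $P_j^{\triangledown}$.

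To conclude $\gkd_R(X)=0$ one uses that the class of modules of $\gc$-dim $0$ is closed under direct sums, direct summands, and extensions (the last via a routine dualization argument using $\Ext^i_R(C,C)=0$ for $i>0$, $C$-reflexivity, and the five-lemma). The main obstacle is simultaneously establishing the injectivity of $\phi$ and identifying $X$ concretely enough to apply these closure properties. The cleanest route is to invoke the result of Liu--Yan \cite{LY} asserting that any two $\gc$-Gorenstein transposes of the same module, computed from different $\gc$-projective presentations, differ by direct summands of $\gc$-dim $0$; applied to the projective presentation of $\trk M$ and the $\gc$-presentation $P_\bullet^{\triangledown}$ constructed above, this yields precisely the sought exact sequence with $X$ of $\gc$-dim zero.
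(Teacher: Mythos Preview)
Your approach is essentially the paper's: identify $M$ with the $C$-Gorenstein transpose $\Tr_{\gc}^{\pi}(\trk M)$ computed from the $\gc$-projective presentation $\pi\colon P_0^{\triangledown}\to P_1^{\triangledown}\to\trk M\to0$, then invoke \cite[Proposition~4.2]{LY}. The paper's proof consists of exactly your first paragraph together with that citation, without the intermediate discussion of the comparison map $\phi$.

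One small imprecision worth flagging: you phrase the Liu--Yan result as ``any two $\gc$-Gorenstein transposes differ by direct summands of $\gc$-dim $0$.'' Taken literally this would only give $M\oplus G\cong\trk(\trk M)\oplus G'$ with $G,G'$ of $\gc$-dimension zero, which does not directly yield an embedding $M\hookrightarrow\trk(\trk M)$ with $\gc$-dim-zero cokernel. What \cite[Proposition~4.2]{LY} actually provides (and what the paper cites it for) is the exact sequence $0\to\Tr_{\gc}^{\pi}N\to\trk N\to X\to0$ with $\gkd_R(X)=0$, comparing a $\gc$-Gorenstein transpose to the ordinary $C$-transpose; applied with $N=\trk M$ this is precisely the statement of the lemma.
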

\begin{proof}
Let $P_1\rightarrow P_0\rightarrow M\rightarrow0$ be a projective presentation of $M$. By applying the functor
$(-)^{\triangledown}= \Hom_R(-,C)$, on the projective presentation of $M$, we obtain a $\gc$-projective presentation of $\trk M$,
$\pi: (P_0)^{\triangledown}\rightarrow (P_1)^{\triangledown}\rightarrow \trk M\rightarrow0.$
By applying the functor $(-)^{\triangledown}$, on $\pi$ we obtain the following commutative diagram:
$$\begin{CD}
&&&&&&&&\\
  \ \ &&&&  P_1 @>>>P_0 @>>>  M @>>>0&  \\
  &&&& @VV{\cong}V @VV{\cong}V \\
  \ \  &&&& P^{\triangledown\triangledown}_1 @>>> P^{\triangledown\triangledown}_0 @>>>\Tr_{\gc}^{\pi}(\trk M) @>>>0&\\
\end{CD}$$\\
with exact rows. Therefore, $M\cong\Tr_{\gc}^{\pi}(\trk M)$. Now the assertion is clear by \cite[Proposition 4.2]{LY}.
\end{proof}
Let $M$ be an $R$--module. For a
generating set $\{f_1,f_2,\ldots,f_n\}$ of
$M^\triangledown=\Hom_R(M,C)$, denote $f:M\rightarrow C^n$ as the
map $(f_1,\ldots,f_n)$. It follows from (\ref{d3}.2) that $f$ is
injective if and only if $\Ext^1_R(\trk M,C)=0$. Note that when $f$
is injection, then there is an exact sequence
$$0\rightarrow M\overset{f}{\rightarrow} C^n\rightarrow N\rightarrow0,$$
where $N=\coker(f)$. It is easy to see that, in this situations, the
above exact sequence is dual exact with respect to
$(-)^\triangledown$ and so $\Ext^1_R(N,C)=0$. Such an exact sequence
is called a \emph{universal pushforward of $M$ with respect to $C$}.

\begin{dfn}\cite{M1}\label{S}
An $R$--module $M$ is said to satisfy the property $\widetilde{S}_k$
if $\depth_{R_\fp} (M_\fp) \geq  \min\{ k, \depth R_\fp\}$  for all
$\fp\in\Spec R$.
\end{dfn}
Note that, for a horizontally linked module $M$ over a
Cohen-Macaulay local ring $R$, the properties $\widetilde{S}_k$ and
$(S_k)$ are identical.

For a positive integer $n$, a module $M$ is called an $n$th
$C$-syzygy module if there is an exact sequence $0\rightarrow
M\rightarrow C_1\rightarrow C_2\rightarrow\ldots\rightarrow C_n$,
where $C_i\cong\oplus^{m_i}C$ for some $m_i$.

Let $X$ be a subset of $\Spec R$. An $R$--module $M$ is said to be
of finite $\gc$--dimension on $X$, if $\gcpd(M_{\fp})<\infty$ for
all $\fp\in X$. We denote $X^n(R):=\{\fp\in\Spec(R)\mid \depth
R_\fp\leq n\}$.

Recall that an $R$--module $M$ is $n$-torsion free if $\Ext^i_R(\Tr
M,R)=0$ for all $i$, $1\leq i\leq n$. In \cite[Theorem 4.25]{AB},
Auslander and Bridger proved that an $R$--module $M$ of finite
Gorenstein dimension is $n$-torsion free if and only if $M$
satisfies $\widetilde{S}_n$ (See also \cite[Theorem 42]{M1}). In \cite[Proposition 2.4]{DS1}, Dibaei and Sadeghi generalized this result as follows.
\begin{thm}\label{t1}
Let $M$ be an $R$--module. For a
positive integer $n$, consider the following statements.
\begin{itemize}
      \item[(i)]$\Ext^i_R(\trk M,C)=0$ for all $i$, $1\leq i\leq n$.
      \item[(ii)]$M$ is an $n$th $C$-syszygy module.
      \item[(iii)]$M$ satisfies $\widetilde{S}_n$.
\end{itemize}
Then the following implications hold true.
\begin{itemize}
       \item[(a)] (i)$\Rightarrow$(ii)$\Rightarrow$(iii).
       \item[(b)] If $M$ has finite $\gc$--dimension on $X^{n-1}(R)$, then (iii)$\Rightarrow$(i).
\end{itemize}
\end{thm}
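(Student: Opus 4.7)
The plan is to establish each of the three implications separately.

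For (i)$\Rightarrow$(ii), I would induct on $n$. The base case $n=1$ is immediate from the four-term exact sequence (\ref{d3}.2): the vanishing of $\Ext^1_R(\trk M,C)$ is equivalent to injectivity of the canonical map $M\to M^{\triangledown\triangledown}$, and a surjection $R^k\twoheadrightarrow M^{\triangledown}$ dualizes to an embedding $M^{\triangledown\triangledown}\hookrightarrow C^k$, exhibiting $M$ as a first $C$-syzygy. For the inductive step, take a universal pushforward $0\to M\to C^k\to N\to 0$ (available since $\Ext^1_R(\trk M,C)=0$) and verify the degree shift $\Ext^{i+1}_R(\trk M,C)\cong\Ext^i_R(\trk N,C)$ for $1\leq i\leq n-1$. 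This shift is obtained from the identification $\lambda_C M=\Omega(\trk M)$ read off from (\ref{d3}.1), combined with the long exact $\Hom_R(-,C)$ sequence applied to the pushforward. By the inductive hypothesis $N$ is an $(n-1)$-st $C$-syzygy, and splicing with the pushforward produces the required $n$-step embedding of $M$.

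For (ii)$\Rightarrow$(iii), I would localize at an arbitrary prime $\fp\in\Spec R$. Since $C_{\fp}$ is semidualizing for $R_{\fp}$ one has $\depth_{R_{\fp}}(C_{\fp})=\depth R_{\fp}$. Decompose $0\to M\to C_1\to\cdots\to C_n$ into short exact sequences $0\to L_{i-1}\to C_i\to L_i\to 0$ (with $L_0=M$) via successive images, and iterate the depth lemma: the estimate starts at $\depth_{R_{\fp}}L_{n-1,\fp}\geq\min\{\depth R_{\fp},1\}$, and peeling back raises the bound by $1$ at each step (capped at $\depth R_{\fp}$), ultimately giving $\depth_{R_{\fp}}M_{\fp}\geq\min\{n,\depth R_{\fp}\}$, which is exactly $\widetilde{S}_n$.

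For (iii)$\Rightarrow$(i) under the finite $\gc$-dimension hypothesis on $X^{n-1}(R)$, the plan is to show by induction on $i$, for $1\leq i\leq n$, that $\Ext^i_R(\trk M,C)=0$, arguing contrapositively: suppose $\fp\in\Ass_R(\Ext^i_R(\trk M,C))$. The base case $i=1$ is transparent because (\ref{d3}.2) realizes $\Ext^1_R(\trk M,C)$ as a submodule of $M$, so $\fp\in\Ass_R(M)$ and therefore $\depth_{R_{\fp}}(M_{\fp})=0$; the property $\widetilde{S}_n$ then forces $\depth R_{\fp}=0\leq n-1$. For $i\geq 2$, iterate using the degree shift $\Ext^{i+1}_R(\trk M,C)\cong\Ext^i_R(\lambda_C M,C)$ together with replacement of $M$ by a universal pushforward at each step, tracking that $\widetilde{S}$ and the finiteness hypothesis degrade in a controlled manner; the cascade confines $\fp$ to $X^{n-1}(R)$. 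Once $\depth R_{\fp}\leq n-1$ is established, the hypothesis gives $\gkd_{R_{\fp}}(M_{\fp})<\infty$; together with $\widetilde{S}_n$ forcing $\depth_{R_{\fp}}(M_{\fp})=\depth R_{\fp}$, the Auslander-Bridger equality (Theorem \ref{G3}(iv)) yields $\gkd_{R_{\fp}}(M_{\fp})=0$. Theorem \ref{G3}(i) then gives $\Ext^i_{R_{\fp}}(\trcp M_{\fp},C_{\fp})=0$, contradicting the assumption.

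The hardest part will be the inductive bookkeeping in (iii)$\Rightarrow$(i) for $i\geq 2$. While the $i=1$ step falls out of (\ref{d3}.2) directly, confining the associated primes of the higher $\Ext^i_R(\trk M,C)$ to $X^{i-1}(R)$ requires carefully controlling how the universal pushforward and the $\Omega$-$\trk$ shift interact with both the Serre condition $\widetilde{S}_n$ and the finite $\gc$-dimension assumption on $X^{n-1}(R)$. The cleanest formalization appears to pass through reducing, at each degree $i$, to the transparent base case applied to an auxiliary module obtained by iterated pushforward.
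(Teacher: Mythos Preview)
The paper does not actually prove this statement; it is quoted verbatim as \cite[Proposition~2.4]{DS1} with no argument given, so there is no in-paper proof to compare against. Your outline is the standard one and is essentially sound; only the bookkeeping in (iii)$\Rightarrow$(i) deserves a closer look.

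For (i)$\Rightarrow$(ii) and (ii)$\Rightarrow$(iii) your sketches are fine. For the degree shift in the pushforward step, note that dual-exactness of $0\to M\to C^k\to N\to 0$ yields (see \cite[Lemma~2.2]{DS1}, also used in the proof of Theorem~\ref{t5}) an exact sequence $0\to\trk N\to C^m\to\trk M\to 0$, from which $\Ext^{i+1}_R(\trk M,C)\cong\Ext^i_R(\trk N,C)$ for $i\geq 1$ follows immediately since $\Ext^{>0}_R(C,C)=0$.

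For (iii)$\Rightarrow$(i), it is cleaner to induct on $n$ rather than on $i$: after your base case kills $\Ext^1_R(\trk M,C)$, take the universal pushforward to $N$, check that $N$ satisfies $\widetilde{S}_{n-1}$ and has finite $\gc$-dimension on $X^{n-2}(R)$, apply the inductive hypothesis to $N$, and pull back via the degree shift above. The point your phrase ``degrade in a controlled manner'' must be made to cover is this: for $\fp\in X^{n-1}(R)$ the depth lemma applied to $0\to M_\fp\to C_\fp^k\to N_\fp\to 0$ only yields $\depth N_\fp\geq\depth R_\fp-1$, which is one short of $\widetilde{S}_{n-1}$. Here the finiteness hypothesis is genuinely needed: $\widetilde{S}_n$ plus $\gkd_{R_\fp}(M_\fp)<\infty$ force $\gkd_{R_\fp}(M_\fp)=0$ via Theorem~\ref{G3}(iv), whence $\gkd_{R_\fp}(N_\fp)\leq 1$; but $\Ext^1_R(N,C)=0$ is built into the pushforward, so Theorem~\ref{G3}(iii) gives $\gkd_{R_\fp}(N_\fp)=0$ and hence $\depth N_\fp=\depth R_\fp$. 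For $\fp\notin X^{n-1}(R)$ the plain depth lemma already suffices. This is exactly the step you flag as hardest, and once it is written out the rest is routine.
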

\begin{rmk}\label{rem2}
\begin{enumerate}[(i)]
\item{By \cite[Theorem 10.62]{R}, there is a third quadrant spectral sequence
$$\E^{p,q}_2=\Ext^p_R(\Tor_q^R(\Tr M,C),C)\Rightarrow\Ext^{p+q}_R(\Tr M,R).$$
Hence we obtain the following exact sequence $$0\rightarrow\Ext^1_R(\Tr M\otimes_RC,C)\rightarrow\Ext^1_R(\Tr M,R),$$
by \cite[Theorem 10.33]{R}. If $M$ is horizontally linked, then $\Ext^1_R(\Tr M,R)=0$. Therefore $\Ext^1_R(\trk M,C)\cong\Ext^1_R(\Tr M\otimes_RC,C)=0$.}
\item{If  $\Tr M\in\mathcal{A}_C$ then
$\Tor_i^R(\Tr M,C)=0$ for all $i>0$. Hence $\E^{p,q}_2=0$ for all
$q>0$. Therefore, the spectral sequence collapses on $p$-axis and so
$$\Ext^i_R(\trk M,C)\cong\Ext^i_R(\Tr M\otimes_RC,C)\cong\Ext^i_R(\Tr
M,R),$$ for all $i\geq0$.}
\end{enumerate}
\end{rmk}

\begin{lem}\cite[Lemma 2.11]{DS1}\label{lem2}
Let $R$ be a local ring, $n\geq 0$
an integer, and $M$ an $R$--module. If $M\in\mathcal{A}_C$, then the
following statements hold true.
\begin{enumerate}[(i)]
\item $\depth_R(M)=\depth_R(M\otimes_RC)$ and
$\dim_R(M)=\dim_R(M\otimes_RC)$;
\item $M$ satisfies $(S_n)$ if and only if $M\otimes_RC$ does;
\item $M$ is Cohen-Macaulay if and only if $M\otimes_RC$ is Cohen-Macaulay.
\end{enumerate}
\end{lem}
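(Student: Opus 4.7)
The plan is to handle the three statements in order, with (i) carrying the bulk of the work. First I would establish the dimension equality in (i) by showing $\ann_R(M)=\ann_R(M\otimes_R C)$. The inclusion $\ann_R(M)\subseteq\ann_R(M\otimes_R C)$ is immediate because multiplication by any $r\in\ann_R(M)$ annihilates every simple tensor $m\otimes c$. Conversely, if $r\in\ann_R(M\otimes_R C)$, then multiplication by $r$ on $\Hom_R(C,M\otimes_R C)$ is the zero map; but $M\in\mathcal{A}_C$ forces the canonical map $M\to\Hom_R(C,M\otimes_R C)$ to be an isomorphism, so $r\in\ann_R(M)$. Since both $M$ and $M\otimes_R C$ are finitely generated, $\dim_R(M)=\dim(R/\ann_R M)=\dim(R/\ann_R(M\otimes_R C))=\dim_R(M\otimes_R C)$.

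For the depth equality in (i), the key inputs are the Tor-vanishing $\Tor_i^R(M,C)=0$ for all $i>0$, which is part of the definition of $M\in\mathcal{A}_C$, together with the standard facts that a semidualizing $R$-module $C$ satisfies $\depth_R C=\depth R$ and $\gkd_R C=0$. I would then apply the Auslander-type depth formula for tensor products, valid whenever one factor has finite $\gc$-dimension and the relevant Tor groups vanish: this yields
\[
\depth_R(M\otimes_R C)=\depth_R M+\depth_R C-\depth R=\depth_R M.
\]
The main technical point is pinning down the correct version of the depth formula in the Auslander-class setting; once it is cited, the computation is immediate. An alternative route, avoiding the depth formula, is to use that $M\otimes_R C\in\bc$, so $C\otimes_R\Hom_R(C,M\otimes_R C)\cong M\otimes_R C$ with vanishing Tor, and then invoke the same formula on the $\Hom_R(C,M\otimes_R C)\cong M$ side.

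For parts (ii) and (iii), I would pass to localizations. The Auslander class is stable under localization, so $M_\fp\in\mathcal{A}_{C_\fp}$ for every $\fp\in\Spec R$, and $C_\fp$ remains a semidualizing $R_\fp$-module. Applying (i) at each prime yields $\depth_{R_\fp}(M_\fp)=\depth_{R_\fp}((M\otimes_R C)_\fp)$ and the analogous equality for dimensions. Moreover, the annihilator equality from the first paragraph forces $\Supp_R(M)=\Supp_R(M\otimes_R C)$, so the sets of primes over which the Serre condition must be tested coincide for the two modules. The equivalences in (ii) and (iii) then follow immediately: $(S_n)$ is the pointwise requirement that $\depth$ dominate $\min\{n,\dim\}$ locally, and the Cohen-Macaulay property is the equality $\depth_R(-)=\dim_R(-)$, and both are preserved by the equalities in (i).
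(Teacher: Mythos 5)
This lemma is not proved in the paper at all -- it is quoted verbatim from \cite[Lemma 2.11]{DS1} -- so your argument can only be judged on its own terms. The dimension half of (i) and the reductions of (ii) and (iii) to (i) are fine: the equality $\ann_R(M)=\ann_R(M\otimes_RC)$ does follow from the isomorphism $M\cong\Hom_R(C,M\otimes_RC)$ exactly as you say, the Auslander class and semidualizing property localize, and the resulting equality of supports makes the prime-by-prime comparison legitimate.

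The gap is in the depth equality, which is the heart of the lemma. You justify $\depth_R(M\otimes_RC)=\depth_R(M)+\depth_R(C)-\depth R$ by appeal to a depth formula said to be ``valid whenever one factor has finite $\gc$-dimension and the relevant Tor groups vanish.'' In the only way this can be applied here the distinguished factor must be $C$ itself (note that $M\in\mathcal{A}_C$ does \emph{not} give $\gkd_R(M)<\infty$; by Example \ref{example1}(iv) membership in $\mathcal{A}_C$ is tied to finite $\mathsf{G}_{C^{\dagger}}$-dimension, not $\gc$-dimension). But no such theorem is available: the classical depth formulas under Tor-independence require the distinguished factor to have finite projective dimension (Auslander), finite CI-dimension (Iyengar), or finite ordinary G-dimension (Araya--Yoshino), and a semidualizing module $C\not\cong R$ has infinite projective and infinite $\g$-dimension, so none of these applies (if $C\cong R$ the lemma is trivial). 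A ``$\gc$-dimension depth formula'' assuming only $\Tor_i^R(M,C)=0$ for $i>0$ would, in the present situation, produce $\depth_R(M\otimes_RC)=\depth_R(M)$ from the Tor-half of the Auslander-class condition alone, discarding exactly the Hom-half ($M\cong\Hom_R(C,M\otimes_RC)$ and $\Ext^{i}_R(C,M\otimes_RC)=0$ for $i>0$) that makes the statement true; the versions of such formulas that do exist in the literature require vanishing of \emph{relative} homology or Auslander-class hypotheses, so as written your key step rests on an unavailable citation. The repair uses the full hypothesis: since $\Ext^{i}_R(C,M\otimes_RC)=0$ for $i>0$ and $M\cong\Hom_R(C,M\otimes_RC)$, one has $\mathbf{R}\Hom_R(k,M)\simeq\mathbf{R}\Hom_R(k\otimes^{\mathbf{L}}_RC,\,M\otimes_RC)$, and $k\otimes^{\mathbf{L}}_RC$ is a complex of $k$-vector spaces with $H_0=C/\fm C\neq0$; comparing lowest nonvanishing cohomology gives
\begin{equation*}
\Ext^n_R(k,M)\cong\bigoplus_{i\geq0}\Ext^{n-i}_R(k,M\otimes_RC)^{\beta_i},
\end{equation*}
whence $\depth_R(M)=\depth_R(M\otimes_RC)$. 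Alternatively, simply invoke the Auslander-class depth identities themselves (Christensen's work on semidualizing complexes, or \cite[Lemma 2.11]{DS1}), but then the lemma is being cited rather than proved.
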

\section{Linkage of modules and relative cohomology}
In this section, for a horizontally linked $R$--module $M$, the connections of its invariants
$\gc$-dimension, depth and relative reduced grade with respect to a semidualizing module are studied.
In the following, we investigate the relation between the Serre condition $\widetilde{S}_n$ on a horizontally linked module with the vanishing of certain relative cohomology modules of its linked module.
\begin{prop}\label{t3}
Let $R$ be a semiperfect ring, $M$ a horizontally linked $R$--module and $\underline{\Hom}_R(M,C)=0$.
The following statements hold true:
\begin{enumerate}[(i)]
\item{$\gkd_R(M)=0$ if and only if $\Ext^i_R(M,C)=0=\Ext^i_{\Ic}(\lambda M,R)$ for all $i>0$;}
\item{If $\gcpd(M_\fp)<\infty$ for all $\fp\in X^{n-1}(R)$,
then $M$ satisfies $\widetilde{S}_n$ if and only if $\Ext^i_{\Ic}(\lambda M,R)=0$ for all $i$,
$0<i<n$.}
\end{enumerate}
\end{prop}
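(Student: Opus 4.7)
The plan is to reduce both assertions to statements about the vanishing of $\Ext^i_R(\trk M, C)$ and then invoke characterizations already on hand. The key bridge is the shift isomorphism
\[
\Ext^i_{\Ic}(\lambda M, R)\cong\Ext^{i+1}_R(\trk M, C)\qquad(i>0),
\]
supplied by Remark~\ref{remark3}(ii); it is exactly here that the hypothesis $\underline{\Hom}_R(M,C)=0$ is used, via the identification $\lambda_C M\cong\lambda M\otimes_R C$. A second essential input is that horizontal linkedness of $M$ forces $\Ext^1_R(\trk M, C)=0$ automatically, by Remark~\ref{rem2}(i).

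For (i), I would quote Theorem~\ref{G3}(i): the condition $\gkd_R(M)=0$ is equivalent to $\Ext^i_R(M,C)=0$ and $\Ext^i_R(\trk M, C)=0$ for all $i>0$. Horizontal linkedness already disposes of the $i=1$ case of the second vanishing, while the remaining cases $i\geq 2$ translate, through the shift isomorphism, to $\Ext^j_{\Ic}(\lambda M, R)=0$ for all $j>0$. Assembling these two facts gives the claimed biconditional.

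For (ii), the hypothesis $\gcpd(M_\fp)<\infty$ for $\fp\in X^{n-1}(R)$ is precisely what implication (b) of Theorem~\ref{t1} requires, so $M$ satisfies $\widetilde{S}_n$ iff $\Ext^i_R(\trk M, C)=0$ for $1\leq i\leq n$. Peeling off the $i=1$ case (free from horizontal linkedness) and converting the remaining range $2\leq i\leq n$ via the shift isomorphism reformulates this as $\Ext^j_{\Ic}(\lambda M, R)=0$ for $0<j<n$, as asserted. The argument is essentially index bookkeeping atop Theorem~\ref{G3}, Theorem~\ref{t1}, Remark~\ref{remark3}(ii), and Remark~\ref{rem2}(i); no serious obstacle appears, and the only delicate points are verifying that $\underline{\Hom}_R(M,C)=0$ really is required to invoke the shift isomorphism and that horizontal linkedness legitimately furnishes the $i=1$ vanishing for free.
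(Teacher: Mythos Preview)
Your proposal is correct and follows essentially the same route as the paper: both establish the shift isomorphism $\Ext^i_{\Ic}(\lambda M,R)\cong\Ext^{i+1}_R(\trk M,C)$ via Remark~\ref{remark3}(ii) (using $\underline{\Hom}_R(M,C)=0$), invoke Remark~\ref{rem2}(i) to obtain $\Ext^1_R(\trk M,C)=0$ from horizontal linkedness, and then reduce (i) and (ii) to Theorem~\ref{G3}(i) and Theorem~\ref{t1} respectively. There is no substantive difference in strategy or ingredients.
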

\begin{proof}
As $M$ is horizontally linked, $\Ext^1_R(\trk M,C)=0$ by Remark \ref{rem2}(i). We have the following isomorphism:
\begin{equation}\tag{\ref{t3}.1}
\Ext^i_{\Ic}(\lambda M,R)\cong\Ext^{i+1}_R(\trk M,C),
\end{equation}
for all $i>0$ by Remark \ref{remark3}(ii).

(i) By Theorem \ref{G3}(i), $\gkd_R(M)=0$ if and only if $\Ext^i_R(M,C)=0=\Ext^i_R(\trk M,C)$ for all $i>0$ and this is equivalent to say that
$\Ext^i_R(M,C)=0=\Ext^i_{\Ic}(\lambda M,R)$ for all $i>0$ by (\ref{t3}.1).

(ii) By Theorem \ref{t1}, $M$ satisfies $\widetilde{S}_n$
if and only if $\Ext^i_R(\trk M,C)=0$ for all $i$, $1\leq i\leq n$ and this is equivalent to say that
$\Ext^i_{\Ic}(\lambda M,R)=0$ for all $i$, $0<i<n$ by (\ref{t3}.1).
\end{proof}
\begin{cor}\label{cor10}
Let $R$ be a local ring and $M$ a horizontally linked $R$--module of finite $\gc$-dimension. If $\underline{\Hom}_R(M,C)=0$, then $\gkd_R(M)=0$ if and only if $\Ext^i_{\Ic}(\lambda M,R)=0$ for all $i$, $1\leq i\leq\depth(R)$.
\end{cor}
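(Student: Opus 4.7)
The plan is to invoke Proposition \ref{t3} as a two-sided implication, with part (i) handling the forward direction immediately and part (ii) supplying the converse through the Auslander--Bridger style formula of Theorem \ref{G3}(iv).

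For the forward direction, assume $\gkd_R(M)=0$. Since $\underline{\Hom}_R(M,C)=0$, Proposition \ref{t3}(i) yields at once that $\Ext^i_R(M,C)=0=\Ext^i_{\Ic}(\lambda M,R)$ for every $i>0$, which is strictly stronger than the required vanishing in the range $1\leq i\leq\depth R$.

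For the converse, set $d=\depth R$. The hypothesis that $\gkd_R(M)<\infty$ combined with the localization property of $\gc$-dimension gives $\gcpd(M_\fp)<\infty$ for every $\fp\in\Spec R$, so in particular for every $\fp\in X^{d}(R)$. Hence the technical hypothesis of Proposition \ref{t3}(ii) is met for $n=d+1$. Applying that proposition with $n=d+1$, the vanishing $\Ext^i_{\Ic}(\lambda M,R)=0$ for $0<i<d+1$ (equivalently, for $1\leq i\leq d$) translates to $M$ satisfying $\widetilde{S}_{d+1}$.

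Specializing the inequality $\depth_{R_\fp}(M_\fp)\geq\min\{d+1,\depth R_\fp\}$ to $\fp=\fm$ gives $\depth_R(M)\geq\min\{d+1,d\}=d$. Since $M$ has finite $\gc$-dimension, Theorem \ref{G3}(iv) forces $\gkd_R(M)=\depth R-\depth_R(M)=d-\depth_R(M)\leq 0$, so $\gkd_R(M)=0$. I do not anticipate a serious obstacle here; the only point requiring care is choosing the correct value $n=d+1$ (rather than $n=d$) so that the $\widetilde{S}_n$ condition applied at the maximal ideal pins down $\depth_R(M)$ to equal $\depth R$, which is precisely what the $\gc$-dimension formula needs.
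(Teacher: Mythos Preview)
Your argument is correct and follows essentially the same route as the paper, which simply records the corollary as an immediate consequence of Proposition~\ref{t3} and Theorem~\ref{G3}(iv). One small remark on exposition: your closing comment that $n=d+1$ is needed rather than $n=d$ is overly cautious, since $\widetilde{S}_d$ at $\fp=\fm$ already yields $\depth_R(M)\geq\min\{d,\depth R\}=d$; either choice of $n$ works, and the proof goes through unchanged.
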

\begin{proof}
This is an immediate consequence of Proposition \ref{t3} and Theorem \ref{G3}(iv).
\end{proof}
\begin{cor}
Let $R$ be a Cohen-Macaulay local ring with canonical module $\omega_R$ and $M$ a horizontally linked $R$--module. If $\underline{\Hom}_R(M,\omega_R)=0$, then $M$ is maximal Cohen-Macaulay if and only if
$\Ext^i_{{\mathcal{I}}_{\omega_R}}(\lambda M,R)=0$ for all $i$, $1\leq i\leq\dim (R)$.
\end{cor}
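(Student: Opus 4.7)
The plan is to obtain this corollary as a direct specialization of Corollary \ref{cor10} to the case $C=\omega_R$. First I would check that the hypotheses of Corollary \ref{cor10} are satisfied: the ring $R$ is local, $M$ is horizontally linked by assumption, $\underline{\Hom}_R(M,\omega_R)=0$ by assumption, and the finite $\gc$-dimension condition is automatic because, as recalled in the preliminaries, every finitely generated module over a Cohen-Macaulay local ring with canonical module has finite $\gomegad$ (this is the characterization of $\omega_R$ among semidualizing modules cited from \cite{Ge}).

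Next I would translate the two endpoints of Corollary \ref{cor10}, with $C=\omega_R$, into the statement of the corollary. On the cohomological side, nothing changes except that the index range $1\le i\le\depth(R)$ becomes $1\le i\le\dim(R)$, since $R$ is Cohen-Macaulay and therefore $\depth R=\dim R$. On the homological side, I would use Theorem \ref{G3}(iv): because $\gomegad_R(M)$ is finite,
\[
\gomegad_R(M)=\depth R-\depth_R(M)=\dim R-\depth_R(M).
\]
Hence $\gomegad_R(M)=0$ holds if and only if $\depth_R(M)=\dim R$, which is precisely the condition that $M$ be maximal Cohen-Macaulay.

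Putting these two observations together, Corollary \ref{cor10} applied with $C=\omega_R$ yields exactly the stated equivalence, so the proof is essentially a one-line appeal. There is no genuine obstacle here; the only thing to be careful about is recording the two small identifications, namely that over a CM local ring with canonical module the hypothesis $\gkd_R(M)<\infty$ is free and that $\gomegad_R(M)=0$ is synonymous with maximal Cohen-Macaulayness.
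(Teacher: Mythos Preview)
Your proposal is correct and matches the paper's approach exactly: the paper states this corollary immediately after Corollary~\ref{cor10} with no proof, so it is intended as the specialization $C=\omega_R$ you describe, using that every finitely generated module has finite $\gomegad$ over a Cohen--Macaulay local ring with canonical module, that $\depth R=\dim R$, and that $\gomegad_R(M)=0$ is equivalent to $M$ being maximal Cohen--Macaulay via Theorem~\ref{G3}(iv).
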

In the following, for a horizontally linked module
$M$ of finite and positive $\gc$-dimension, we express the associated primes of the
$\Ext^{\tiny{\rgric(\lambda M)}}_{\Ic}(\lambda M,R)$ in terms of $M$.
\begin{lem}\label{l3}
Let $R$ be a semiperfect ring and let $M$ be a horizontally linked module of finite and positive $\gc$-dimension. Set $n=\rgric(\lambda M)$. If $\underline{\Hom}_R(M,C)=0$ then
$$\Ass_R(\Ext^n_{\Ic}(\lambda M,R))=\{\fp\in\Spec(R)\mid \gcpd(M_\fp)\neq0, \depth_{R_\fp}(M_\fp)=n=\rgr_{\mathcal{I}_{C_\fp}}((\lambda M)_\fp)\}.$$
\end{lem}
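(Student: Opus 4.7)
The plan is to translate the problem into one about $\Ass_R\Ext^{n+1}_R(\trk M,C)$ via the isomorphism of Remark \ref{remark3}(ii), and then extract the three conditions on the right-hand side by combining the Auslander-Bridger formula (Theorem \ref{G3}(iv)) with the equivalence between the Serre condition $\widetilde{S}_{\bullet}$ and the vanishing of the relative Ext modules $\Ext^{i}_R(\trk M,C)$ (Theorem \ref{t1}).

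First I would record the global vanishing pattern. Since $M$ is horizontally linked, Remark \ref{rem2}(i) gives $\Ext^1_R(\trk M,C)=0$; combining this with $n=\rgric(\lambda M)$ and the identification $\Ext^i_{\Ic}(\lambda M,R)\cong\Ext^{i+1}_R(\trk M,C)$ for $i>0$ from Remark \ref{remark3}(ii), I obtain $\Ext^i_R(\trk M,C)=0$ for all $1\le i\le n$ and $\Ext^{n+1}_R(\trk M,C)\ne 0$. Theorem \ref{t1}(a) then yields that $M$ satisfies $\widetilde{S}_n$, and the original computation reduces to that of $\Ass_R\Ext^{n+1}_R(\trk M,C)$, which I would attack locally.

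For the containment $\subseteq$, let $\fp\in\Ass_R\Ext^{n+1}_R(\trk M,C)$, so that $\Ext^{n+1}_{R_\fp}(\trk M_\fp,C_\fp)$ is nonzero of depth zero. If $\gcpd(M_\fp)=0$, Theorem \ref{G3}(i),(ii) force every positive $\Ext^i_{R_\fp}(\trk M_\fp,C_\fp)$ to vanish, a contradiction; so $\gcpd(M_\fp)\ne 0$. Localising the vanishing/non-vanishing pattern of $\Ext^i_{\Ic}(\lambda M,R)$ through Remark \ref{remark3}(ii) yields $\rgr_{\mathcal{I}_{C_\fp}}((\lambda M)_\fp)=n$. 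The global $\widetilde{S}_n$ together with $\gcpd(M_\fp)\ge 1$ and the Auslander-Bridger formula Theorem \ref{G3}(iv) gives $\depth_{R_\fp}(M_\fp)\ge n$. For the upper bound $\depth_{R_\fp}(M_\fp)\le n$, I would argue by contradiction: if $\depth_{R_\fp}(M_\fp)\ge n+1$, I verify that $M_\fp$ satisfies $\widetilde{S}_{n+1}$ as an $R_\fp$-module and then apply Theorem \ref{t1}(b) (using that $M_\fp$ has finite $\gc_{C_\fp}$-dimension on all of $\Spec R_\fp$) to conclude $\Ext^{n+1}_{R_\fp}(\trk M_\fp,C_\fp)=0$, contradicting $\fp\in\Ass$. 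For the reverse containment $\supseteq$, given $\fp$ with the three listed properties, the condition on the reduced grade together with Remark \ref{remark3}(ii) and the horizontal linkage of $M$ produces $\Ext^i_{R_\fp}(\trk M_\fp,C_\fp)=0$ for $1\le i\le n$ and $\Ext^{n+1}_{R_\fp}(\trk M_\fp,C_\fp)\ne 0$; by Theorem \ref{G3}(iv), $\gcpd(M_\fp)=\depth R_\fp-n$. To upgrade the non-vanishing to $\fp R_\fp\in\Ass$, I would use the short exact sequence $0\to\lambda_CM_\fp\to C_\fp^{b_1}\to\trk M_\fp\to 0$ coming from Remark \ref{remark3}(ii) together with the shift $\Ext^{n+1}_R(\trk M,C)\cong\Ext^{n-1}_R(M^{\triangledown},C)$ obtained by iterating (\ref{d3}.2), and extract a socle element from the depth-zero slot forced by $\depth_{R_\fp}(M_\fp)=n$.

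The main obstacle I anticipate is the upper bound $\depth_{R_\fp}(M_\fp)\le n$ in the $\subseteq$ direction. The global $\widetilde{S}_n$ combined with $\depth_{R_\fp}(M_\fp)\ge n+1$ does not automatically upgrade to $\widetilde{S}_{n+1}$ for $M_\fp$ over $R_\fp$, since a proper subprime $\fq\subsetneq\fp$ with $\depth R_\fq\ge n+1$ could still have $\depth_{R_\fq}(M_\fq)=n$. Resolving this step requires exploiting that $\fp$ is \emph{associated}, not merely supported: my plan is to choose a socle element $x\in\Ext^{n+1}_{R_\fp}(\trk M_\fp,C_\fp)$ with $\ann_{R_\fp}x=\fp R_\fp$, transport it through the shift to an element of $\Ext^{n-1}_{R_\fp}(M_\fp^{\triangledown},C_\fp)$, and use an Ischebeck-type depth inequality to argue that $\depth_{R_\fp}(M_\fp)\ge n+1$ would force this element to be killed by a regular element on $M_\fp^{\triangledown}$, contradicting $\ann_{R_\fp}x=\fp R_\fp$.
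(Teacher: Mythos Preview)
Your translation to $\Ass_R\Ext^{n+1}_R(\trk M,C)$ via Remark \ref{remark3}(ii) and Remark \ref{rem2}(i) is correct, and for the inclusion $\subseteq$ you correctly obtain $\gcpd(M_\fp)\ne0$, $\rgr_{\mathcal{I}_{C_\fp}}((\lambda M)_\fp)=n$, and the lower bound $\depth_{R_\fp}(M_\fp)\ge n$. However, the two steps you yourself flag as uncertain --- the upper bound $\depth_{R_\fp}(M_\fp)\le n$ in $\subseteq$, and the upgrade from ``nonzero'' to ``depth zero'' in $\supseteq$ --- are genuine gaps, and your proposed Ischebeck-style workaround does not close them. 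There is no clear mechanism by which $\depth_{R_\fp}(M_\fp)\ge n+1$ forces a socle element of $\Ext^{n-1}_{R_\fp}(M_\fp^{\triangledown},C_\fp)$ to be killed by an $M_\fp^{\triangledown}$-regular element: the depth of $M_\fp$ does not control annihilators in $\Ext$ of $M_\fp^{\triangledown}$ in the way you suggest, and the shift you invoke also fails when $n=1$.

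The paper bypasses both difficulties by a direct depth chase through an explicit filtration. Set $N=\trk M$; since $\Ext^i_R(N,C)=0$ for $1\le i\le n$, applying $(-)^{\triangledown}$ to a projective resolution of $N$ yields an exact sequence
\[
0\to\trk N\to (P_2)^{\triangledown}\to\cdots\to(P_{n+1})^{\triangledown}\to\trc_{n+1}N\to 0,
\]
together with $0\to\Ext^{n+1}_R(N,C)\to\trc_{n+1}N\to L\to 0$ and $0\to L\to C^m\to\trc_{n+2}N\to 0$. The crucial input you are missing is Lemma \ref{l4}, which supplies $0\to M\to\trk N\to X\to 0$ with $\gkd_R(X)=0$; this links $\depth_{R_\fp}(M_\fp)$ directly to $\depth_{R_\fp}(\trk N)_\fp$ whenever $\depth R_\fp>n$. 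For $\subseteq$, membership in $\Ass$ gives $\depth_{R_\fp}(\trc_{n+1}N)_\fp=0$; counting depths back along the displayed resolution (each $(P_i)^{\triangledown}_\fp$ has depth $\depth R_\fp>n$) forces $\depth_{R_\fp}(\trk N)_\fp=n$, hence $\depth_{R_\fp}(M_\fp)=n$ via Lemma \ref{l4}. For $\supseteq$, run the same chase forward: $\depth_{R_\fp}(M_\fp)=n$ gives $\depth_{R_\fp}(\trk N)_\fp=n$, hence $\depth_{R_\fp}(\trc_{n+1}N)_\fp=0$; since $\depth_{R_\fp}(L_\fp)>0$ from the third sequence, the second sequence yields $\depth_{R_\fp}\Ext^{n+1}_R(N,C)_\fp=0$, i.e.\ $\fp\in\Ass$. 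This depth-counting through the $\trc_i$-towers, anchored by Lemma \ref{l4}, is the missing idea.
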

\begin{proof}
As $M$ is horizontally linked, $\Ext^1_R(\trk M,C)=0$ by Remark \ref{rem2}(i). We have the following isomorphism:
\begin{equation}\tag{\ref{l3}.1}
\Ext^i_{\Ic}(\lambda M,R)\cong\Ext^{i+1}_R(\trk M,C),
\end{equation}
for all $i>0$ by Remark \ref{remark3}(ii).
Set $N=\trk M$. Let $\cdots\rightarrow P_i\rightarrow\cdots\rightarrow
P_0\rightarrow N\rightarrow0$ be the minimal projective resolution
of $N$. As $n=\rgric(\lambda M)$, $\Ext^i_R(N,C)=0$ for all $i$, $1\leq i\leq n$ by (\ref{l3}.1). Hence by
applying functor
$(-)^{\triangledown}=\Hom_R(-,C)$ on the minimal projective
resolution of $N$, we obtain the following exact sequences:
\begin{equation}\tag{\ref{l3}.2}
0\rightarrow\trk N\rightarrow
(P_2)^{\triangledown}\rightarrow\cdots\rightarrow(P_{n+1})^{\triangledown}\rightarrow\trc_{n+1}N\rightarrow0,
\end{equation}
\begin{equation}\tag{\ref{l3}.3}
0\rightarrow\Ext^{n+1}_R(N,C)\rightarrow\trc_{n+1}N\rightarrow L\rightarrow0,
\end{equation}
\begin{equation}\tag{\ref{l3}.4}
0\rightarrow L\rightarrow
\overset{m}{\oplus}C\rightarrow\trc_{n+2}N\rightarrow0.
\end{equation}
Assume that $\fp\in\Ass_R(\Ext^{n}_{\Ic}(\lambda M,C))$. It is clear that $n=\rgr_{\mathcal{I}_{C_\fp}}((\lambda M)_\fp)$. By (\ref{l3}.1), $\fp\in\Ass_R(\Ext^{n+1}_{R}(N,C))$. It follows from the exact sequence (\ref{l3}.3) that $\depth_{R_\fp}(\trc_{n+1}N)_\fp=0$. If $\gcpd(M_\fp)=0$, then $\Ext^i_{R_\fp}(N_\fp,C_\fp)=0$ for all $i>0$ by Theorem \ref{G3}(i) which is a contradiction.
Hence $\gcpd(M_\fp)\neq0$. By Proposition \ref{t3}, $M$ satisfies $\widetilde{S}_n$ and so $\depth R_\fp >n$.
Note that $\depth_{R_\fp}(((P_i)^{\triangledown})_\fp)=\depth_{R_\fp}(C_\fp)=\depth
R_\fp$ for all $i$. By localizing the exact sequence (\ref{l3}.2)
at $\fp$, we conclude that $\depth_{R_\fp}((\trk N)_\fp)=n$. By Lemma \ref{l4}, we have the following exact sequence
\begin{equation}\tag{\ref{l3}.5}
0\rightarrow M\rightarrow\trk N\rightarrow X\rightarrow0,
\end{equation}
where $\gkd_R(X)=0$. By localizing the exact sequence (\ref{l3}.5)
at $\fp$, we conclude that $\depth_{R_\fp}(M_\fp)=n$.

Now assume that $\fp\in\Spec R$ such that $\gcpd(M_\fp)\neq0$ and
$\depth_R( M_\fp)=n=\rgr_{\mathcal{I}_{C_\fp}}((\lambda M)_\fp)$. It follows from Theorem \ref{G3}(iv) that $\depth R_\fp>n$. By localizing the exact sequence (\ref{l3}.5) at $\fp$, we conclude that $\depth_{R_\fp}((\trk N)_\fp)=n$. By localizing the exact sequence (\ref{l3}.2) at $\fp$, it is easy to see that $\depth_{R_\fp}((\trc_{n+1}N)_\fp)=0$. As
$\depth_{R_\fp}(C_\fp)=\depth R_\fp>0$, we conclude from the exact
sequence (\ref{l3}.4) that $\depth_{R_\fp}(L_\fp)>0$. It follows
from the exact sequence (\ref{l3}.3) that
$\depth_{R_\fp}(\Ext^{n+1}_R(N,C)_\fp)=0$. In other words,
$\fp\in\Ass_R(\Ext^{n+1}_R(N,C))=\Ass_R(\Ext^{n}_{\Ic}(\lambda M,C))$.
\end{proof}
\begin{prop}\label{p1}
Let $R$ be a semiperfect ring and let $M$ be a horizontally linked $R$--module of finite $\gc$-dimension. If $\underline{\Hom}_R(M,C)=0$, then
$$\rgric(\lambda M)=\inf\{\depth_{R_\fp}(M_\fp)\mid \fp\in\Spec(R), \gcpd(M_\fp)\neq0\}.$$
\end{prop}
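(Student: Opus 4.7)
The plan is to set $n := \rgric(\lambda M)$ and show the two inequalities $n \le m$ and $n \ge m$, where $m$ denotes the infimum on the right-hand side (with the convention $\inf\emptyset = \infty$).

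First I would dispose of the case $n < \infty$. Since $\Ext^n_{\Ic}(\lambda M, R) \neq 0$, the set $\Ass_R(\Ext^n_{\Ic}(\lambda M,R))$ is non-empty, so Lemma \ref{l3} produces a prime $\fp$ with $\gcpd(M_\fp)\neq 0$ and $\depth_{R_\fp}(M_\fp) = n$. This immediately yields $m \le n$. For the reverse inequality, note that by definition of $n$ we have $\Ext^i_{\Ic}(\lambda M,R) = 0$ for $0 < i < n$; since $\gkd_R(M) < \infty$ implies that $\gcpd(M_\fp) < \infty$ for every $\fp$, Proposition \ref{t3}(ii) applies and $M$ satisfies $\widetilde{S}_n$. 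Now take any prime $\fp$ with $\gcpd(M_\fp)\neq 0$. By the Auslander--Bridger-type equality of Theorem \ref{G3}(iv), $\depth_{R_\fp}(M_\fp) < \depth R_\fp$, which combined with $\depth_{R_\fp}(M_\fp) \ge \min\{n, \depth R_\fp\}$ forces $\depth R_\fp > n$ and hence $\depth_{R_\fp}(M_\fp) \ge n$. Taking the infimum gives $m \ge n$.

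Next I would handle the remaining case $n = \infty$. Here $\Ext^i_{\Ic}(\lambda M, R) = 0$ for all $i > 0$, so by Proposition \ref{t3}(ii), $M$ satisfies $\widetilde{S}_k$ for every $k$, meaning $\depth_{R_\fp}(M_\fp) \ge \depth R_\fp$ for every $\fp \in \Spec R$. Because $\gcpd(M_\fp)$ is always finite, Theorem \ref{G3}(iv) then forces $\gcpd(M_\fp) = 0$ for every $\fp$. Hence the set on the right is empty and $m = \infty = n$.

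I do not expect any serious obstacle: the two main ingredients, Lemma \ref{l3} (for the upper bound and associated-prime input) and Proposition \ref{t3}(ii) together with the depth formula in Theorem \ref{G3}(iv) (for the lower bound), fit together immediately. The only subtle point, and where I would be careful, is matching the inequality $\depth_{R_\fp}(M_\fp) \ge \min\{n,\depth R_\fp\}$ from $\widetilde{S}_n$ against $\depth_{R_\fp}(M_\fp) < \depth R_\fp$ from the hypothesis $\gcpd(M_\fp) \neq 0$ to correctly conclude $\depth_{R_\fp}(M_\fp) \ge n$ rather than just $\ge \depth R_\fp$.
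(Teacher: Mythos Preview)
Your proposal is correct and follows essentially the same route as the paper: both use Proposition~\ref{t3}(ii) together with Theorem~\ref{G3}(iv) to obtain the lower bound $n \le m$, and Lemma~\ref{l3} (via an associated prime of $\Ext^n_{\Ic}(\lambda M,R)$) to obtain the upper bound. The only cosmetic difference is the case split: the paper separates the trivial case by assuming $\gkd_R(M)>0$ (invoking Proposition~\ref{t3}(i)), whereas you split on whether $n$ is finite; these amount to the same thing, and your treatment of $n=\infty$ is in fact a bit more explicit.
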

\begin{proof}
By Proposition \ref{t3}(i), we may assume that $\gkd_R(M)>0$. Set $n=\rgric(\lambda M)$. By Proposition
\ref{t3}(ii), $M$ satisfies $\widetilde{S}_{n}$. Hence,
\begin{equation}\tag{\ref{p1}.1}
\depth_{R_\fp}( M_\fp)\geq\min\{\depth R_\fp,n\}
\end{equation}
for all $\fp\in\Spec R$. Now let $\fp\in\Spec R$ with $\gcpd(M_\fp)\neq0$. It follows from (\ref{p1}.1) and Theorem \ref{G3}(iv) that $\depth R_\fp>n$.
Therefore, we have
$n\leq\inf\{\depth_{R_\fp}(M_\fp)\mid\fp\in\Spec R,
 \gd_{R_\fp}(M_\fp)\neq0\}.$

On the other hand, by the Lemma \ref{l3}, if
$\fp\in\Ass_R(\Ext^n_{\Ic}(\lambda M,R))$ then $\gcpd(M_\fp)\neq0$ and
 $\depth_{R_\fp}(M_\fp)=n$ and so the assertion holds.
\end{proof}
The reduced grade of a module $M$ with respect to a semidualizing $C$ defined as follows
$$\rgr(M,C):=\inf\{i>0\mid\Ext^i_{R}(M,C)\neq0\}.$$
If $\Ext^i_R(M,C)=0$ for all $i>0$, then $\rgr(M,C)=+\infty$.
Note that if $M$ has a finite and positive $\gc$-dimension, then
$\rgr(M,C)\leq\gkd_R(M)$ by Theorem \ref{G3}(iii).
For a subset $X$ of $\Spec R$, we say that $M$ is of $\gc$-dimension
zero on $X$, if $\gcpd(M_\fp)=0$ for all $\fp$ in $X$.
The following result is a generalization of \cite[Proposition 4.14]{DS1}
\begin{prop}\label{p2}
Let $R$ be a semiperfect ring and let $M$ be a horizontally linked $R$--module of finite and positive $\gc$-dimension such that $\underline{\Hom}_R(M,C)=0$. Set $n=\rgr(M,C)+\rgric(\lambda M)$. Then $M$ is of $\gc$-dimension zero on $X^{n-1}(R)$.
\end{prop}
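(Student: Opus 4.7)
The plan is to argue by contradiction. Set $r := \rgric(\lambda M)$ and $s := \rgr(M,C)$, so that $n = r+s$. First I would pick $\fp \in X^{n-1}(R)$ and suppose for contradiction that $\gcpd(M_\fp) > 0$. Since finite $\gc$-dimension localizes, parts (iii) and (iv) of Theorem \ref{G3} are available at $R_\fp$, and Proposition \ref{p1} can also be applied at $\fp$.

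The two key inputs then pull in opposite directions. Proposition \ref{p1} gives $\depth_{R_\fp}(M_\fp) \geq r$, since by hypothesis $\gcpd(M_\fp)\neq 0$. On the other hand, $\fp \in X^{n-1}(R)$ forces $\depth R_\fp \leq r+s-1$. Substituting both into the Auslander--Bridger equality of Theorem \ref{G3}(iv) yields
\[
\gcpd(M_\fp) \;=\; \depth R_\fp - \depth_{R_\fp}(M_\fp) \;\leq\; s-1.
\]

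From here the contradiction is essentially immediate. By Theorem \ref{G3}(iii), $\gcpd(M_\fp)$ is the largest $i$ for which $\Ext^i_{R_\fp}(M_\fp,C_\fp)$ is non-zero, so there must exist some $i$ with $0 < i \leq s-1$ and $\Ext^i_{R_\fp}(M_\fp,C_\fp) \neq 0$. But $\rgr(M,C) = s$ means $\Ext^i_R(M,C) = 0$ for all $0 < i < s$, and localization commutes with $\Ext$ on finitely generated modules, so $\Ext^i_{R_\fp}(M_\fp,C_\fp) = 0$ throughout that range, contradicting the existence of such an $i$. I do not foresee any real difficulty here: the substance of the argument has already been absorbed into Proposition \ref{p1}, which translates the relative reduced grade of $\lambda M$ into a statement about local depths of $M$, and once that translation is available the result reduces to a one-line comparison of invariants via the Auslander--Bridger formula.
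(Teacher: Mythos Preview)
Your argument is correct and follows essentially the same strategy as the paper: argue by contradiction, bound $\depth_{R_\fp}(M_\fp)$ from below using the relative reduced grade of $\lambda M$, and then combine the Auslander--Bridger formula with the definition of $\rgr(M,C)$ to reach a contradiction. The only minor difference is that the paper invokes Proposition~\ref{t3}(ii) (the $\widetilde{S}_m$ condition) and handles a short case split on whether $\depth R_\fp \le m$, whereas you invoke the slightly sharper Proposition~\ref{p1} directly and thereby avoid the split.
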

\begin{proof}
Set $m=\rgric(\lambda M)$. Assume contrarily that $\gcpd(M_\fp)\neq0$
for some $\fp\in X^{n-1}(R)$. By Proposition \ref{t3}(ii), $M$ satisfies $\widetilde{S}_m$. Hence,
\begin{equation}\tag{\ref{p2}.1}
\depth_{R_\fp}(M_\fp)\geq \min\{\depth R_\fp,m\}
\end{equation}
for all $\fp\in\Spec R$. If $\depth R_\fp\leq m$, then
$\gcpd(M_\fp)=\depth R_\fp-\depth_{R_\fp}(M_\fp)=0$ by (\ref{p2}.1), which is a
contradiction. If $\depth R_\fp>m$, then $\depth_{R_\fp}(M_\fp)\geq
m$ by (\ref{p2}.1). Therefore,
$$n-m\leq\rgr(M_\fp,C_\fp)\leq\gcpd(M_\fp)=\depth R_\fp-\depth_{R_\fp}(M_\fp)\leq n-m-1,$$
which is a contradiction.
\end{proof}
For an $R$--module $M$, set
$$\ng(M)=\{\fp\in\Spec(R)\mid\gcpd(M_{\fp})\neq0\}.$$  The following result is a
generalization of \cite[Theorem 2.7]{DS}.
\begin{prop}\label{th3}
Let $R$ be a local ring, $M$ a horizontally linked $R$--module with
$0<\gkd_R(M)<\infty$. If $\underline{\Hom}_R(M,C)=0$ then the following conditions are equivalent.
\begin{enumerate}[(i)]
          \item {$\depth_R(M)=\rgric(\lambda M)$;}
          \item {$\fm\in\Ass_R(\Ext^{\tiny{\rgric(\lambda M)}}_{\Ic}(\lambda M,R))$;}
          \item {$\depth_R(M)\leq\depth_{R_{\fp}}(M_{\fp})$, for each $\fp\in\ng(M)$}.
\end{enumerate}
\end{prop}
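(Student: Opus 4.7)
Set $n=\rgric(\lambda M)$. The plan is to run the implications (i)$\Rightarrow$(ii)$\Rightarrow$(iii)$\Rightarrow$(i) as a direct bookkeeping exercise using Lemma \ref{l3} and Proposition \ref{p1}. The key preliminary observation is that, since $0<\gkd_R(M)<\infty$ and $R$ is local, we have $\fm\in\ng(M)$, so the maximal ideal actually appears in the indexing set of Proposition \ref{p1}.

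For (i)$\Rightarrow$(ii), I would check the three membership conditions in the description of $\Ass_R(\Ext^n_{\Ic}(\lambda M,R))$ supplied by Lemma \ref{l3}. Namely, $\fm\in\ng(M)$ is automatic from $\gkd_R(M)>0$, the equality $\depth_{R_\fm}(M_\fm)=n$ is just the hypothesis (i) since $R$ is local, and $\rgr_{\mathcal{I}_{C_\fm}}((\lambda M)_\fm)=n$ holds trivially because localizing at $\fm$ does not change anything.

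For (ii)$\Rightarrow$(iii), Lemma \ref{l3} immediately gives $\depth_R(M)=n$. Then Proposition \ref{p1} yields
\[
\depth_R(M)=n=\inf\{\depth_{R_\fp}(M_\fp)\mid\fp\in\ng(M)\},
\]
from which (iii) follows at once.

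For (iii)$\Rightarrow$(i), Proposition \ref{p1} gives $n\le\depth_{R_\fm}(M_\fm)=\depth_R(M)$ since $\fm\in\ng(M)$. In the other direction, (iii) combined with Proposition \ref{p1} gives
\[
\depth_R(M)\le\inf\{\depth_{R_\fp}(M_\fp)\mid\fp\in\ng(M)\}=n,
\]
and the two inequalities combine to (i). I do not expect any substantive obstacle here; the only point that requires a moment of care is confirming that $\fm$ lies in $\ng(M)$ so that the infimum in Proposition \ref{p1} is actually realized at $\fm$, and that all hypotheses of Lemma \ref{l3} (including the reduced-grade condition) hold automatically at the maximal ideal by locality.
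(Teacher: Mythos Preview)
Your proposal is correct and follows essentially the same route as the paper, with only a cosmetic difference in packaging. Both arguments hinge on Lemma~\ref{l3}; where the paper invokes Proposition~\ref{t3}(ii) directly to get the $\widetilde{S}_t$ inequality $\depth_{R_\fp}(M_\fp)\geq\min\{t,\depth R_\fp\}$ and then reads off the needed bounds, you instead cite Proposition~\ref{p1} (the infimum formula), which is itself a corollary of those same two ingredients. Your version is arguably tidier since it reuses an already-established result rather than reproving a piece of it, but the underlying content is identical.
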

\begin{proof}
Set $t=\rgr_{\Ic}(\lambda M)$. By Proposition \ref{t3}(ii), $M$ satisfies $\widetilde{S}_t$. Hence,
\begin{equation}\tag{\ref{th3}.1}
\depth_{R_\fp}(M_\fp)\geq\min\{t, \depth R_\fp\}
\end{equation}
for all $\fp\in\Spec R$. Therefore  $t<\depth R_\fp$ for all $\fp\in\ng(M)$.

$(i)\Rightarrow (ii)$. This follows from Lemma \ref{l3}.

$(ii)\Rightarrow (iii)$. Let $\fp\in\ng(M)$. By Lemma \ref{l3} and (\ref{th3}.1),
$$\depth_R(M)=\rgr_{\Ic}(\lambda M)\leq\depth_{R_\fp}(M_\fp).$$

$(iii)\Rightarrow (i)$. By (\ref{th3}.1), $\depth_R(M)\geq t$. On the other hand, if $\fp\in\Ass_R(\Ext^{t}_{\Ic}(\lambda M,R))$ then $\fp\in\ng(M)$, $\depth_{R_\fp}(M_\fp)=t$ by Lemma \ref{l3}. Hence $\depth_R(M)\leq t$.
\end{proof}
\begin{thm}\label{t4}
Let $(R,\fm)$ be a local ring of depth $d\geq2$ and let $M$ be a horizontally linked $R$--module such that $\underline{\Hom}_R(M,C)=0$. The following statements hold true.
\begin{enumerate}[(i)]
\item{If $\gcpd(M_\fp)=0$ for all $\fp\in\Spec(R)-\{\fm\}$ then $\ell(\Ext^i_{\Ic}(\lambda M,R))<\infty$ for all $i>0$. The converse is true if $\gkd_R(M)<\infty$;}
\item{If $\gcpd(M_\fp)=0$ for all $\fp\in\Spec(R)-\{\fm\}$, then
$$\Ext^i_{\Ic}(\lambda M,R)\cong\hh^i_\fm(M)$$
for all $i$, $0<i<d$. In particular, if $0<\gkd_R(M)<\infty$ then $\rgric(\lambda M)=\depth_R(M)$.}
\end{enumerate}
\end{thm}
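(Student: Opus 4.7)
The central identity is $\Ext^i_{\Ic}(\lambda M,R)\cong\Ext^{i+1}_R(\trk M,C)$ for $i>0$, supplied by Remark \ref{remark3}(ii) together with the vanishing $\Ext^1_R(\trk M,C)=0$ from Remark \ref{rem2}(i) (which uses horizontal linkage). For (i), the forward direction is a support calculation: $\gcpd(M_\fp)=0$ combined with Theorem \ref{G3}(ii) yields $\gcpd((\trk M)_\fp)=0$ for $\fp\neq\fm$, so each $\Ext^{i+1}_R(\trk M,C)$ has support in $\{\fm\}$, hence is of finite length. For the converse, localize the finite-length hypothesis at any $\fp\neq\fm$: the stable summand of $M_\fp$ is horizontally linked over $R_\fp$, has finite $\gkd$, and satisfies $\Ext^i_{\mathcal{I}_{C_\fp}}(\lambda M_\fp,R_\fp)=0$ for every $i>0$. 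Proposition \ref{t3}(ii) applied with each $n$ then forces $M_\fp$ to satisfy $\widetilde{S}_n$ for all $n$, giving $\depth_{R_\fp}(M_\fp)=\depth R_\fp$, whence $\gcpd(M_\fp)=0$ by Theorem \ref{G3}(iv).

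For (ii), the final equality $\rgric(\lambda M)=\depth_R(M)$ is immediate from Proposition \ref{th3}, since the hypothesis forces $\ng(M)\subseteq\{\fm\}$ and condition (iii) of that proposition holds trivially. For the isomorphism $\Ext^i_{\Ic}(\lambda M,R)\cong\hh^i_\fm(M)$ on $0<i<d$, the plan is to rewrite the left side as $\Ext^i_R(\lambda_CM,C)$ via Theorem \ref{t2} and to identify the $C$-dual $(\lambda_CM)^\triangledown$ with the first syzygy $\Omega M$. This identification comes from applying $\Hom_R(-,C)$ to the exact sequence $0\to M^\triangledown\to P_0^\triangledown\to\lambda_CM\to 0$ extracted from (\ref{d3}.1): using the inclusion $M\hookrightarrow M^{\triangledown\triangledown}$ from horizontal linkage, one sees that the image of $(P_0^\triangledown)^\triangledown\cong P_0$ in $M^{\triangledown\triangledown}$ is exactly $M$, so its kernel is $\Omega M$.

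The bridge to local cohomology is the hypercohomology spectral sequence
$$\E_2^{p,q}=\hh^p_\fm\bigl(\Ext^q_R(\lambda_CM,C)\bigr)\Longrightarrow\hh^{p+q}_\fm\bigl(R\Hom_R(\lambda_CM,R\Gamma_\fm C)\bigr),$$
whose abutment vanishes in total degrees $<d$ because $\depth_R(C)=d$ places $R\Gamma_\fm(C)$ in degrees $\geq d$. Under hypothesis (ii), part (i) gives that $\Ext^q_R(\lambda_CM,C)$ is of finite length for $q\geq 1$ (so in particular $\fm$-torsion), hence $\E_2^{p,q}=0$ whenever $p,q\geq 1$, and the spectral sequence collapses onto the two axes. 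The resulting edge differentials yield isomorphisms $\Ext^i_R(\lambda_CM,C)\cong\hh^{i+1}_\fm(\Omega M)$, and the syzygy sequence $0\to\Omega M\to P_0\to M\to 0$ combined with $\hh^j_\fm(P_0)=0$ for $j<d$ then shifts this to $\hh^i_\fm(M)$. The most delicate step will be the top boundary $i=d-1$, where both the spectral-sequence collapse and the syzygy-shift isomorphism degenerate at $\hh^d$-level terms; completing the argument there will require identifying the two edge maps out of $\hh^d_\fm(\Omega M)$ to promote an injection to an isomorphism.
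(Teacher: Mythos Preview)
Your treatment of (i) is essentially the paper's: both pass through the isomorphism $\Ext^i_{\Ic}(\lambda M,R)\cong\Ext^{i+1}_R(\trk M,C)$ from Remark~\ref{remark3}(ii) and then localize. The paper invokes Theorem~\ref{t1} directly rather than Proposition~\ref{t3}(ii) for the converse, which avoids having to track horizontal linkage and the operator $\lambda$ under localization, but the content is the same.

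For (ii) your spectral-sequence route is genuinely different from the paper's, and the gap you flag at $i=d-1$ is real and not easily closed. The two-axis collapse yields a long exact sequence
\[
\cdots\longrightarrow H^{n}\longrightarrow\E_2^{0,n}\xrightarrow{\ d_{n+1}\ }\E_2^{n+1,0}\longrightarrow H^{n+1}\longrightarrow\cdots,
\]
so $d_{n+1}\colon\Ext^n_R(\lambda_CM,C)\to\hh^{n+1}_\fm(\Omega M)$ is an isomorphism only while both $H^n$ and $H^{n+1}$ vanish, i.e.\ for $1\le n\le d-2$. At $n=d-1$ you obtain only an injection whose cokernel embeds in $H^d\cong\Hom_R(\lambda_CM,\hh^d_\fm(C))$, whereas the syzygy shift gives $\hh^{d-1}_\fm(M)\hookrightarrow\hh^d_\fm(\Omega M)$ with cokernel $\ker\bigl(\hh^d_\fm(P_0)\to\hh^d_\fm(M)\bigr)$. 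Matching the two images inside $\hh^d_\fm(\Omega M)$ would require identifying the spectral-sequence edge map $\hh^d_\fm(\Omega M)\to H^d$ with a concrete map manufactured from $P_0\twoheadrightarrow M$ and the evaluation $\lambda_CM\otimes C\to C$; you have not done this, and since $R$ is not assumed Cohen--Macaulay there is no finite-length counting argument to fall back on.

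The paper bypasses this boundary problem entirely. Setting $N=\trk M$, it uses the short exact sequences of Remark~\ref{remark3}(iii),
\[
0\to\Ext^i_R(N,C)\to\trc_iN\to L_i\to 0,\qquad 0\to L_i\to C^{n_i}\to\trc_{i+1}N\to 0,
\]
together with the sequence $0\to M\to\trc_1N\to X\to 0$ with $\gkd_R(X)=0$ from Lemma~\ref{l4}. Since each $\Ext^i_R(N,C)$ has finite length by part~(i) and $\depth_R(C)=d$, applying $\Gamma_\fm$ gives a chain of isomorphisms
\[
\Ext^{i+1}_R(N,C)\cong\hh^0_\fm(\trc_{i+1}N)\cong\hh^1_\fm(L_i)\cong\hh^1_\fm(\trc_iN)\cong\cdots\cong\hh^i_\fm(\trc_1N)\cong\hh^i_\fm(M).
\]
Each step uses only that the relevant local-cohomology index is $<d-1$ (for the $L$-to-$\trc$ shift) or $<d$ (for the $\trc$-to-$L$ shift and the final comparison with $M$), and one checks that for $i\le d-1$ these bounds are never violated. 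Thus the case $i=d-1$ needs no special treatment. In effect the paper's explicit walk through short exact sequences is exactly what your spectral sequence encodes, but it terminates before ever touching the degree-$d$ abutment.
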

\begin{proof}
As $M$ is horizontally linked, $\Ext^1_R(\trk M,C)=0$ by Remark \ref{rem2}(i). We have the following isomorphism:
\begin{equation}\tag{\ref{t4}.1}
\Ext^i_{\Ic}(\lambda M,R)\cong\Ext^{i+1}_R(\trk M,C),
\end{equation}
for all $i>0$ by Remark \ref{remark3}(ii).

(i). If $\gcpd(M_\fp)=0$ for all $\fp\in\Spec(R)-\{\fm\}$, then $\Ext^i_R(\trk M,C)_\fp=0$ for all $i>0$ and all $\fp\in\Spec(R)-\{\fm\}$ by Theorem \ref{G3}(i). It follows from (\ref{t4}.1) that $\ell_R(\Ext^i_{\Ic}(\lambda M,R))<\infty$ for all $i>0$. On the other hand, if $\ell_R(\Ext^i_{\Ic}(\lambda M,R))<\infty$ for all $i>0$ then $\Ext^i_R(\trk M,C)_\fp=0$ for all $i>0$ and all $\fp\in\Spec(R)-\{\fm\}$ by using (\ref{t4}.1) again. By Theorem \ref{t1}, $M_\fp$ satisfies $\widetilde{S}_n$ for all $n$ and all $\fp\in\Spec(R)-\{\fm\}$. Hence, if $\gkd_R(M)<\infty$ then $\gcpd(M_\fp)=\depth R_\fp-\depth_{R_\fp}(M_\fp)=0$ for all $\fp\in\Spec(R)-\{\fm\}$ by Theorem \ref{G3}(iv).

(ii). Set $N=\trk M$. By Remark \ref{remark3}(iii), we have the following exact sequences:
\begin{equation}\tag{\ref{t4}.2}
0\rightarrow\Ext^i_R(N,C)\rightarrow\trc_i N\rightarrow L_i\rightarrow0,
\end{equation}
\begin{equation}\tag{\ref{t4}.3}
0\rightarrow L_i\rightarrow\overset{n_i}{\oplus} C\rightarrow\trc_{i+1}N\rightarrow0,
\end{equation}
for all $i>0$. By part (i) and (\ref{t4}.1), $\Ext^i_R(N,C)$ is of finite length for all $i>0$. By
applying the functor $\Gamma_{\fm}(-)$ on the exact sequences (\ref{t4}.2) and (\ref{t4}.3), we get
\begin{equation}\tag{\ref{t4}.4}
\hh^j_{\fm}(\trc_{i-1}N)\cong\hh^j_{\fm}(L_{i-1}) \text{ for all}\ i\ \ \text{and}\ j, \text{with}\ 1\leq j<d,\ i\geq2,
\end{equation}
\begin{equation}\tag{\ref{t4}.5}
\Ext^i_R(N,C)=\Gamma_{\fm}(\Ext^i_R(N,C))\cong\Gamma_{\fm}
(\trc_iN) \text{ for all}\ i\geq2,
\end{equation}
and also
\begin{equation}\tag{\ref{t4}.6}
\hh^j_{\fm}(\trc_iN)\cong\hh^{j+1}_{\fm}(L_{i-1})
\text{ for all}\ i\ \text{and}\ j,  0\leq j<d-1,\  i\geq2.
\end{equation}
By Lemma \ref{l4}, we have the following exact sequence
$$0\rightarrow M\rightarrow\trc_1N\rightarrow X\rightarrow0,$$
where $\gkd_R(X)=0$. By applying the functor $\Gamma_{\fm}(-)$ on the above exact sequence, we get the following isomorphism
\begin{equation}\tag{\ref{t4}.7}
\hh^j_{\fm}(\trc_1N)\cong\hh^{j}_{\fm}(M)
\text{ for all } j,  0\leq j<d.
\end{equation}
Now by using (\ref{t4}.1), (\ref{t4}.4), (\ref{t4}.5), (\ref{t4}.6) and (\ref{t4}.7) we obtain the result.
\end{proof}
As an immediate consequence, we have the following results.
\begin{cor}
Let $(R,\fm)$ be a Cohen-Macaulay local ring of dimension $d\geq2$ with canonical module $\omega_R$ and let $M$ be a horizontally linked $R$--module such that $\underline{\Hom}_R(M,\omega_R)=0$. The following statements hold true.
\begin{enumerate}
\item{$M$ is generalized Cohen-Macaulay if and only if $\ell(\Ext^i_{\mathcal{I}_\omega}(\lambda M,R))<\infty$ for all $i>0$;}
\item{If $M$ is generalized Cohen-Macaulay, then $$\Ext^i_{\mathcal{I}_\omega}(\lambda M,R)\cong\hh^i_\fm(M)$$ for all $i$, $0<i<d$.}
\end{enumerate}
\end{cor}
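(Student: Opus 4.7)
The plan is to deduce the corollary by specializing Theorem~\ref{t4} to the case $C = \omega_R$. Over a Cohen-Macaulay local ring with canonical module, every finitely generated $R$-module has finite $\gomegad$ (see Example~\ref{example1}(iv) and the remark following the definition of $\gc$-dimension). Hence $\gkd_R(M) < \infty$ holds automatically in this setting, and both parts of Theorem~\ref{t4} become directly applicable with no additional hypothesis on the $\gc$-dimension of $M$.

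The main step is to translate the condition ``$\gcpd(M_\fp) = 0$ for all $\fp \in \Spec(R) - \{\fm\}$'' appearing in Theorem~\ref{t4}(i) into the statement that $M$ is generalized Cohen-Macaulay. Applying Theorem~\ref{G3}(iv) to the Cohen-Macaulay local ring $R_\fp$ with semidualizing module $(\omega_R)_\fp$, the vanishing $\gcpd(M_\fp) = 0$ is equivalent to $\depth_{R_\fp}(M_\fp) = \depth R_\fp = \dim R_\fp$, that is, to $M_\fp$ being maximal Cohen-Macaulay over $R_\fp$. This condition on the punctured spectrum is the standard characterization of a generalized Cohen-Macaulay module over a Cohen-Macaulay local ring. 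With this equivalence in hand, part (1) is an immediate restatement of Theorem~\ref{t4}(i), while part (2) follows directly from Theorem~\ref{t4}(ii) once the hypothesis of that part is read through the same translation.

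There is no substantive obstacle here; the content of the corollary is that the hypotheses of Theorem~\ref{t4} become essentially free when $C$ is taken to be the canonical module, and the $\gc$-dimension-theoretic statement acquires its classical incarnation in the Cohen-Macaulay setting. The only matter requiring care is the identification of $\gomegad$-vanishing on the punctured spectrum with the classical generalized Cohen-Macaulay property, and this is a direct consequence of the Auslander-Bridger-type depth equality in Theorem~\ref{G3}(iv) combined with the Cohen-Macaulay assumption $\depth R_\fp = \dim R_\fp$.
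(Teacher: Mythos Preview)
Your proposal is correct and follows essentially the same route as the paper's proof: reduce to Theorem~\ref{t4} with $C=\omega_R$, and identify ``$\gcpd(M_\fp)=0$ on the punctured spectrum'' with the generalized Cohen--Macaulay property. The paper cites \cite[Lemma~1.2, Lemma~1.4]{T} for the latter equivalence, whereas you derive it via Theorem~\ref{G3}(iv) together with the fact that $R_\fp$ is Cohen--Macaulay; both arguments arrive at the same place.
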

\begin{proof}
By \cite[Lemma 1.2 , Lemma 1.4]{T}, $M$ is generalized Cohen-Macaulay module if and only if $M_\fp$ is a maximal Cohen-Macaulay module $R_\fp$--module for all $\fp\in\Spec(R)-\{\fm\}$. Now the assertion is clear by Theorem \ref{t4}.
\end{proof}
\begin{cor}\label{cor1}
Let $(R,\fm)$ be a local ring of depth $d\geq2$ and let $M$ be a stable $R$--module. Assume that $\gcpd(M_\fp)=0$ for all $\fp\in\Spec(R)-\{\fm\}$ and that $\lambda M\in\mathcal{A}_C$, then the following statements hold true.
\begin{enumerate}[(i)]
\item{$M$ is horizontally linked if and only if $\Gamma_{\fm}(M)=0$.}
\item{If $M$ is horizontally linked, then $\Ext^i_R(\lambda M,R)\cong\hh^i_\fm(M)$
for all $i$, $0<i<d$. In particular, if $0<\gkd_R(M)<\infty$ then $\rgr(\lambda M)=\depth_R(M)$.}
\end{enumerate}
\end{cor}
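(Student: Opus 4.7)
The plan is to reduce both parts to Theorem~\ref{t4} by exploiting the hypothesis $\lambda M\in\ac$. Starting from the short exact sequence $(\ref{d1}.2)$, namely $0\to\lambda M\to P_1^*\to\Tr M\to 0$, the fact that $P_1^*$ is projective (hence in $\ac$) together with the two-out-of-three property (Example~\ref{example1}(i)) forces $\Tr M\in\ac$. This yields two simple but crucial consequences: first, by $(\ref{def2}.1)$, $\underline{\Hom}_R(M,C)\cong\Tor_1^R(\Tr M,C)=0$, so the standing hypothesis of Theorem~\ref{t4} is available whenever $M$ is horizontally linked; second, by Theorem~\ref{t2}, $\Ext^i_{\Ic}(\lambda M,R)\cong\Ext^i_R(\lambda M,R)$ for every $i\geq 0$, so relative cohomology can be replaced throughout by ordinary Ext.

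For the forward direction of (i), if $M$ is horizontally linked then Theorem~\ref{MS} says $M$ is a syzygy, so $M$ embeds in a free module and (since $d\geq 2$) has positive depth, whence $\Gamma_\fm(M)=0$. For the converse, I would use the exact sequence $(\ref{d3}.2)$: the kernel $\Ext^1_R(\trk M,C)$ sits inside $M$. The punctured-spectrum hypothesis together with Theorem~\ref{G3}(i) (and the compatibility of $\trk$ with localization) makes $\Ext^1_R(\trk M,C)_\fp=0$ for all $\fp\neq\fm$, so this kernel has support in $\{\fm\}$ and hence finite length; being a finite-length submodule of $M$, it lies inside $\Gamma_\fm(M)=0$, so $\Ext^1_R(\trk M,C)=0$. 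Combined with Remark~\ref{rem2}(ii) applied to $\Tr M\in\ac$, this gives $\Ext^1_R(\Tr M,R)=0$, and Theorem~\ref{MS} concludes that $M$ is horizontally linked.

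For (ii), Theorem~\ref{t4}(ii) delivers $\Ext^i_{\Ic}(\lambda M,R)\cong\hh^i_\fm(M)$ for $0<i<d$, and substituting the isomorphism $\Ext^i_{\Ic}(\lambda M,R)\cong\Ext^i_R(\lambda M,R)$ established in the first paragraph yields the desired $\Ext^i_R(\lambda M,R)\cong\hh^i_\fm(M)$. For the ``in particular'' clause, the equivalence of $\Ic$-relative and ordinary Ext over modules in $\ac$ gives $\rgric(\lambda M)=\rgr(\lambda M)$, while Theorem~\ref{t4}(ii) already records $\rgric(\lambda M)=\depth_R(M)$ when $0<\gkd_R(M)<\infty$.

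The only delicate step is the finite-length argument for $\Ext^1_R(\trk M,C)$ in the converse of (i); once the $\gc$-dimension-zero hypothesis on the punctured spectrum is correctly translated to vanishing of this Ext group off $\fm$, the rest is a direct application of Theorem~\ref{t4} and the transparency afforded by $\lambda M\in\ac$.
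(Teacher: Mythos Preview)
Your proof is correct and follows essentially the same route as the paper's. The only minor difference is in part~(i): the paper splits the sequence $(\ref{d3}.2)$ into two short exact sequences and, using $\depth_R(C)=d\geq 2$ to force $\Gamma_\fm(L)=0$, obtains the full isomorphism $\Ext^1_R(\trk M,C)\cong\Gamma_\fm(M)$, which handles both implications at once; you instead treat the forward direction separately via the syzygy embedding and use only the inclusion $\Ext^1_R(\trk M,C)\subseteq\Gamma_\fm(M)$ for the converse. Both arguments are valid, and part~(ii) is identical to the paper's.
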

\begin{proof}
(i) As $M$ is locally $\gc$-dimension zero on the punctured spectrum, it follows from Theorem \ref{G3}(i) that $\Supp_R(\Ext^1_R(\trk
M,C))\subseteqq\{\fm\}$. From the exact sequence (\ref{d3}.2), we obtain the following exact sequences:
\begin{equation}\tag{\ref{cor1}.1}
0\rightarrow\Ext^1_R(\trk M,C)\rightarrow M\rightarrow L\rightarrow 0,
\end{equation}
\begin{equation}\tag{\ref{cor1}.2}
0\rightarrow L\rightarrow \oplus C
\end{equation}
As $\depth_R(C)=d\geq2$, it follows from the exact sequence (\ref{cor1}.2) that $\Gamma_{\fm}(L)=0$.
By applying the functor $\Gamma_{\fm}(-)$ on the exact sequence (\ref{cor1}.1), we get
\begin{equation}\tag{\ref{cor1}.3}
\Ext^1_R(\trk M,C)\cong\Gamma_{\fm}(\Ext^1_R(\trk M,C))\cong\Gamma_{\fm}(M)
\end{equation}
Now the assertion is clear by (\ref{cor1}.3), Theorem \ref{MS} and Remark \ref{rem2}(ii).

(ii). By Example \ref{example1}(i), $\Tr M\in\ac$ and so $\underline{\Hom}_R(M,C)=0$ by (\ref{def2}.1).
Now the assertion is clear by Theorem \ref{t4} and Theorem \ref{t2}.
\end{proof}

The following lemma will be useful for the rest of the paper.
\begin{lem}\label{l6}
Let $R$ be a semiperfect ring and $M$ a horizontally linked $R$--module. If $\id_{R_\fp}(C_\fp)<\infty$ for $\fp\in X^0$, then the following are equivalent:
\begin{enumerate}[(i)]
\item{$\underline{\Hom}_R(M,C)=0$;}
\item{$\lambda M\otimes_RC$ satisfies $\widetilde{S}_1$.}
\end{enumerate}
\end{lem}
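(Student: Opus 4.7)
The plan is to derive both implications from the exact sequences obtained by tensoring (\ref{d1}.2), $0\to\lambda M\to P_1^*\to\Tr M\to 0$, with $C$. Using the identification $\Tor_1^R(\Tr M,C)\cong\underline{\Hom}_R(M,C)$ from (\ref{def2}.1), the isomorphism $\Tr M\otimes_R C\cong\trk M$ from Remark \ref{remark3}(i), and writing $\lambda_C M:=\im(f^{\triangledown})$ as in Remark \ref{remark3}(ii), I obtain
\begin{equation*}
0\to\underline{\Hom}_R(M,C)\to\lambda M\otimes_R C\to\lambda_C M\to 0,
\end{equation*}
\begin{equation*}
0\to\lambda_C M\to C^n\to\trk M\to 0.
\end{equation*}
Applying the depth lemma to the second sequence together with $\depth_{R_\fp}(C_\fp)=\depth R_\fp$ shows that $\lambda_C M$ automatically satisfies $\widetilde{S}_1$. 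Hence (i)$\Rightarrow$(ii) is immediate: if $\underline{\Hom}_R(M,C)=0$ then $\lambda M\otimes_R C\cong\lambda_C M$ inherits $\widetilde{S}_1$.

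For (ii)$\Rightarrow$(i), the injection $\underline{\Hom}_R(M,C)\hookrightarrow\lambda M\otimes_R C$ gives $\Ass_R(\underline{\Hom}_R(M,C))\subseteq\Ass_R(\lambda M\otimes_R C)\subseteq X^0(R)$, so it suffices to prove the local vanishing $\underline{\Hom}_R(M,C)_\fp=\Tor_1^{R_\fp}(\Tr M_\fp,C_\fp)=0$ at each $\fp\in X^0(R)$.

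The main obstacle is this local step, and it is precisely where the hypothesis $\id_{R_\fp}(C_\fp)<\infty$ enters. Fix $\fp\in X^0(R)$. Bass's formula gives $\id_{R_\fp}(C_\fp)=\depth R_\fp=0$, so $C_\fp$ is a nonzero finitely generated injective $R_\fp$-module; Matlis's structure theorem then forces $R_\fp$ to be Artinian with $C_\fp\cong E_{R_\fp}(k(\fp))^n$ for some $n\geq 1$. Comparing endomorphism rings via $\Hom_{R_\fp}(C_\fp,C_\fp)\cong R_\fp$ (from $C$ semidualizing) with $\Hom_{R_\fp}(E_{R_\fp}(k(\fp))^n,E_{R_\fp}(k(\fp))^n)\cong R_\fp^{n\times n}$ forces $n=1$, so $C_\fp$ is the Matlis module of $R_\fp$. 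Matlis duality now yields
\begin{equation*}
\Hom_{R_\fp}\bigl(\Tor_1^{R_\fp}(\Tr M_\fp,C_\fp),\,C_\fp\bigr)\cong\Ext^1_{R_\fp}\bigl(\Tr M_\fp,\Hom_{R_\fp}(C_\fp,C_\fp)\bigr)\cong\Ext^1_R(\Tr M,R)_\fp=0,
\end{equation*}
where the final equality uses that $M$ being horizontally linked forces $\Ext^1_R(\Tr M,R)=0$ by Theorem \ref{MS}. Since Matlis duality is faithful on finitely generated modules over Artinian local rings, this gives $\Tor_1^{R_\fp}(\Tr M_\fp,C_\fp)=0$, as required.
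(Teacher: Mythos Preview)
Your proof is correct. The implication (i)$\Rightarrow$(ii) is essentially the paper's argument: both of you observe that $\lambda M\otimes_RC$ embeds in a direct sum of copies of $C$ once $\Tor_1^R(\Tr M,C)=0$, and then read off $\widetilde{S}_1$ (you via the depth lemma, the paper via Theorem~\ref{t1}).

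For (ii)$\Rightarrow$(i) you take a genuinely different route. The paper invokes the Auslander--Bridger exact sequence \cite[Theorem 2.8]{AB}
\[
0\to\Ext^1_R(\mathcal{T}_2(\Tr M),C)\to\Tor_1^R(\Tr M,C)\to\Hom_R(\Ext^1_R(\Tr M,R),C),
\]
kills the right-hand term using horizontal linkage, and then localizes at $\fp\in X^0$ to make the left-hand $\Ext^1$ vanish by injectivity of $C_\fp$. You instead use the standard Ext--Tor adjunction with the injective module $C_\fp$,
\[
\Hom_{R_\fp}\bigl(\Tor_1^{R_\fp}((\Tr M)_\fp,C_\fp),C_\fp\bigr)\cong\Ext^1_{R_\fp}\bigl((\Tr M)_\fp,\Hom_{R_\fp}(C_\fp,C_\fp)\bigr)\cong\Ext^1_R(\Tr M,R)_\fp=0,
\]
and then argue faithfulness of $\Hom_{R_\fp}(-,C_\fp)$. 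Your approach avoids the Auslander--Bridger machinery entirely, at the cost of a short detour through Matlis theory to identify $C_\fp$; the paper's approach is quicker if one already has that exact sequence at hand. One small remark: your detour through ``$R_\fp$ Artinian and $C_\fp\cong E_{R_\fp}(k(\fp))$'' is correct but heavier than needed---since $\depth C_\fp=\depth R_\fp=0$, the injective hull $E_{R_\fp}(k(\fp))$ is already a direct summand of the injective module $C_\fp$, and $\Hom_{R_\fp}(-,E_{R_\fp}(k(\fp)))$ is faithful over any Noetherian local ring; this gives the vanishing without establishing Artinianness.
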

\begin{proof}
Consider the exact sequence $0\rightarrow\lambda M\rightarrow F\rightarrow\Tr M\rightarrow0$. By applying the functor $-\otimes_R C$ on the above exact sequence
we obtain the following exact sequence:
\begin{equation}\tag{\ref{l6}.1}
0\rightarrow\Tor_1^R(\Tr M,C)\rightarrow\lambda M\otimes_RC\rightarrow F\otimes_R C\rightarrow \Tr M\otimes_RC\rightarrow0.
\end{equation}

(i)$\Rightarrow$(ii) By (\ref{def2}.1), $\Tor_1^R(\Tr M,C)=0$ and so $\lambda M\otimes_RC$ is a first $C$-syzygy module by the exact sequence (\ref{l6}.1.).
Hence $\lambda M\otimes_RC$ satisfies $\widetilde{S}_1$ by Theorem \ref{t1}.

(ii)$\Rightarrow$(i) Assume, contrarily, that $\Tor_1^R(\Tr M,C)\cong\underline{\Hom}_R(M,C)\neq0$ and that $\fp\in\Ass_R(\Tor_1^R(\Tr M,C))$. By the exact sequence (\ref{l6}.1), $\depth_{R_\fp}((\lambda M\otimes_R C)_\fp)=0$. As $\lambda M\otimes_RC$ satisfies $\widetilde{S}_1$, $\fp\in X^0$. By \cite[Theorem 2.8]{AB}, there exists the following exact sequence:
\begin{equation}\tag{\ref{l6}.2}
0\rightarrow\Ext^1_R(\mathcal{T}_2(\Tr M),C)\rightarrow\Tor_1^R(\Tr M,C)\rightarrow\Hom_R(\Ext^1_R(\Tr M,R),C).
\end{equation}
As $M$ is horizontally linked, $\Ext^1_R(\Tr M,R)=0$ by Theorem \ref{MS}. As $\fp\in X^0$, $C_\fp$ is injective. By localizing the exact sequence (\ref{l6}.2) at $\fp$, we conclude that $\Tor_1^R(\Tr M,C)_\fp\cong\Ext^1_R(\mathcal{T}_2(\Tr M),C)_\fp=0$ which is a contradiction.
\end{proof}


\begin{thm}\label{theorem3} Let $R$ be a semiperfect ring and let $I$ , $J$ be ideals of $R$ which are linked by zero ideal. Assume that $\id_{R_\fp}(C_\fp)<\infty$ for all $\fp\in X^0(R)$ and that $\frac{C}{JC}$ satisfies $\widetilde{S}_1$. The following statements hold true:
\begin{enumerate}[(i)]
\item{$\gkd_R(R/I)=0$ if and only if $\Ext^i_R(R/I,C)=0=\Ext^i_{\mathcal{I}_C}(R/J,R)$ for all $i>0$.}
\item{If $\gkd_R(R/I)<\infty$ then $R/I$ satisfies $\widetilde{S}_n$ if and only if $\Ext^i_{\Ic}(R/J,R)=0$ for all $i$, $0<i<n$.}
\item{If $(R,\fm)$ is a local ring of depth $d\geq2$ and $\gcpd((R/I)_\fp)=0$ for all $\fp\in\Spec(R)-\{\fm\}$, then $$\Ext^i_{\mathcal{I}_C}(R/J,R)\cong\hh^i_{\fm}(R/I)$$ for all $i$, $0<i<d$.}
\end{enumerate}
\end{thm}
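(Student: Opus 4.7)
The plan is to reduce each of (i), (ii), (iii) to the already-established Propositions \ref{t3} and \ref{p1} and to Theorem \ref{t4}, all of which require (a) that the relevant module be horizontally linked and (b) the vanishing $\underline{\Hom}_R(M,C)=0$. The setup delivers (a) essentially for free, and (b) is supplied by Lemma \ref{l6}.

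First I would translate the ideal-theoretic hypothesis ``$I$ and $J$ are linked by the zero ideal'' into the module-theoretic language: by \cite[Proposition 1]{MS}, this is exactly the statement that $R/I$ and $R/J$ are horizontally linked, i.e.\ $R/I\cong\lambda(R/J)$ and $R/J\cong\lambda(R/I)$ as $R$-modules. In particular, $R/I$ is a horizontally linked $R$-module with $\lambda(R/I)\cong R/J$, so $\lambda(R/I)\otimes_RC\cong(R/J)\otimes_RC\cong C/JC$. The hypothesis that $C/JC$ satisfies $\widetilde{S}_1$ combined with $\id_{R_\fp}(C_\fp)<\infty$ for every $\fp\in X^0(R)$ then lets me invoke Lemma \ref{l6} with $M=R/I$ to conclude $\underline{\Hom}_R(R/I,C)=0$. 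This single verification is the key input; once it is in hand, each part of the theorem is a direct specialization.

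For (i), I would apply Proposition \ref{t3}(i) to $M=R/I$, using $\lambda M\cong R/J$, to obtain immediately that $\gkd_R(R/I)=0$ if and only if $\Ext^i_R(R/I,C)=0=\Ext^i_{\Ic}(R/J,R)$ for all $i>0$. For (ii), I would apply Proposition \ref{t3}(ii) to $M=R/I$: the hypothesis $\gkd_R(R/I)<\infty$ implies by localization that $\gcpd((R/I)_\fp)<\infty$ for every $\fp\in\Spec R$, in particular on $X^{n-1}(R)$, so the proposition yields that $R/I$ satisfies $\widetilde{S}_n$ if and only if $\Ext^i_{\Ic}(R/J,R)=0$ for all $i$ with $0<i<n$. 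For (iii), assuming $(R,\fm)$ is local of depth $d\geq 2$ and $\gcpd((R/I)_\fp)=0$ for all $\fp\in\Spec(R)\setminus\{\fm\}$, the assumptions of Theorem \ref{t4}(ii) are met by $M=R/I$; that theorem then gives $\Ext^i_{\Ic}(R/J,R)\cong\hh^i_\fm(R/I)$ for $0<i<d$, as required.

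The only step that requires any genuine care is the verification of $\underline{\Hom}_R(R/I,C)=0$ via Lemma \ref{l6}; everything else is invocation. I do not expect any additional technical obstacle, since the finite $\gc$-dimension (or locally-finite $\gc$-dimension) hypotheses in parts (ii) and (iii) match directly the hypotheses required by Proposition \ref{t3}(ii) and Theorem \ref{t4}(ii) respectively.
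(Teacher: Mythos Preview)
Your proposal is correct and follows essentially the same approach as the paper's own proof, which simply states that the theorem is an immediate consequence of Lemma \ref{l6}, Proposition \ref{t3}, and Theorem \ref{t4}. You have spelled out in appropriate detail how the ideal-linkage hypothesis translates to horizontal linkage of $R/I$ and $R/J$, how Lemma \ref{l6} supplies $\underline{\Hom}_R(R/I,C)=0$, and how each part then reduces to the cited results; the reference to Proposition \ref{p1} in your opening sentence is unnecessary but harmless.
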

\begin{proof}
This is an immediate consequence of Lemma \ref{l6}, Proposition \ref{t3} and Theorem \ref{t4}.
\end{proof}
Let $R$ be a Cohen-Macaulay local ring with canonical module
$\omega_R$. If $R$ is generically Gorenstein, then $\omega_R$ can be
identified with an ideal of $R$. For any such identification
$\omega_R$ is an ideal of height one or equals $R$ (see
\cite[Proposition 3.3.18]{BH}).
\begin{cor}\label{cor8}
Let $R$ be a Cohen-Macaulay local ring of dimension $d$ with canonical module $\omega_R$. Assume that the
ideals $I$ and $J$ are linked by zero ideal such that $J\omega_R=J\cap\omega_R$. If $R$ is generically Gorenstein, then the following statements hold true:
\begin{enumerate}[(i)]
\item{$R/I$ is Cohen-Macaulay if and only if $\Ext^i_{\mathcal{I}_{\omega_R}}(R/J,R)=0$ for all $i$, $1\leq i\leq d$.}
\item{$R/I$ satisfies $\widetilde{S}_n$ if and only if $\Ext^i_{\mathcal{I}_{\omega_R}}(R/J,R)=0$ for all $i$, $0<i<n$.}
\item{If $d\geq2$ and $R/I$ is generalized Cohen-Macaulay, then $$\Ext^i_{\mathcal{I}_{\omega_R}}(R/J,R)\cong\hh^i_{\fm}(R/I)$$ for all $i$, $0<i<d$.}
\end{enumerate}
\end{cor}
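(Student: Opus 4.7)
The plan is to apply Theorem \ref{theorem3} with the semidualizing module $C = \omega_R$, so the main task is to verify its two hypotheses. Finiteness of injective dimension of $C$ at the primes in $X^0(R)$ is automatic since $\id_R(\omega_R) = d < \infty$, and the substantive point is to confirm that $\omega_R/J\omega_R$ satisfies $\widetilde{S}_1$.

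To establish $\widetilde{S}_1$ I will exhibit $\omega_R/J\omega_R$ as an $R$-submodule of a finitely generated free module. Since $R$ is Cohen-Macaulay one has $\Ass R = \Min R$, so any submodule of $R^n$ has associated primes only at depth zero, which is precisely the required condition. The generically Gorenstein hypothesis, together with \cite[Proposition 3.3.18]{BH}, lets me identify $\omega_R$ with an ideal of $R$; the assumption $J\omega_R = J \cap \omega_R$ then produces
$$\omega_R/J\omega_R = \omega_R/(J \cap \omega_R) \cong (J + \omega_R)/J \hookrightarrow R/J.$$
Because $I$ and $J$ are linked by zero, $J = (0:I)$; for a generating set $e_1, \dots, e_n$ of $I$ the map $R \to I^n$, $r \mapsto (re_1, \dots, re_n)$, has kernel exactly $(0:I) = J$, so $R/J \hookrightarrow I^n \subseteq R^n$. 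Composing yields the desired embedding of $\omega_R/J\omega_R$ into $R^n$.

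With the two hypotheses verified, part (ii) is a direct application of Theorem \ref{theorem3}(ii), noting that $\gkd_R(R/I) < \infty$ automatically because $\omega_R$ is a canonical module. Part (iii) follows from Theorem \ref{theorem3}(iii) combined with the standard characterization \cite[Lemma 1.2, Lemma 1.4]{T} that, over a Cohen-Macaulay local ring with canonical module, the generalized Cohen-Macaulay modules are precisely those with $\gomegad$ equal to zero on the punctured spectrum. For part (i) I will instead appeal to Corollary \ref{cor10}, which characterizes $\gkd_R(R/I) = 0$ by the vanishing of $\Ext^i_{\mathcal{I}_{\omega_R}}(R/J,R)$ in the shortened range $1 \leq i \leq \depth R = d$. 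To identify ``$R/I$ is Cohen-Macaulay'' with ``$R/I$ is maximal Cohen-Macaulay'' (equivalently $\gkd_R(R/I) = 0$), I observe that $I = (0:J)$ with $J \neq 0$ forces $\height I = 0$: the minimal primes of $V(I) = \Supp J$ lie in $\Ass J \subseteq \Ass R = \Min R$, so $\dim R/I = d$ and the two notions coincide.

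The main obstacle is the $\widetilde{S}_1$ verification, where all three structural hypotheses --- generic Gorensteinness, the transversality $J\omega_R = J \cap \omega_R$, and the annihilator duality $J = (0:I)$ --- must be combined into a single embedding into $R^n$; the rest of the argument is a mechanical application of the results already established in this section.
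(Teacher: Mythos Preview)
Your proof is correct and follows essentially the same strategy as the paper: verify the $\widetilde{S}_1$ hypothesis on $\omega_R/J\omega_R$ by embedding it into the syzygy module $R/J$, then invoke Theorem~\ref{theorem3}. The only cosmetic differences are that the paper separates off the Gorenstein case (citing Schenzel) and obtains the embedding $\omega_R/J\omega_R \hookrightarrow R/J$ via the Tor computation $\Tor_1^R(R/J,R/\omega_R)\cong (J\cap\omega_R)/J\omega_R=0$ from the short exact sequence $0\to\omega_R\to R\to R/\omega_R\to 0$, whereas you handle all cases uniformly and reach the same embedding through the second isomorphism theorem; your explicit use of Corollary~\ref{cor10} for part~(i) is also a bit more precise than the paper's blanket reference to Theorem~\ref{theorem3}.
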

\begin{proof}
If $R$ is Gorenstein, then the assertion follows from \cite[Theorem 4.1]{Sc} and \cite[Corollary 3.3]{Sc}. Now assume that $R$ is not Gorenstein.
As $R$ is generically Gorenstein and it is not Gorenstein,
$\omega_R$ can be identified with an ideal of height one. The exact
sequence $0\rightarrow\omega_R\rightarrow R\rightarrow
R/\omega_R\rightarrow0$ implies the exact sequence
$$0\rightarrow\Tor_1^R(R/J,R/\omega_R)\rightarrow R/J\underset{R}{\otimes}\omega_R\rightarrow R/J\rightarrow R/J\underset{R}{\otimes} R/\omega_R\rightarrow0.$$
As $\Tor_1^R(R/J,R/\omega_R)\cong\frac{J\cap\omega_R}{J\omega_R}=0$, we obtain the following exact sequence
\begin{equation}\tag{\ref{cor8}.1}
0\rightarrow R/J\underset{R}{\otimes}\omega_R\rightarrow R/J\rightarrow
R/J\underset{R}{\otimes} R/\omega_R\rightarrow0.
\end{equation}
As $J$ is linked by $I$, $R/J$ is a first syzygy module. It follows from the exact sequence (\ref{cor8}.1) that $R/J\underset{R}{\otimes}\omega_R$ satisfies $\widetilde{S}_1$ and so the assertion is clear by Theorem \ref{theorem3}.
\end{proof}
Recall that, for an $R$--module $M$ we always have
$\gr_R(M)\leq\gkd_R(M)$. The module $M$ is called
$\gc$-\emph{perfect} if $\gr_R(M)=\gkd_R(M)$. An $R$--module $M$ is
called $\gc$-\emph{Gorenstein} if it is $\gc$-perfect and
$\Ext^n_R(M,C)$ is cyclic, where $n=\gkd_R(M)$. An ideal $I$ is
called $\gc$-perfect(resp.$\gc$-Gorenstein) if $R/I$ is
$\gc$-perfect(resp.$\gc$-Gorenstein) as $R$--module. Note that if
$I$ is a $\gc$-Gorenstein
ideal of $\gc$-dimension $n$, then $\Ext^n_R(R/I,C)\cong R/I$ (see
\cite[10]{G}).

We frequently use the following results of Golod.
\begin{lem}\label{G1} \cite[Corollary]{G}
Let $R$ be a local ring and $I$ an ideal of $R$. Assume that $K$ is an $R$--module and that $n$ is a fixed integer. If \emph{$\Ext^j_R(R/I,K)=0$} for
all $j\neq n$ then there is an isomorphism of functors \emph{$\Ext^i_{R/I}(-,\Ext^n_R(R/I,K))
\cong\Ext^{n+i}_R(-,K)$} on the category of $R/I$--modules for all $i\geq0$.
\end{lem}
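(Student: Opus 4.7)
The plan is to apply the Grothendieck (change-of-rings) spectral sequence associated with the composition $\Hom_{R/I}(-,?)\circ\Hom_R(R/I,-)$. First I would record the standard adjunction $\Hom_R(M,N)\cong\Hom_{R/I}(M,\Hom_R(R/I,N))$ for every $R/I$-module $M$ and every $R$-module $N$, which expresses $\Hom_R(M,-)$, restricted to $R/I$-modules in the first slot, as the indicated composition on $R$-modules.

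Next, to feed this composition into the Grothendieck spectral sequence, I would verify that $\Hom_R(R/I,-)$ sends injective $R$-modules to injective $R/I$-modules; this is immediate because $\Hom_R(R/I,-)$ is right adjoint to the exact restriction-of-scalars functor from $R/I$-modules to $R$-modules. The resulting first-quadrant spectral sequence is
$$E^{p,q}_2=\Ext^p_{R/I}(M,\Ext^q_R(R/I,K))\Rightarrow\Ext^{p+q}_R(M,K).$$
The hypothesis that $\Ext^q_R(R/I,K)=0$ for $q\neq n$ forces $E^{p,q}_2=0$ off the row $q=n$, so every differential is zero and the spectral sequence collapses. The abutment in total degree $i+n$ then has a single nonzero associated graded piece, yielding the natural (hence functorial in $M$) isomorphism
$$\Ext^{i+n}_R(M,K)\cong\Ext^i_{R/I}(M,\Ext^n_R(R/I,K))$$
for every $i\geq 0$, which is the assertion.

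I do not foresee a serious obstacle: the only delicate input is the injectivity-preserving property of $\Hom_R(R/I,-)$, which is a general adjointness fact. If one prefers to avoid the spectral-sequence machinery, the same conclusion can be obtained at the level of resolutions: choose an injective resolution $I^\bullet$ of $K$ over $R$, observe that $\Hom_R(R/I,I^\bullet)$ is a complex of injective $R/I$-modules whose cohomology is concentrated in degree $n$, where it equals $\Ext^n_R(R/I,K)$, and conclude that the appropriate truncation furnishes an injective $R/I$-resolution of $\Ext^n_R(R/I,K)$. Applying $\Hom_{R/I}(M,-)$ together with the adjunction above and computing cohomology in degree $i+n$ recovers the desired isomorphism directly.
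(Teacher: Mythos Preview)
Your argument is correct: the Grothendieck spectral sequence for the composition $\Hom_{R/I}(M,-)\circ\Hom_R(R/I,-)$ is exactly the right tool, and the collapse along the row $q=n$ gives the functorial isomorphism as you describe. The alternative resolution-level argument you sketch is also sound.

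Note, however, that the paper does not supply its own proof of this lemma; it is simply quoted from Golod \cite[Corollary]{G} and used as a black box. So there is no in-paper argument to compare against. Your spectral-sequence proof is the standard one and is presumably what Golod had in mind.
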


\begin{thm}\label{G2} \cite[Proposition 5]{G}
Let $R$ be a local ring, $I$ a $\gc$-perfect ideal. Set $K=\Ext^{\tiny{\gr(I)}}_R(R/I,C)$.
Then the following statements hold true.
\begin{itemize}
        \item[(i)]{$K$ is a semidualizing $R/I$--module.}
         \item[(ii)]If $M$ is a $R/I$--module, then $\gkd_R(M)<\infty$ if and only if $\gkkd_{R/I}(M)<\infty$, and
         if these dimensions are finite then $\gkd_R(M)=\gr(I)+\gkkd_{R/I}(M)$.
\end{itemize}
\end{thm}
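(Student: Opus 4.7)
The plan is to take Lemma \ref{G1} as the engine of the entire argument. Since $I$ is $\gc$-perfect, $\gr(I) = \gkd_R(R/I) = n$, and the definition of grade combined with Theorem \ref{G3}(iii) forces $\Ext^i_R(R/I,C) = 0$ for every $i \neq n$. Hence Lemma \ref{G1} applies (taking $K$ in that lemma to be $C$), yielding the natural isomorphism of functors
\begin{equation*}
\Ext^i_{R/I}(M,K) \cong \Ext^{n+i}_R(M,C) \qquad (M \text{ an $R/I$-module},\ i \geq 0)
\end{equation*}
on which both parts of the theorem will rest.

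For (i), the key intermediate fact is the biduality $\Ext^n_R(K,C) \cong R/I$ with $\Ext^i_R(K,C) = 0$ for $i \neq n$. To prove it I would take a projective resolution $P_\bullet \to R/I$ and dualize into $P_\bullet^\triangledown = \Hom_R(P_\bullet, C)$; the $\gc$-perfect hypothesis forces the cohomology of this complex to be concentrated in degree $n$, where it equals $K$. Each $P_i^\triangledown$ is a finite direct sum of copies of $C$, so it lies in $\bc$, has $\Ext^{>0}_R(P_i^\triangledown, C)=0$, and satisfies $(P_i^\triangledown)^\triangledown \cong P_i$ via the homothety. Dualizing $P_\bullet^\triangledown$ back therefore returns $P_\bullet$, whose cohomology is $R/I$ in degree zero, and a routine hyper-Ext collapse converts this statement into the claimed biduality. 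Substituting $M = K$ into the displayed formula then gives
\begin{equation*}
\Hom_{R/I}(K,K) \cong \Ext^n_R(K,C) \cong R/I, \qquad \Ext^i_{R/I}(K,K) \cong \Ext^{n+i}_R(K,C) = 0 \quad (i > 0),
\end{equation*}
and tracing the constructions shows that the first isomorphism is the homothety; so $K$ is semidualizing over $R/I$.

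For (ii), once both dimensions are known to be finite, the equality is immediate from Theorem \ref{G3}(iv): $\gkd_R(M) = \depth R - \depth_R(M)$ and $\gkkd_{R/I}(M) = \depth R/I - \depth_{R/I}(M)$. Since $I$ annihilates $M$, the two depths of $M$ coincide, and the difference of the two dimensions collapses to $\depth R - \depth R/I = \gkd_R(R/I) = \gr(I)$. For the finiteness equivalence, the direction $\gkkd_{R/I}(M) < \infty \Rightarrow \gkd_R(M) < \infty$ proceeds by splicing: a finite totally-$K$-reflexive resolution of $M$ over $R/I$ reduces the question to showing $\gkd_R(N) \leq n$ for any totally $K$-reflexive $R/I$-module $N$, which follows from the biduality of part (i) (giving $\gkd_R(K) = n$) together with the displayed formula applied to $N$ and its $K$-transpose.

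The reverse direction $\gkd_R(M) < \infty \Rightarrow \gkkd_{R/I}(M) < \infty$ is the main obstacle. The displayed formula immediately delivers $\Ext^{>0}_{R/I}(M,K) \cong \Ext^{>n}_R(M,C) = 0$, which is half of what a totally $K$-reflexive condition demands; but the reflexivity axiom and the parallel vanishing for $\mathsf{Tr}_K M$ have to be produced by hand, and then one must actually assemble a finite $K$-resolution. This calls for an Auslander-Bridger style approximation over $R/I$ (equivalently, for a proof that $M$ lies in the Auslander class $\mathcal{A}_K$ over $R/I$ in the appropriate derived sense). I expect this approximation step to carry the bulk of the technical weight of the proof.
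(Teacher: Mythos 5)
First, note that the paper itself gives no proof of this statement: it is quoted verbatim as \cite[Proposition 5]{G}, so the only comparison available is with Golod's argument, which your sketch partially reconstructs. Your treatment of part (i) is essentially the right route: the vanishing $\Ext^i_R(R/I,C)=0$ for $i\neq n$ (which needs, besides Theorem \ref{G3}(iii), the remark that an $R$-sequence in $I$ is a $C$-sequence, so that $\gr(I,C)\geq\gr(I)$), then Lemma \ref{G1}, then the biduality $\Ext^n_R(K,C)\cong R/I$. Two caveats there: the projective resolution of $R/I$ need not be finite, so ``dualize and dualize back'' is not literally available; the standard fix is to truncate at the $n$-th syzygy $G=\Omega^n(R/I)$, which is totally $C$-reflexive, obtaining the finite exact sequence $0\to P_0^{\triangledown}\to\cdots\to P_{n-1}^{\triangledown}\to G^{\triangledown}\to K\to 0$ and dualizing that; and the identification of the abstract isomorphism $\Hom_{R/I}(K,K)\cong R/I$ with the homothety does require an argument (a module abstractly isomorphic to $R/I$ can fail to be generated by a prescribed element), which you only gesture at.

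The genuine gap is in part (ii), in the direction $\gkd_R(M)<\infty\Rightarrow\gkkd_{R/I}(M)<\infty$, which you yourself flag as the main obstacle and leave undone; moreover the one concrete step you assert there is false as stated. From $\gkd_R(M)<\infty$ you cannot conclude $\Ext^{>n}_R(M,C)=0$: that vanishing is equivalent to $\gkd_R(M)\leq n$, whereas finiteness only gives $\gkd_R(M)=\depth R-\depth_R(M)$, which exceeds $n$ whenever $\depth_R(M)<\depth(R/I)$ (take $M$ the residue field). The missing idea is to pass to a high syzygy over $R/I$: each $\Omega^s_{R/I}(M)$ again has finite $\gkd_R$ (two-out-of-three in $0\to\Omega M\to(R/I)^b\to M\to0$, since $\gkd_R(R/I)<\infty$), and for $s\gg0$ its depth equals $\depth(R/I)$, so its $\gc$-dimension over $R$ is exactly $n$, i.e.\ it is $\gc$-perfect of grade $n$. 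For such a module $N$ one then needs the duality theory of $\gc$-perfect modules --- $\Ext^n_R(N,C)$ is again $\gc$-perfect of grade $n$ and the biduality $N\cong\Ext^n_R(\Ext^n_R(N,C),C)$ holds --- which, combined with Lemma \ref{G1}, yields $\Hom_{R/I}(\Hom_{R/I}(N,K),K)\cong N$ and the vanishing of $\Ext^{>0}_{R/I}$ of both $N$ and its $K$-dual against $K$; this is exactly the ``reflexivity axiom produced by hand'' that your sketch defers, and it also underlies the converse direction, where your appeal to ``the displayed formula applied to $N$ and its $K$-transpose'' silently uses the nontrivial criterion that concentrated $\Ext$ against $C$ plus double-$\Ext$ biduality forces finite (indeed zero relative) $\gc$-dimension. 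Without the syzygy reduction and this perfect-module duality, part (ii) does not close.
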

An $R$--module $M$ is said to be \emph{linked} to an $R$--module
$N$, by an ideal $\fc$ of $R$, if
$\fc\subseteq\ann_R(M)\cap\ann_R(N)$ and $M$ and $N$ are
horizontally linked as $R/{\fc}$--modules. In this situation we
denote $M\underset{\fc}{\thicksim}N$ \cite[Definition 4]{MS}.
The following is a generalization of \cite[Lemma 5.8]{DS}.
\begin{lem}\label{l1}
Let $R$ be a semiperfect ring, $\fa$ a $\gc$-perfect ideal of $R$. For a horizontally linked $R/\fa$--module $M$,
$\gr_R(M)=\gr(\fa)$.
\end{lem}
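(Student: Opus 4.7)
The plan is to establish $\gr_R(M)=\gr(\fa)$ by proving the two inequalities separately. Set $g:=\gr(\fa)$; since $\fa$ is $\gc$-perfect, by definition $\gkd_R(R/\fa)=\gr_R(R/\fa)=g$.

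For the lower bound $\gr_R(M)\geq g$, I would just use that $M$ is an $R/\fa$-module, so $\fa\subseteq\ann_R(M)$. Combining the classical identity $\gr_R(M)=\gr(\ann_R M,R)$ with grade monotonicity in the ideal gives $\gr_R(M)\geq\gr(\fa)=g$. Equivalently, from the support description $\gr_R(M)=\inf\{\depth R_\fp\mid\fp\in\Supp_R M\}$ together with the containment $\Supp_R M\subseteq V(\fa)$.

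For the reverse inequality $\gr_R(M)\leq g$, I would exploit the linkage hypothesis. Applying Theorem \ref{MS} in the semiperfect ring $R/\fa$, the horizontal linkage of $M$ over $R/\fa$ forces $M$ to be a (first) syzygy in the category of $R/\fa$-modules; hence there is an embedding $M\hookrightarrow (R/\fa)^n$ for some $n$, and therefore $\Ass_R(M)\subseteq\Ass_R(R/\fa)$. Fix any $\fp\in\Ass_R(M)$; then $\depth_{R_\fp}((R/\fa)_\fp)=0$. Since $\gc$-dimension does not grow under localization, $\gcpd((R/\fa)_\fp)\leq\gkd_R(R/\fa)=g$, while Theorem \ref{G3}(iv) applied in the local ring $R_\fp$ yields
\[
\gcpd((R/\fa)_\fp)=\depth R_\fp-\depth_{R_\fp}((R/\fa)_\fp)=\depth R_\fp.
\]
Hence $\depth R_\fp\leq g$. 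Conversely, $\fp\supseteq\fa$ forces $\depth R_\fp\geq\gr(\fa)=g$ by the support description of grade, so $\depth R_\fp=g$. Since $\fp\in\Supp_R M$, the localization inequality $\gr_R(M)\leq\gr_{R_\fp}(M_\fp)\leq\depth R_\fp$ gives $\gr_R(M)\leq g$, completing the proof.

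The most delicate step is the upper bound: it is precisely there that the two hypotheses both intervene. Horizontal linkage is needed to produce the embedding $M\hookrightarrow(R/\fa)^n$, so that associated primes of $M$ are forced into $\Ass_R(R/\fa)$; and $\gc$-perfection of $\fa$ is needed to make every $\fp\in\Ass_R(R/\fa)$ satisfy $\depth R_\fp=g$, via the Auslander--Bridger formula applied to $(R/\fa)_\fp$. The lower bound, by contrast, is purely formal from the containment $\fa\subseteq\ann_R M$.
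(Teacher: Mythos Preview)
Your proof is correct and takes a genuinely different route from the paper's.

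The paper proves the upper bound $\gr_R(M)\leq g$ by showing directly that $\Ext^g_R(M,C)\neq 0$: using Golod's change-of-rings isomorphism (Lemma \ref{G1}) one has $\Ext^g_R(M,C)\cong\Hom_{R/\fa}(M,K)$ with $K=\Ext^g_R(R/\fa,C)$; if this vanished, a $K$-regular element in $\ann_{R/\fa}(M)$ would also be $R/\fa$-regular (since $K$ is semidualizing over $R/\fa$, so $\Ass(K)=\Ass(R/\fa)$), forcing $\Hom_{R/\fa}(M,R/\fa)=0$ and hence $\lambda_{R/\fa}M$ free, contradicting horizontal linkage. One then concludes via $\gr_R(M)\leq\gr(\ann_R(M),C)\leq g$.

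Your argument bypasses $K$ and the change-of-rings machinery entirely: you extract the syzygy embedding $M\hookrightarrow(R/\fa)^n$ from Theorem \ref{MS}, pass to an associated prime $\fp$ of $M$, and use the Auslander--Bridger formula for $\gc$-dimension locally at $\fp$ (together with the fact that $\gc$-dimension does not increase under localization) to pin down $\depth R_\fp=g$. This is more elementary and more geometric in flavor; it uses only the syzygy half of horizontal linkage. The paper's approach, on the other hand, yields as a byproduct the explicit nonvanishing $\Ext^g_R(M,C)\neq 0$, which is sometimes useful in its own right. Two minor remarks: your argument tacitly assumes $M\neq 0$ (so that $\Ass_R(M)\neq\emptyset$), as does the paper's; and the intermediate inequality $\gr_R(M)\leq\gr_{R_\fp}(M_\fp)$ is unnecessary --- the support description $\gr_R(M)=\inf\{\depth R_\fq:\fq\in\Supp_R M\}$ already gives $\gr_R(M)\leq\depth R_\fp$ directly.
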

\begin{proof}
Set $n=\gr(\fa)$ and $K=\Ext^n_R(R/\fa,C)$. As $\fa\subseteq\ann_R(M)$, $\gr_R(M)\geq n$. By Lemma \ref{G1}, $\Ext^n_R(M,C)\cong\Hom_{R/\fa}(M,K)$. If $\Hom_{R/\fa}(M,K)=0$, then there exists an $K$--regular element $x$ in $\ann_{R/\fa}(M)$ by \cite[Proposition 1.2.3]{BH}. By Theorem \ref{G2}, $K$ is a semidualizing $R/\fa$--module and so $\Ass_{R/\fa}(K)=\Ass_{R/\fa}(R/\fa)$. Hence, $x$ is also an $R/\fa$-regular element and so $\Hom_{R/\fa}(M,R/\fa)=0$ by \cite[Proposition 1.2.3]{BH}. It follows from the exact sequence (\ref{d1}.1) that $\lambda_{R/\fa}M$ is free $R/\fa$--module, which is a contradiction. Therefore, $\Ext^n_R(M,C)\neq0$. As $C$ is a semidualizing $R$--module, every $R$-regular
sequence is also $C$-regular sequence.
Therefore, by \cite[Proposition 1.2.10]{BH}, we have
\[\begin{array}{rl}
\gr_R(M)&=\gr(\ann_R(M),R)\\
&\leq\gr(\ann_R(M),C)\\
&=\inf\{i\geq0\mid\Ext^i_R(M,C)\neq0\}\\
&\leq n.
\end{array}\]
\end{proof}
\begin{thm}\label{t6}
Let $(R,\fm)$ be a local ring, $\fa$ a $\gc$-perfect ideal of $R$ and  $M$, $N$ two $R/\fa$--modules. Set $K=\Ext^{\tiny{\gr(\fa)}}_R(R/\fa,C)$. Assume that $M\underset{\fa}{\thicksim}N$ and that $\underline{\Hom}_{R/\fa}(M,K)=0$. The following statements hold true.
\begin{enumerate}[(i)]
\item{$M$ is $\gc$-perfect if and only if $\Ext^i_{\mathcal{I}_K}(N,R/\fa)=0=\Ext^i_{R/\fa}(N,K)$ for all $i\geq1$;}
\item{If $\gkd_R(M)<\infty$, then $M$ satisfies $\widetilde{S}_n$ if and only if $\Ext^i_{\mathcal{I}_K}(N,R/\fa)=0$ for all $i$, $0<i<n$;}
\item{If $\depth R/\fa\geq 2$ and $M_\fp$ is $\gcp$-perfect for all $\fp\in\Spec(R)-\{\fm\}$, then
$$\Ext^i_{\mathcal{I}_K}(N,R/\fa)\cong\hh^i_\fm(M) \text{ for all } i, 0<i<\depth R/\fa.$$
In particular, if $0<\gkkd_{R/\fa}(M)<\infty$ then $\rgr_{\mathcal{I}_K}(N)=\depth_R(M)$.}
\end{enumerate}
\end{thm}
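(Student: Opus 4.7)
The plan is to reinterpret the entire problem in the quotient ring $R/\fa$ equipped with the semidualizing module $K$, and then invoke Proposition \ref{t3} and Theorem \ref{t4}. By Theorem \ref{G2}(i), $K$ is a semidualizing $R/\fa$-module; the relation $M \underset{\fa}{\thicksim} N$ says precisely that $M$ is horizontally linked as an $R/\fa$-module with $\lambda_{R/\fa}M \cong N$; and the hypothesis $\underline{\Hom}_{R/\fa}(M,K)=0$ is exactly the vanishing required to apply the preceding results in this new setting. The keystone of the translation is the equivalence, under these hypotheses, that $M$ is $\gc$-perfect over $R$ if and only if $\gkkd_{R/\fa}(M)=0$: Lemma \ref{l1} gives $\gr_R(M)=\gr(\fa)$, and Theorem \ref{G2}(ii) gives $\gkd_R(M)=\gr(\fa)+\gkkd_{R/\fa}(M)$ whenever one side is finite.

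For (i), I would apply Proposition \ref{t3}(i) to $M$ as an $R/\fa$-module with semidualizing $K$: this yields $\gkkd_{R/\fa}(M)=0$ iff $\Ext^i_{R/\fa}(M,K)=0=\Ext^i_{\mathcal{I}_K}(N,R/\fa)$ for all $i\geq 1$, and combined with the equivalence above this gives (i). For (ii), the finiteness $\gkd_R(M)<\infty$ forces $\gkkd_{R/\fa}(M)<\infty$ by Theorem \ref{G2}(ii), and this finiteness descends to every localization; Proposition \ref{t3}(ii) applied over $R/\fa$ with $K$ then gives the equivalence of $\widetilde{S}_n$ for $M$ with the desired vanishing. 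For (iii), I would apply Theorem \ref{t4} over $R/\fa$ with $K$: the local version of the equivalence recorded above translates the assumption that $M_\fp$ is $\gcp$-perfect for all $\fp\neq\fm$ into the hypothesis of Theorem \ref{t4}(ii), namely that $M$ has $\gkk$-dimension zero at every non-maximal prime of $R/\fa$. The conclusion $\Ext^i_{\mathcal{I}_K}(N,R/\fa)\cong\hh^i_{\fm/\fa}(M)\cong\hh^i_\fm(M)$ for $0<i<\depth(R/\fa)$ then yields the displayed isomorphism, while the ``in particular'' clause comes from the corresponding part of Theorem \ref{t4}(ii) together with the equality $\depth_R(M)=\depth_{R/\fa}(M)$.

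The main obstacle is controlling the way the Serre condition $\widetilde{S}_n$ and the local $\gc$-perfect conditions behave on passing from $R$ to $R/\fa$. For a module $M$ annihilated by $\fa$ one has $\depth_{R_\fp}(M_\fp)=\depth_{(R/\fa)_{\fp/\fa}}(M_{\fp/\fa})$, but $\widetilde{S}_n$ over $R$ compares this to $\min\{n,\depth R_\fp\}$ whereas the $R/\fa$-version compares it to $\min\{n,\depth(R/\fa)_{\fp/\fa}\}$, and these differ in general. Reconciling the two incarnations of $\widetilde{S}_n$, and in parallel providing the local analogue of the $\gc$-perfect/$\gkk$-dim zero equivalence needed in (iii), requires knowing that the $\gc$-perfect property of $\fa$ localizes appropriately at the relevant primes; verifying this is the main technical point, after which the theorem reduces cleanly to Proposition \ref{t3} and Theorem \ref{t4} via the reduction described above.
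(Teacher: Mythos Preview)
Your approach is essentially identical to the paper's: pass to $R/\fa$ with the semidualizing module $K$ via Theorem~\ref{G2} and Lemma~\ref{l1} (so that $\gc$-perfectness of $M$ over $R$ becomes $\gkkd_{R/\fa}(M)=0$), and then invoke Proposition~\ref{t3} and Theorem~\ref{t4} over $R/\fa$. The paper's proof consists of exactly these two sentences and does not separately address the $\widetilde{S}_n$-over-$R$ versus $\widetilde{S}_n$-over-$R/\fa$ issue you raise; it simply takes the condition as produced by Proposition~\ref{t3} applied over $R/\fa$.
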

\begin{proof}
By Theorem \ref{G2}, $K$ is a semidualizing $R/\fa$-module and $\gkkd_{R/\fa}(M)<\infty$ if and only if $\gkd_R(M)<\infty$. Note that $M$ is $\gc$-perfect $R$--module if and only if $\gkkd_{R/\fa}(M)=0$ by Lemma \ref{l1} and Theorem \ref{G2}(ii). Now the assertion is clear by Proposition \ref{t3} and Theorem \ref{t4}.
\end{proof}
We are now ready to prove Theorem \ref{th1}, which is stated in the introduction.
\begin{proof}[Proof of Theorem \ref{th1}] This is clear by Lemma \ref{l6} and Theorem \ref{t6}.
\end{proof}
\section{Auslander class}
In this section, we study the theory of linkage for modules in the Auslander class.
For two $R$--modules $M$ and $N$, there exists
the following exact sequence
\begin{equation}\label{1.2}
0\longrightarrow \Ext^1_R(\Tr M,N)\longrightarrow
M\underset{R}{\otimes}N\overset{e_M^N}{\longrightarrow}\Hom_R(M^*,N)
\longrightarrow\Ext^2_R(\Tr M,N)\longrightarrow0,
\end{equation}
where $e_M^N:M\underset{R}{\otimes}N\rightarrow \Hom_R(M^*,N)$ is
the evaluation map \cite[Proposition 2.6]{AB}.
\begin{thm}\label{th5}
Let $M$ be an $R$--module.
Assume that $M\in\mathcal{A}_C$ and that $n$ is a positive integer.
Consider the following statements.
\begin{enumerate}[(i)]
\item{$\Ext^i_R(\Tr M,C)=0$ for all $i$, $1\leq i\leq n$;}
\item{$\Ext^i_R(\Tr M,R)=0$ for all $i$, $1\leq i\leq n$;}
\item{$M$ is an $n$-th syzygy $R$--module.}
\end{enumerate}
Then we have the following
\begin{enumerate}[(a)]
\item{(i)$\Leftrightarrow$ (ii) $\Rightarrow$ (iii);}
\item{ If $\gd_{R_\fp}(M_\fp)<\infty$ for all
$\fp\in\X^{n-2}(R)$(e.g. $\id_{R_\fp}(C_\fp)<\infty$ for all
$\fp\in\X^{n-2}(R)$), then all the statements (i)-(iii) are
equivalent.}
\end{enumerate}
\end{thm}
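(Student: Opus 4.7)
The plan is to treat the three implications of Theorem~\ref{th5} separately and to leverage $M \in \mathcal{A}_C$ where needed. The implication (ii)$\Rightarrow$(iii) is classical Auslander--Bridger and requires no Auslander-class hypothesis: by \cite[Theorem 2.17]{AB}, the vanishing $\Ext^i_R(\Tr M,R)=0$ for $1\leq i\leq n$ is precisely the $n$-torsion-freeness of $M$, which forces $M$ to be an $n$-th syzygy.

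The key equivalence (i)$\Leftrightarrow$(ii) rests on the following observation. Taking a resolution $Q_\bullet\to\Tr M\to 0$ by finitely generated projective $R$-modules, the elementary iso $\Hom_R(Q_i,R)\otimes_R C\cong \Hom_R(Q_i,C)$ upgrades to an isomorphism of complexes $\Hom_R(Q_\bullet,R)\otimes_R C\cong\Hom_R(Q_\bullet,C)$. The right-hand side computes $\Ext^i_R(\Tr M,C)$; the left-hand side is a tensor product of a complex with cohomology $\Ext^p_R(\Tr M,R)$ and the module $C$, yielding the associated spectral sequence
\[
\E_2^{p,-s}=\Tor_s^R\bigl(\Ext^p_R(\Tr M,R),\,C\bigr)\ \Longrightarrow\ \Ext^{p-s}_R(\Tr M,C).
\]
To collapse this spectral sequence to the edge isomorphism $\Ext^i_R(\Tr M,C)\cong\Ext^i_R(\Tr M,R)\otimes_R C$, I would use $M\in\mathcal{A}_C$ together with the kernel/cokernel identifications coming from the four-term sequence \eqref{1.2} (namely $\Ext^1_R(\Tr M,R)=\ker(M\to M^{**})$, $\Ext^2_R(\Tr M,R)=\coker(M\to M^{**})$, and $\Ext^i_R(\Tr M,R)\cong\Ext^{i-2}_R(M^*,R)$ for $i\geq 3$), plus the two-out-of-three closure of $\mathcal{A}_C$ (Example~\ref{example1}(i)), to inductively verify that every $\Ext^p_R(\Tr M,R)$ lies in $\mathcal{A}_C$. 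Since $\Supp C=\Spec R$ for any semidualizing $C$, the tensor $\Ext^i_R(\Tr M,R)\otimes_R C$ vanishes iff its source vanishes, giving (i)$\Leftrightarrow$(ii).

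For part~(b), (iii)$\Rightarrow$(i), the plan is to localize at each prime. $M$ being an $n$-th syzygy globally implies $M_\fp$ is an $n$-th syzygy $R_\fp$-module for every $\fp$. For $\fp\in\X^{n-2}(R)$, the hypothesis $\gd_{R_\fp}(M_\fp)<\infty$ combined with the classical Auslander--Bridger theorem \cite[Theorem 4.25]{AB} gives $\Ext^i_{R_\fp}(\Tr M_\fp,R_\fp)=0$ for $1\leq i\leq n$, and then part~(a) applied locally (using $M_\fp\in\mathcal{A}_{C_\fp}$) yields $\Ext^i_{R_\fp}(\Tr M_\fp,C_\fp)=0$. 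For $\fp\notin\X^{n-2}(R)$, i.e.\ $\depth R_\fp\geq n-1$, Theorem~\ref{t1}(a) implies $M_\fp$ satisfies $\widetilde{S}_n$ at $\fp$; combined with $\depth C_\fp=\depth R_\fp\geq n-1$ (as $C$ is semidualizing), this forces $\Ext^i_{R_\fp}(\Tr M_\fp,C_\fp)=0$ in the required range by standard depth estimates on Ext. Local vanishing then yields global $\Ext^i_R(\Tr M,C)=0$.

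The main obstacle is the spectral-sequence collapse step in (i)$\Leftrightarrow$(ii): verifying $\Ext^p_R(\Tr M,R)\in\mathcal{A}_C$ for all $p\geq 0$ from $M\in\mathcal{A}_C$. The low-degree cases are accessible via the kernel/cokernel identifications in \eqref{1.2} and two-out-of-three diagram chasing, but the induction step for $i\geq 3$ reduces to controlling $\Ext^j_R(M^*,R)$, which requires a delicate auxiliary analysis of how the dualization $M\mapsto M^*$ interacts with membership in $\mathcal{A}_C$ — most naturally via Foxby equivalence applied to the dual of a projective resolution of $M$.
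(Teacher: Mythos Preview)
Your spectral-sequence route for (i)$\Leftrightarrow$(ii) has a real gap. To collapse the universal-coefficient spectral sequence
\[
E_2^{p,-s}=\Tor_s^R\bigl(\Ext^p_R(\Tr M,R),\,C\bigr)\ \Longrightarrow\ \Ext^{p-s}_R(\Tr M,C)
\]
you need $\Tor_s^R(\Ext^p_R(\Tr M,R),C)=0$ for all $s>0$ and all $p\geq 0$, and you propose to get this by showing each $\Ext^p_R(\Tr M,R)\in\mathcal{A}_C$. But the Auslander class is \emph{not} known to be stable under $\Hom_R(-,R)$: there is no reason $M\in\mathcal{A}_C$ should force $M^*\in\mathcal{A}_C$, and hence no reason $\Ext^{i}_R(M^*,R)$ (which is $\Ext^{i+2}_R(\Tr M,R)$) should lie in $\mathcal{A}_C$. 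You acknowledge this as ``the main obstacle'' but offer only a vague plan (``Foxby equivalence applied to the dual of a projective resolution''); without it the argument does not go through. Note also that even partial vanishing of the $E_2$ page does not suffice: if $\Ext^i_R(\Tr M,C)=0$ for $1\leq i\leq n$, the edge term $E_2^{1,0}=\Ext^1_R(\Tr M,R)\otimes_R C$ can still be nonzero and get killed by a $d_2$ landing in $\Tor_1^R(\Ext^3_R(\Tr M,R),C)$, so vanishing of the abutment alone tells you nothing about $\Ext^1_R(\Tr M,R)$.

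The paper avoids this entirely by an elementary induction on $n$. For $n=1$, the vanishing $\Ext^1_R(\Tr M,C)=0$ gives an embedding $M\otimes_R C\hookrightarrow C^m$ via \eqref{1.2}; applying $\Hom_R(C,-)$ and using the single isomorphism $M\cong\Hom_R(C,M\otimes_R C)$ from $M\in\mathcal{A}_C$ produces $M\hookrightarrow R^m$, whence $\Ext^1_R(\Tr M,R)=0$. For $n>1$, one takes the universal pushforward $0\to M\to F\to N\to 0$ over $R$ (available since the base case gives torsionlessness); two-out-of-three puts $N\in\mathcal{A}_C$, and \cite[Lemma 3.9]{AB} yields $0\to\Tr N\to P\to\Tr M\to 0$, which shifts the $\Ext$-vanishing down by one and reduces to the inductive hypothesis on $N$. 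The direction (ii)$\Rightarrow$(i) is then quoted from \cite[Theorem 2.12]{DS1}. This bypasses any need to control $M^*$ or higher $\Ext^p_R(\Tr M,R)$ relative to $C$. Your treatment of part~(b) is also underspecified for $\fp\notin X^{n-2}(R)$: ``standard depth estimates on Ext'' is not an argument, and what you actually need is the content of \cite[Theorem 43]{M1}, which the paper simply cites.
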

\begin{proof}
(a).(i)$\Rightarrow$(ii) We argue by induction on $n$. Let $\Ext^1_R(\Tr M,C)=0$. By the exact sequence (\ref{1.2}), we get the following exact sequence
$$0\rightarrow M\otimes_RC\rightarrow\overset{m}{\oplus}C.$$ By applying the functor $\Hom_R(C,-)$ on the above exact sequence, we get the following exact sequence, $0\rightarrow\Hom_R(C,M\otimes_RC)\rightarrow\Hom_R(C,\overset{m}{\oplus}C)$. As $M\in\mathcal{A}_C$, $M\cong\Hom_R(C,M\otimes_R C)$ and so $M$ is a first syzygy $R$--module. Now let $n>1$. By induction hypothesis $\Ext^i_R(\Tr M,R)=0$ for all $i$, $1\leq i\leq n-1$. Consider the following universal pushforward of $M$,
$$0\rightarrow M\rightarrow F\rightarrow N\rightarrow0,$$
where $F$ is free and $\Ext^1_R(N,R)=0$. By \cite[Lemma 3.9]{AB}, we obtain the following exact sequence
$$0\rightarrow\Tr N\rightarrow P\rightarrow\Tr M\rightarrow0,$$
where $P$ is a projective $R$--module. Therefore, $\Ext^i_R(\Tr N,C)=\Ext^{i+1}_R(\Tr M,C)=0$ for all $i$, $1\leq i\leq n-1$. By induction hypothesis $\Ext^{i}_R(\Tr N,R)=0$ for all $i$, $1\leq i\leq n-1$ and so $\Ext^i_R(\Tr M,R)=0$ for all $i$, $1\leq i\leq n$.

(ii)$\Rightarrow$(i). This follows from \cite[Theorem 2.12]{DS1}.

(ii)$\Rightarrow$(iii). This follows from \cite[Proposition 11]{M1}.

(b) Follows from \cite[Theorem 43]{M1}.
\end{proof}
\begin{cor}\label{cor7}
Let $R$ be a semiperfect ring, $M$ a stable $R$--module. Assume that $M\in\mathcal{A}_C$ and that
$n$ is a positive integer. If $\gd_{R_\fp}(M_\fp)<\infty$ for all
$\fp\in\X^{n-2}(R)$(e.g. $\id_{R_\fp}(C_\fp)<\infty$ for all
$\fp\in\X^{n-2}(R)$), then the following are equivalent.
\begin{enumerate}[(i)]
\item{$M$ is an $n$-th syzygy module;}
\item{$M$ is horizontally linked and $\Ext^i_R(\lambda M,C)=0$ for all $i$,
$0<i<n$.}
\end{enumerate}
\end{cor}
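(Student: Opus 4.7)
The plan is to reduce Corollary~\ref{cor7} to Theorem~\ref{th5} by rewriting both halves of condition~(ii) as vanishing statements for $\Ext^{j}_R(\Tr M, C)$. First, since $M$ is stable, Theorem~\ref{MS} identifies horizontal linkage of $M$ with the condition $\Ext^1_R(\Tr M, R)=0$. Because $M\in\ac$, the equivalence (i)$\Leftrightarrow$(ii) of Theorem~\ref{th5} applied at $n=1$ (which requires no hypothesis on $\gd$) further translates this into $\Ext^1_R(\Tr M, C)=0$.

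Next, I would apply $\Hom_R(-,C)$ to the exact sequence~(\ref{d1}.2),
\[
0\lra \lambda M\lra P_1^{\ast}\lra \Tr M\lra 0.
\]
Since $P_1^{\ast}$ is projective, the long exact sequence collapses to natural isomorphisms
\[
\Ext^i_R(\lambda M, C)\cong \Ext^{i+1}_R(\Tr M, C)\quad\text{for every }i\geq 1.
\]
Consequently, the vanishing $\Ext^i_R(\lambda M, C)=0$ for $0<i<n$ is equivalent to $\Ext^j_R(\Tr M, C)=0$ for $2\leq j\leq n$.

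Combining the two translations, condition~(ii) of the corollary becomes the single statement that $\Ext^j_R(\Tr M, C)=0$ for every $1\leq j\leq n$. Under the hypothesis $\gd_{R_\fp}(M_\fp)<\infty$ for $\fp\in\X^{n-2}(R)$, part~(b) of Theorem~\ref{th5} identifies this with $M$ being an $n$-th syzygy, which is condition~(i). The argument is essentially bookkeeping once Theorem~\ref{th5} is accepted, so I do not anticipate a serious obstacle; the deepest input is the implication ``$n$-th syzygy $\Rightarrow$ $\Ext^{j}_R(\Tr M,C)=0$ for $1\le j\le n$'', which already rests on the finite $\gd$ assumption together with the Auslander class membership via the case analysis in Theorem~\ref{th5}(b). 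The only point to watch is that the first reduction (horizontal linkage $\Leftrightarrow \Ext^1_R(\Tr M,C)=0$) uses only part~(a) of Theorem~\ref{th5} at $n=1$, so no extra hypothesis is required to convert linkage into Ext-vanishing.
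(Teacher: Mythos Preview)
Your proposal is correct and follows essentially the same approach as the paper, which simply cites Theorem~\ref{th5} and Theorem~\ref{MS}; you have merely unpacked the bookkeeping (the translation via the exact sequence~(\ref{d1}.2) and the $n=1$ case of Theorem~\ref{th5}(a)) that the paper leaves implicit.
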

\begin{proof}
This is clear by Theorem \ref{th5} and Theorem \ref{MS}.
\end{proof}
Note that every module of finite projective dimension is in the Auslander class with respect to $C$. As a consequence we have the following result.
\begin{cor}
Let $R$ be a semiperfect ring, $M$ a stable $R$--module of finite projective dimension and $n$ a positive integer. The following are equivalent:
\begin{enumerate}[(i)]
\item{$M$ satisfies $\widetilde{S}_n$;}
\item{$M$ is an $n$-th syzygy module;}
\item{$M$ is horizontally linked and $\Ext^i_R(\lambda M,C)=0$ for all $i$, $0<i<n$ and some semidualizing module $C$.}
\end{enumerate}
\end{cor}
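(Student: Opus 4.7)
The plan is to deduce this corollary directly from Theorem~\ref{t1} and Corollary~\ref{cor7}; the hypothesis $\pd_R(M) < \infty$ ensures that every technical assumption in those results holds automatically. Specifically, by Example~\ref{example1}(i), $M$ lies in the Auslander class $\mathcal{A}_C$ for every semidualizing $R$--module $C$, and since projective dimension localizes and bounds G-dimension, $\gd_{R_{\fp}}(M_\fp) < \infty$ for every $\fp \in \Spec R$. In particular, $M$ has finite G-dimension on both $X^{n-1}(R)$ and $X^{n-2}(R)$.

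For (i)$\Leftrightarrow$(ii), I would apply Theorem~\ref{t1} with the trivial semidualizing module $C = R$, so that $\trk M$ becomes $\Tr M$, the notion of ``$n$th $R$-syzygy'' reduces to ordinary $n$th syzygy, and $\gc$-dimension reduces to G-dimension. Part (b) of Theorem~\ref{t1} then applies, since $M$ has finite G-dimension on $X^{n-1}(R)$, giving the equivalence of conditions (ii) and (iii) there, which is exactly (i)$\Leftrightarrow$(ii) of the corollary.

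For (ii)$\Leftrightarrow$(iii), I would invoke Corollary~\ref{cor7}; both of its hypotheses hold for every semidualizing $C$ by the opening observation. Thus (ii)$\Rightarrow$(iii) follows by applying Corollary~\ref{cor7} with $C = R$ to witness the existential claim, and conversely (iii)$\Rightarrow$(ii) follows by applying Corollary~\ref{cor7} with whichever semidualizing $C$ witnesses (iii). No real obstacle remains: finite projective dimension is the precise hypothesis needed to wipe out the technical side-conditions of the earlier results, and the corollary then follows by direct specialization.
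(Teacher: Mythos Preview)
Your proposal is correct and follows essentially the same route as the paper. The paper's proof cites Theorem~\ref{th5}, Theorem~\ref{MS}, and \cite[Theorem 2.12]{DS1}; your use of Corollary~\ref{cor7} simply packages the first two of these, and your appeal to Theorem~\ref{t1} with $C=R$ plays the same role as the external citation for the equivalence (i)$\Leftrightarrow$(ii).
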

\begin{proof}
This is clear by Theorem \ref{th5}, Theorem \ref{MS} and \cite[Theorem 2.12]{DS1}.
\end{proof}
Let $R$ be a Cohen-Macaulay local ring with canonical module $\omega_R$. It is well-known that $\Hom_R(C,\omega_R)$
is a semidualizing $R$--module.
\begin{cor}
Let $R$ be a Cohen-Macaulay local ring with canonical module $\omega_R$, $M$ a stable $R$--module. Set $K:=\Hom_R(C,\omega_R)$. If $\gkd_R(M)<\infty$ and $\id_{R_\fp}(K_\fp)<\infty$ for all $\fp\in X^{n-1}$, then the following are equivalent.
\begin{enumerate}[(i)]
\item{$M$ satisfies $\widetilde{S}_n$;}
\item{$M$ is an $n$-th syzygy module;}
\item{$M$ is horizontally linked and $\Ext^i_R(\lambda M,K)=0$ for all $i$, $0<i<n$.}
\end{enumerate}
\end{cor}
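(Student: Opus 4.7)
The plan is to reduce the corollary to two results already established: Theorem \ref{t1} with the semidualizing module taken to be $R$ itself will give $(i)\Leftrightarrow(ii)$, and Corollary \ref{cor7} with $C$ replaced by $K$ will give $(ii)\Leftrightarrow(iii)$. Both invoked results need a local finite G-dimension hypothesis on $M$, which I will extract from the injective-dimension assumption on $K$.

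First, I would note that $K=\Hom_R(C,\omega_R)$ is a semidualizing $R$--module, so by Example \ref{example1}(iv) the hypothesis $\gkd_R(M)<\infty$ is equivalent to $M\in\mathcal{A}_K$. Next, for each $\fp\in X^{n-1}(R)$, the ring $R_\fp$ is Cohen-Macaulay and $K_\fp$ is a semidualizing $R_\fp$--module of finite injective dimension, hence $K_\fp$ is a canonical module of $R_\fp$. Therefore $\mathcal{A}_{K_\fp}=\mathcal{A}_{\omega_{R_\fp}}$, and applying Example \ref{example1}(ii) to $M_\fp\in\mathcal{A}_{K_\fp}$ yields $\gd_{R_\fp}(M_\fp)<\infty$ for every $\fp\in X^{n-1}(R)$.

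With this local finiteness in hand, $(i)\Leftrightarrow(ii)$ is a direct instance of Theorem \ref{t1} taken with the semidualizing module being $R$: the three conditions there become the vanishing of $\Ext^i_R(\Tr M,R)$ for $1\le i\le n$, the property of being an $n$-th syzygy, and $\widetilde{S}_n$; part (b) applies precisely because $\gd_{R_\fp}(M_\fp)<\infty$ on $X^{n-1}(R)$. The equivalence $(ii)\Leftrightarrow(iii)$ follows from Corollary \ref{cor7} with $C$ replaced by $K$, whose hypotheses $M\in\mathcal{A}_K$ and $\gd_{R_\fp}(M_\fp)<\infty$ on $X^{n-2}(R)\subseteq X^{n-1}(R)$ are both already verified. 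No serious obstacle arises; the only non-routine move is the identification $K_\fp\cong\omega_{R_\fp}$ at the relevant primes, which is what converts the injective-dimension assumption on $K$ into the local G-dimension input required by both Theorem \ref{t1} and Corollary \ref{cor7}.
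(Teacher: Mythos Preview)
Your argument is correct and follows essentially the same route as the paper: both use Example \ref{example1}(iv) to place $M$ in $\mathcal{A}_K$ and then invoke Corollary \ref{cor7} (with $K$ in the role of the semidualizing module) for the equivalence $(ii)\Leftrightarrow(iii)$. For $(i)\Leftrightarrow(ii)$ the paper simply cites \cite[Corollary 2.14]{DS1}, whereas you supply a self-contained argument via Theorem \ref{t1} (with $C=R$) after extracting $\gd_{R_\fp}(M_\fp)<\infty$ on $X^{n-1}(R)$ from the identification $K_\fp\cong\omega_{R_\fp}$ and Example \ref{example1}(ii); this is a harmless and arguably cleaner substitution for the external reference.
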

\begin{proof}
This is an immediate consequence of Corollary \ref{cor7}, Example \ref{example1}(iv) and \cite[Corollary 2.14]{DS1}.
\end{proof}
\begin{thm}\label{t7}
Let $R$ be a semiperfect ring, $M$ a horizontally linked $R$--module of finite $\gc$-dimension and $\lambda M\in\ac$(e.g. $\Ic$-$\id_R(\lambda M)<\infty$). Then the following are equivalent:
\begin{enumerate}[(i)]
\item{$\gd_R(M)=0$;}
\item{$\gd_R(\lambda M)=0$;}
\item{$\lambda M$ satisfies $\widetilde{S}_n$ for some $n>\depth R-\depth_R(M)$;}
\item{$\gkd_R(M)=0$.}
\end{enumerate}
\end{thm}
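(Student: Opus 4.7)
The plan is to prove (i) $\Leftrightarrow$ (iv) and (i) $\Rightarrow$ (ii) $\Rightarrow$ (iii) $\Rightarrow$ (iv). Reduce first to the local case (a semiperfect commutative ring is a finite product of local rings) and set $d=\depth R$, $m=\depth_R(M)$. The pivotal opening observation is that $\Tr M\in\ac$: applying the two-out-of-three property (Example \ref{example1}(i)) to the exact sequence (\ref{d1}.2), $0\to\lambda M\to P_1^*\to\Tr M\to 0$, with $\lambda M\in\ac$ by hypothesis and the projective $P_1^*$ in $\ac$, gives this. Two consequences follow: first, $\underline{\Hom}_R(M,C)\cong\Tor_1^R(\Tr M,C)=0$ via (\ref{def2}.1), so Proposition \ref{t3} and Corollary \ref{cor10} become available; second, Remark \ref{rem2}(ii) produces $\Ext^i_R(\trk M,C)\cong\Ext^i_R(\Tr M,R)$ for $i\geq 0$, and Theorem \ref{th5}(a) applied to $\Tr M$ (noting $\Tr\Tr M\approx M$) gives $\Ext^i_R(M,C)\cong\Ext^i_R(M,R)$ for $i\geq 1$.

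From here the equivalence (iv) $\Leftrightarrow$ (i) is immediate: Theorem \ref{G3}(i) characterizes $\gkd(M)=0$ by the vanishings $\Ext^i_R(M,C)=\Ext^i_R(\trk M,C)=0$ for $i>0$, which by the two isomorphisms above become $\Ext^i_R(M,R)=\Ext^i_R(\Tr M,R)=0$; the sequence (\ref{1.2}) with $N=R$ forces $M\cong M^{**}$, yielding $\gd(M)=0$. The implication (i) $\Rightarrow$ (ii) is the standard fact that Gorenstein dimension is symmetric under transpose on stable modules and is preserved by first syzygy, so $\gd(M)=0$ gives $\gd(\Tr M)=0$ and hence $\gd(\lambda M)=\gd(\Omega\Tr M)=0$. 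Conversely, (ii) $\Rightarrow$ (iii) is the Auslander--Bridger depth formula: $\gd(\lambda M)=0$ forces $\depth_{R_\fp}((\lambda M)_\fp)=\depth R_\fp$ at every prime, so $\widetilde{S}_n$ holds vacuously for every $n$.

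The main step is (iii) $\Rightarrow$ (iv). Since $\lambda M$ has finite $\gc$-dimension (inherited from $M$ via (\ref{d1}.2)) and satisfies $\widetilde{S}_n$, Theorem \ref{t1}(b) yields $\Ext^i_R(\trk\lambda M,C)=0$ for $1\leq i\leq n$. Iterating the $\ac$-propagation through the sequence $0\to M\to Q_1^*\to\Tr\lambda M\to 0$ (which is (\ref{d1}.2) for $\lambda M$, using $\lambda(\lambda M)=M$) yields $\Tr\lambda M\in\ac$; Remark \ref{rem2}(ii) then converts the vanishing to $\Ext^i_R(\Tr\lambda M,R)=0$ for $1\leq i\leq n$. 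The same syzygy sequence gives $\Ext^i_R(M,R)=0$ for $1\leq i\leq n-1$, and by the Ext isomorphism of the setup $\Ext^i_R(M,C)=0$ in the same range. Since $n-1\geq d-m=\gkd(M)$ by hypothesis (using Theorem \ref{G3}(iv) for the depth formula), Theorem \ref{G3}(iii) forces $\gkd(M)=0$.

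The most delicate point will be the Auslander-class propagation needed to establish $\Tr\lambda M\in\ac$ (equivalently $M\in\ac$); starting from $\lambda M\in\ac$, the sequence (\ref{d1}.1) for $M$ delivers $M^*\in\ac$ at once, but descending from there to $M$ itself requires threading the four linkage sequences (\ref{d1}.1) and (\ref{d1}.2) for both $M$ and $\lambda M$ together with the horizontal linkage identity $\lambda^2 M=M$ and the finiteness of $\gkd(M)$.
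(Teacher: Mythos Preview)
Your setup and the implications other than (iii)$\Rightarrow$(iv) are correct and close to the paper's argument (which routes (i)$\Leftrightarrow$(ii) via \cite[Theorem~1]{MS}, then (ii)$\Rightarrow$(iii)$\Rightarrow$(iv)$\Rightarrow$(i)). One imprecision: Theorem~\ref{th5}(a) applied to $\Tr M$ does not give an \emph{isomorphism} $\Ext^i_R(M,C)\cong\Ext^i_R(M,R)$; it gives, for each $n$, the equivalence of vanishing $\Ext^i_R(M,C)=0$ for $1\le i\le n$ $\Leftrightarrow$ $\Ext^i_R(M,R)=0$ for $1\le i\le n$. That is all you actually use, so this is harmless.

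The genuine gap is in (iii)$\Rightarrow$(iv). Your route requires $\Tr\lambda M\in\ac$, equivalently $M\in\ac$ (via the sequence $0\to M\to Q_1^*\to\Tr\lambda M\to 0$). You flag this as ``delicate'' but do not prove it, and it does not follow from the hypotheses: starting from $\lambda M\in\ac$, two-out-of-three gives $\Tr M\in\ac$ and $M^*\in\ac$, but no available short exact sequence places $M$ between two modules already known to lie in $\ac$. The sequence $0\to(\lambda M)^*\to Q_0^*\to M\to 0$ would do it if $(\lambda M)^*\in\ac$, but $\Hom_R(-,R)$ does not preserve $\ac$ in general; and $\gkd_R(M)<\infty$ only places $M$ in $\mathcal{A}_{C^{\dagger}}$ over a Cohen--Macaulay ring with canonical module (Example~\ref{example1}(iv)), which is a different Auslander class.

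The paper avoids this entirely. For (iii)$\Rightarrow$(iv) it invokes \cite[Theorem~4.6]{DS1} to obtain $\rgr(M,C)\geq n$ directly from the $\widetilde{S}_n$ condition on $\lambda M$ together with $\lambda M\in\ac$; then if $\gkd_R(M)>0$ one has $n\le\rgr(M,C)\le\gkd_R(M)=\depth R-\depth_R(M)$, contradicting the hypothesis. That cited result supplies exactly the bridge your argument needs, without ever requiring $M\in\ac$.
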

\begin{proof}
The equivalence of $(i)$ and $(ii)$ follows from \cite[Theorem 1]{MS}.

(ii)$\Rightarrow$(iii). This is clear by Theorem \ref{G3}.

(iii)$\Rightarrow$(iv). By \cite[Theorem 4.6]{DS1}, $\rgr(M,C)\geq n$. Now
assume contrarily that $\gkd_R(M)>0$. Hence we obtain the following inequality from Theorem
\ref{G3}:
$$n\leq\rgr(M,C)\leq\gkd_R(M)=\depth R-\depth_R(M),$$
which is a contradiction.

(iv)$\Rightarrow$(i). By Example \ref{example1}(i), $\Tr M\in\ac$ and so $\Ext^i_R(\Tr M,R)\cong\Ext^i_R(\trk M,C)=0$ for all $i>0$ by Remark \ref{rem2}(ii) and Theorem \ref{G3}(i).
As $\gkd_R(M)=0$, $\Ext^i_R(M,C)=0$ for all $i>0$ by Theorem \ref{G3}. It follows from Theorem \ref{th5} that $\Ext^i_R(M,R)=0$ for all $i>0$. Hence $\gd_R(M)=0$.
\end{proof}
\begin{cor}
Let $R$ be a local ring, $\fc$ a $\gc$-perfect ideal of $R$ and  $M$, $N$ two $R/\fc$--modules. Set $K=\Ext^{\gr(\fc)}_R(R/\fc,C)$. Assume that $M\underset{\fc}{\thicksim}N$ and that $N\in\mathcal{A}_K$. If $\gkd_R(M)<\infty$, then the following statements hold true.
\begin{enumerate}[(i)]
\item{$\gd_{R/\fc}(M)=0$;}
\item{$\gd_{R/\fc}(N)=0$;}
\item{$N$ satisfies $\widetilde{S}_n$ for some $n>\depth R/\fc-\depth_{R/\fc}(M)$;}
\item{$M$ is $\gc$-perfect $R$--module.}
\end{enumerate}
\end{cor}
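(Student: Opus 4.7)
The plan is to apply Theorem \ref{t7} to the quotient ring $R/\fc$ together with the semidualizing module $K=\Ext^{\gr(\fc)}_R(R/\fc,C)$, and then translate the conclusion about $\gkkd_{R/\fc}(M)=0$ into the $\gc$-perfect condition over $R$. The listed statements are to be understood as equivalent, exactly parallel to the four-term equivalence in Theorem \ref{t7}.

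First I would verify that the hypotheses of Theorem \ref{t7} are met, with the ambient ring taken to be $R/\fc$ and the semidualizing module taken to be $K$. Since $\fc$ is $\gc$-perfect, $K$ is a semidualizing $R/\fc$-module by Theorem \ref{G2}(i). Since $M\underset{\fc}{\thicksim}N$, by \cite[Definition 4]{MS} the modules $M$ and $N$ are horizontally linked over $R/\fc$, so in particular $M$ is horizontally linked over $R/\fc$ with $\lambda_{R/\fc}M\cong N$. The assumption $N\in\mathcal{A}_K$ thus gives $\lambda_{R/\fc}M\in\mathcal{A}_K$. Finally, the hypothesis $\gkd_R(M)<\infty$ together with Theorem \ref{G2}(ii) yields $\gkkd_{R/\fc}(M)<\infty$, so $M$ has finite $\mathsf{G}_K$-dimension over $R/\fc$.

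With these verifications in place, Theorem \ref{t7} applied over $R/\fc$ with semidualizing module $K$ produces the equivalence of the three conditions:
\begin{enumerate}[(a)]
\item $\gd_{R/\fc}(M)=0$;
\item $\gd_{R/\fc}(\lambda_{R/\fc}M)=\gd_{R/\fc}(N)=0$;
\item $N$ satisfies $\widetilde{S}_n$ for some $n>\depth(R/\fc)-\depth_{R/\fc}(M)$;
\item $\gkkd_{R/\fc}(M)=0$.
\end{enumerate}
This is exactly the equivalence of (i), (ii) and (iii) in the corollary, with the fourth condition being $\gkkd_{R/\fc}(M)=0$.

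It remains to identify $\gkkd_{R/\fc}(M)=0$ with $M$ being $\gc$-perfect as an $R$-module. Since $M$ is horizontally linked over $R/\fc$, Lemma \ref{l1} gives $\gr_R(M)=\gr(\fc)$. On the other hand, Theorem \ref{G2}(ii) yields $\gkd_R(M)=\gr(\fc)+\gkkd_{R/\fc}(M)$. Combining these two equalities, the defining identity $\gr_R(M)=\gkd_R(M)$ for $\gc$-perfection is equivalent to $\gkkd_{R/\fc}(M)=0$. This closes the loop between (iv) and the remaining conditions. The only delicate point — and what I would treat carefully — is checking that Lemma \ref{l1} really applies here, which requires that $M$ be horizontally linked over $R/\fc$ (not merely stably equivalent); this is guaranteed by the definition of $M\underset{\fc}{\thicksim}N$.
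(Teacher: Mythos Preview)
Your proposal is correct and follows exactly the paper's own argument: verify via Theorem~\ref{G2} that $K$ is semidualizing over $R/\fc$ and that $\gkkd_{R/\fc}(M)<\infty$, identify $\gc$-perfection of $M$ with $\gkkd_{R/\fc}(M)=0$ using Lemma~\ref{l1} and Theorem~\ref{G2}(ii), and then apply Theorem~\ref{t7} over $R/\fc$. The paper's proof is simply a terser version of what you wrote.
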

\begin{proof}
By Theorem \ref{G2}, $K$ is a semidualizing $R/\fa$-module and $\gkkd_{R/\fa}(M)<\infty$. Note that $M$ is $\gc$-perfect $R$--module if and only if $\gkkd_{R/\fa}(M)=0$ by Lemma \ref{l1} and Theorem \ref{G2}(ii). Now the assertion is clear by Theorem \ref{t7}.
\end{proof}
\section{Serre condition and vanishing of local cohomology}
In this section we study the connection of the Serre condition $(S_n)$ on a horizontally linked $R$--module of finite $\gc$-dimension with the vanishing of the local cohomology groups of its linked module. First we bring the following Lemma which is a generalization of \cite[Proposition 2.6]{AB}.
\begin{lem}\label{l5}
Let $M$ and $N$ be $R$--modules. If $N\in\bc$, then there is a natural exact sequence
$$0\rightarrow\Ext^1_R(\trk M,N)\rightarrow M\otimes_R\Hom_R(C,N)\rightarrow\Hom_R(M^\triangledown,N)\rightarrow\Ext^2_R(\trk M,N)\rightarrow0.$$
\end{lem}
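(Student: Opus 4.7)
The plan is to adapt the proof of the classical Auslander--Bridger sequence (\ref{1.2}), substituting the $C$-dual $(-)^{\triangledown} = \Hom_R(-,C)$ for the ordinary dual and exploiting the hypothesis $N \in \bc$ to supply the needed $\Ext$-vanishings. I would start with a finite projective presentation $P_1 \xrightarrow{f} P_0 \to M \to 0$ and dualize by $(-)^{\triangledown}$ to obtain the four-term exact sequence
$$0 \to M^{\triangledown} \to P_0^{\triangledown} \xrightarrow{f^{\triangledown}} P_1^{\triangledown} \to \trk M \to 0$$
from Definition \ref{d3}. Setting $Z = \im f^{\triangledown}$, I break this into the short exact sequences $(*)\colon\ 0 \to M^{\triangledown} \to P_0^{\triangledown} \to Z \to 0$ and $(**)\colon\ 0 \to Z \to P_1^{\triangledown} \to \trk M \to 0$.

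Next, I would apply $\Hom_R(-,N)$ to $(*)$ and $(**)$. Since each $P_j$ is finitely generated free, $P_j^{\triangledown}$ is a finite direct sum of copies of $C$; the Bass class hypothesis gives $\Ext^i_R(C,N)=0$ for $i>0$, hence $\Ext^i_R(P_j^{\triangledown},N)=0$ for $i>0$. The long exact sequence from $(**)$ then presents $\Ext^1_R(\trk M,N)$ as the cokernel of a map $\alpha\colon \Hom_R(P_1^{\triangledown},N) \to \Hom_R(Z,N)$ and gives $\Ext^1_R(Z,N) \cong \Ext^2_R(\trk M,N)$. From $(*)$ I obtain the exact sequence
$$0 \to \Hom_R(Z,N) \xrightarrow{\beta} \Hom_R(P_0^{\triangledown},N) \to \Hom_R(M^{\triangledown},N) \to \Ext^1_R(Z,N) \to 0.$$

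The crucial identification is the natural isomorphism $\Hom_R(P^{\triangledown},N) \cong P \otimes_R \Hom_R(C,N)$ for finitely generated free $P$, which follows from $P^{\triangledown} \cong C^n$ and $\Hom_R(C^n,N) \cong \Hom_R(C,N)^n$. Tensoring the presentation of $M$ with $\Hom_R(C,N)$ then realizes $M \otimes_R \Hom_R(C,N)$ as the cokernel of the composite $\gamma = \beta \circ \alpha$. Combining this with the short exact sequence $0 \to \coker(\alpha) \to \coker(\gamma) \to \coker(\beta) \to 0$ and splicing with the identifications above yields the desired four-term sequence.

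The main obstacle I anticipate is verifying naturality: I must check that the isomorphism $\Hom_R(P_j^{\triangledown},N) \cong P_j \otimes_R \Hom_R(C,N)$ intertwines the map induced by $f^{\triangledown}$ with the map $f \otimes \id$ induced by $f$, so that $\gamma$ genuinely presents $M \otimes_R \Hom_R(C,N)$. This is a routine but essential diagram chase; once settled, the remainder of the argument is pure splicing of long exact sequences.
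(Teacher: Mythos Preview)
Your proposal is correct and follows essentially the same approach as the paper: both take a projective presentation of $M$, dualize by $(-)^{\triangledown}$, split the resulting four-term sequence into two short exact sequences (your $Z$ is the paper's $L$), apply $\Hom_R(-,N)$ using the Bass-class vanishing $\Ext^i_R(C,N)=0$, and invoke the natural isomorphism $\Hom_R(P^{\triangledown},N)\cong P\otimes_R\Hom_R(C,N)$ for finitely generated projective $P$. The only organizational difference is in the final assembly: the paper sets up a commutative ladder and reads off $\ker\theta$ and $\coker\theta$ via the Snake Lemma, whereas you splice the short exact sequence $0\to\coker(\alpha)\to\coker(\gamma)\to\coker(\beta)\to0$ (valid because $\beta$ is injective) with the tail $0\to\coker(\beta)\to\Hom_R(M^{\triangledown},N)\to\Ext^1_R(Z,N)\to0$; these are equivalent arguments.
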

\begin{proof}
Let $P_1\overset{g}{\rightarrow} P_0\overset{f}{\rightarrow}M\rightarrow0$ be a projective presentation of $M$. By applying the functor $(-)^{\triangledown}$ on the projective presentation of $M$, we obtain the following exact sequences:
\begin{equation}\tag{\ref{l5}.1}
0\longrightarrow M^{\triangledown}\overset{f^{\triangledown}}{\longrightarrow} (P_0)^{\triangledown}\overset{{\alpha}_1}\longrightarrow L\longrightarrow0,
\end{equation}
\begin{equation}\tag{\ref{l5}.2}
0\longrightarrow L\overset{{\alpha}_2}{\longrightarrow} (P_1)^{\triangledown}\longrightarrow \trk M\longrightarrow0,
\end{equation}
where $L=\coker(f^{\triangledown})$ and $\alpha_2\circ\alpha_1=g^{\triangledown}$.
As $N\in\bc$, $\Ext^i_R(C,N)=0$ for all $i>0$.
By applying the functor $(-)^{\dagger}:=\Hom_R(-,N)$ on the exact sequences (\ref{l5}.1) and (\ref{l5}.2), we obtain the following exact sequences:
\begin{equation}\tag{\ref{l5}.3}
0\rightarrow\Hom_R(L,N)\overset{{\alpha_1}^{\dagger}}{\rightarrow}\Hom_R((P_0)^{\triangledown},N)\overset{{f^{\triangledown}}^{\dagger}}{\rightarrow}\Hom_R(M^{\triangledown},N)\rightarrow\Ext^1_R(L,N)\rightarrow0
\end{equation}
\begin{equation}\tag{\ref{l5}.4}
0\rightarrow\Hom_R(\trk M,N)\rightarrow\Hom_R((P_1)^{\triangledown},N)\overset{{\alpha_2}^{\dagger}}{\rightarrow}\Hom_R(L,N)\rightarrow\Ext^1_R(\trk M,N)\rightarrow0
\end{equation}
Note that the homomorphism evaluation $\theta_i:P_i\otimes_R\Hom_R(C,N)\rightarrow\Hom_R((P_i)^{\triangledown},N)$ is an isomorphism for $i=0,1$.
Consider the following commutative diagram with exact rows:
$$\begin{CD}
P_1\otimes_R\Hom_R(C,N)@>\varphi>>P_0\otimes_R\Hom_R(C,N)@>\psi>>M\otimes_R\Hom_R(C,N)\rightarrow0  \\
@VV\beta V@VV\theta_0V@VV\theta V\\
0\rightarrow\Hom_R(L,N)@>(\alpha_1)^{\dagger}>>\Hom_R((P_0)^{\triangledown},N)@>{f^{\triangledown}}^{\dagger}>>\Hom_R((M)^{\triangledown},N)\\
\end{CD}
$$
where $\beta={\alpha_2}^{\dagger}\circ\theta_1$. By the exact sequence (\ref{l5}.4), $\coker(\beta)=\coker({\alpha_2}^{\dagger})\cong\Ext^1_R(\trk M,N)$. Now the Snake Lemma implies that $\ker(\theta)\cong\coker(\beta)\cong\Ext^1_R(\trk M,N)$.

Since $\psi$ is surjective and $\theta_0$ is an isomorphism, we get $\im({f^{\triangledown}}^{\dagger})=\im(\theta)$ and so $\coker(\theta)=\coker({f^{\triangledown}}^{\dagger})\cong\Ext^1_R(L,N)$ by the exact sequence (\ref{l5}.3).
By applying the functor $\Hom_R(-,N)$ on the exact sequence (\ref{l5}.2), we obtain the isomorphism $\Ext^1_R(L,N)\cong\Ext^2_R(\trk M,N)$.
\end{proof}
\begin{thm}\label{t5}
Let $R$ be a Cohen-Macaulay local ring with canonical module $\omega_R$.
For an $R$--module $M$ of finite $\gc$-dimension, the following are equivalent:
\begin{enumerate}[(i)]
\item{$M$ satisfies $\widetilde{S}_n$;}
\item{$M$ is an $n$th $\omega_R$-syzygy module;}
\item{$M$ is an $n$th $C$-syzygy module;}
\item{$\Ext^i_R(\trk M,\omega_R)=0$ for all $i$, $1\leq i\leq n$;}
\item{$\Ext^i_R(\trk M,C)=0$ for all $i$, $1\leq i\leq n$;}
\item{$\Ext^i_R(\tro M,\omega_R)=0$ for all $i$, $1\leq i\leq n$.}
\end{enumerate}
\end{thm}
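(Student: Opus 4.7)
The idea is to bundle the six conditions into trios obtainable from Theorem~\ref{t1} applied to two different semidualizing modules, then bring condition (iv) into the chain through a Hom-tensor adjunction.

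First I would apply Theorem~\ref{t1} directly to $M$ with the given semidualizing module $C$. Since $\gkd_R(M)<\infty$, the hypothesis of part (b) is satisfied, and so (i), (iii), and (v) are equivalent. Next I apply Theorem~\ref{t1} a second time, but now with the canonical module $\omega_R$ in the role of $C$. This is legitimate because, over a Cohen-Macaulay local ring admitting a canonical module, every finitely generated module has finite $\gomegad$-dimension (one of the characterizations of $\omega_R$ recorded by \cite[Proposition~1.3]{Ge}). Hence (i), (ii), and (vi) are equivalent. Only condition (iv) is not yet matched.

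To bring (iv) into the loop, I would use the stable isomorphism $\trk M\cong \Tr M\otimes_R C$ of Remark~\ref{remark3}(i). Since any semidualizing module over a Cohen-Macaulay local ring is maximal Cohen-Macaulay, $\Ext^j_R(C,\omega_R)=0$ for $j>0$, hence $\R\Hom_R(C,\omega_R)\simeq C^{\dagger}$ in the derived category. Hom-tensor adjunction then yields
\begin{equation*}
\R\Hom_R(\trk M,\omega_R)\simeq \R\Hom_R(\Tr M\tensor C,\omega_R)\simeq \R\Hom_R(\Tr M,C^{\dagger}),
\end{equation*}
and a convergent spectral sequence $\E^{p,q}_2=\Ext^p_R(\Tor^R_q(\Tr M,C),\omega_R)\Rightarrow \Ext^{p+q}_R(\Tr M,C^{\dagger})$ whose base row consists of the Ext groups in (iv). Combining this with a third application of Theorem~\ref{t1}, now with the semidualizing module $C^{\dagger}$ (noting that $M\in\mathcal{A}_{C^{\dagger}}$ by Example~\ref{example1}(iv)), would identify the vanishing in (iv) with the Serre condition (i), closing the circle.

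The main obstacle I foresee is handling the Tor correction terms $\Tor^R_q(\Tr M,C)$ for $q\ge 1$, the lowest of which equals $\underline{\Hom}_R(M,C)$ by (\ref{def2}.1) and in general need not vanish. To push the spectral-sequence argument through, I would use $M\in\mathcal{A}_{C^{\dagger}}$ and the tensor-Hom equivalence between $\mathcal{A}_{C^{\dagger}}$ and $\bc$ to bound the cohomological degrees in which these terms contribute. A more elementary alternative, which may in fact be the cleaner route, is to invoke Lemma~\ref{l5} with $N=\omega_R\in\bc$ (producing an exact sequence that tethers $\Ext^1_R(\trk M,\omega_R)$ and $\Ext^2_R(\trk M,\omega_R)$ to $M\otimes_R C^{\dagger}$ and $\Hom_R(M^{\triangledown},\omega_R)$) and perform an induction on $n$ using successive universal pushforwards of $M$ with respect to $C$; the base case $n=1$ then reduces to Remark~\ref{rem2}(i).
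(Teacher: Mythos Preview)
Your plan is essentially the paper's proof: two applications of Theorem~\ref{t1} (with $C$ and with $\omega_R$) give the trios (i)$\Leftrightarrow$(iii)$\Leftrightarrow$(v) and (i)$\Leftrightarrow$(ii)$\Leftrightarrow$(vi) exactly as you describe, and your ``alternative'' route---Lemma~\ref{l5} with $N=\omega_R\in\bc$ plus induction on $n$ via successive universal pushforwards of $M$ with respect to $C$---is precisely how the paper handles (i)$\Leftrightarrow$(iv) (the spectral-sequence approach is not used).

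One correction: the base case $n=1$ does not reduce to Remark~\ref{rem2}(i), which concerns $\Ext^1_R(\trk M,C)$ and relies on a horizontal-linkage hypothesis that is absent here. Instead, Lemma~\ref{l5} already gives the injection $\Ext^1_R(\trk M,\omega_R)\hookrightarrow M\otimes_R C^{\dagger}$; since $M\in\mathcal{A}_{C^{\dagger}}$ by Example~\ref{example1}(iv), Lemma~\ref{lem2} transfers $\widetilde S_1$ from $M$ to $M\otimes_R C^{\dagger}$, so any associated prime of $\Ext^1_R(\trk M,\omega_R)$ lies in $X^0(R)$, where $(\omega_R)_\fp$ is injective and the Ext localizes to zero. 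For the converse (iv)$\Rightarrow$(i) the paper does not induct but directly reads off from Lemma~\ref{l5} and the dualized resolution of $M^{\triangledown}$ that $M\otimes_R C^{\dagger}$ is an $n$th $\omega_R$-syzygy, then passes back to $M$ via Lemma~\ref{lem2}.
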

\begin{proof}
The equivalence of (i), (iii) and (v) follows from Theorem \ref{t1}.

(iv)$\Rightarrow$(i). First we prove that $M\otimes_R\Hom_R(C,\omega_R)$ is an $n$th $\omega_R$-syzygy module. Applying $(-)^{\dag}:=\Hom_R(-,\omega_R)$
to a projective resolution $\cdots\rightarrow
P_{n-1}\rightarrow\cdots\rightarrow P_0\rightarrow
M^{\triangledown}\rightarrow0$ of $M^{\triangledown}=\Hom_R(M,C)$, implies the following
complex
\begin{equation}\tag{\ref{t5}.1}
0\rightarrow (M^{\triangledown})^{\dag}\rightarrow (P_0)^{\dag}\rightarrow\cdots\rightarrow
(P_{n-2})^{\dag}\rightarrow(P_{n-1})^{\dag}.
\end{equation}
By Example \ref{example1}(i), $\omega_R\in\bc$. Note by Lemma \ref{l5}
that $M\otimes_RC^{\dag}$ is embedded in
$(M^{\triangledown})^{\dag}$ if $n=1$ and $M\otimes_RC^{\dag}\cong
(M^{\triangledown})^{\dag}$ if $n>1$. Therefore $M\otimes_RC^{\dag}$ is
$1$st $\omega_R$--syzygy if $n=1$ and, $M\otimes_RC^{\dag}$ is $2$nd $\omega_R$-syzygy for $n>1$. Assume that
$n>2$. As $\Ext^i_R(C,\omega_R)=0$ for all $i>0$,  $\Ext^i_R(\trk
M,\omega_R)\cong\Ext^{i-2}_R(M^\triangledown,\omega_R)$ for all $i>2$,
by the exact sequence (\ref{d3}.1). Therefore the complex (\ref{t5}.1) is
exact, i.e. $M\otimes_RC^{\dag}$ is an $n$th $\omega_R$-syzygy. By Theorem \ref{t1}, $M\otimes_RC^{\dag}$ satisfies $\widetilde{S}_n$. As $\gkd_R(M)<\infty$, $M\in\mathcal{A}_{C^{\dag}}$ by Example \ref{example1}(iv). Hence $M$ satisfies $\widetilde{S}_n$ by Lemma \ref{lem2}.

(i)$\Rightarrow$(iv).
We argue by induction on $n$. If $n=1$ then it is enough to
show that $\Ass_R(\Ext^1_R(\trk M,\omega_R))=\emptyset$. Assume contrarily
that $\fp\in\Ass_R(\Ext^1_R(\trk M,\omega_R))$. By Lemma \ref{l5}, $\depth_{R_\fp}(M\otimes_RC^{\dagger})_\fp=0$. As $\gkd_R(M)<\infty$, $M\in\mathcal{A}_{C^{\dagger}}$ by Example \ref{example1}(iv) and so $\depth_{R_\fp}(M_\fp)=0$ by Lemma \ref{lem2}. As $M$ satisfies $\widetilde{S}_1$, we conclude that $\fp\in X^{0}(R)$. Hence $\Ext^1_R(\trk
M,\omega_R)_\fp=0$, which is a contradiction.

Now, let $n>1$. By induction hypothesis, $\Ext^i_R(\trk M,\omega_R)=0$ for all $i$, $1\leq i\leq n-1$. By (v), $\Ext^1_R(\trk M, C)=0$. As noted, after Lemma \ref{l4},
there is a universal
pushforward
\begin{equation}\tag{\ref{t5}.1}
0\longrightarrow M\longrightarrow C^m\longrightarrow
N\longrightarrow0.
\end{equation}
of $M$ with respect to $C$. As $\Ext^1_R(N,C)=0$, the exact sequence
(\ref{t5}.1) implies the exact sequence
\begin{equation}\tag{\ref{t5}.2}
0\longrightarrow\trk N\longrightarrow C^m\longrightarrow\trk M\longrightarrow0,
\end{equation}
by \cite[Lemma 2.2]{DS1}. Note that $N$ has finite
$\gc$--dimension. As $M$ satisfies $\widetilde{S}_n$
and $\Ext^1_R(N,C)=0$, it is easy to see that $N$ satisfies
$\widetilde{S}_{n-1}$. Induction hypothesis on $N$, gives that
$\Ext^i_R(\trk N,\omega_R)=0$ for all $i$, $1\leq i\leq n-1$. Finally, the
exact sequence (\ref{t5}.2) implies that $\Ext^i_R(\trk M,\omega_R)=0$ for
all $i$, $1\leq i\leq n$.

Note that $\gomegad_R(M)<\infty$. Hence, the equivalence of (i), (ii) and (vi) follows from Theorem \ref{t1}.
\end{proof}
As a consequence, we can generalize \cite[Theorem 4.2]{DS} as follows.
\begin{cor}\label{cor2}
Let $(R,\fm)$ be a Cohen-Macaulay local ring of dimension $d$. Assume that $M$ is a stable $R$--module of finite $\gc$-dimension such that $\underline{\Hom}_R(M,C)=0$, then the following are equivalent:
\begin{enumerate}[(i)]
\item{$M$ satisfies $\widetilde{S}_n$;}
\item{$M$ is horizontally linked and $\hh^{i}_{\fm}(\lambda M\otimes_RC)=0$ for all $i$, $d-n<i<d$.}
\end{enumerate}
\end{cor}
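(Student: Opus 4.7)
The plan is to combine Theorem~\ref{t5} with local duality on $R$, pivoting through the short exact sequence supplied by Remark~\ref{remark3}(ii). By Theorem~\ref{t5}, condition (i) translates into the vanishing $\Ext^i_R(\trk M,\omega_R)=0$ for $1\leq i\leq n$. I will split this vanishing into the piece $i=1$, which should record horizontal linkage of $M$, and the piece $2\leq i\leq n$, which should record the local cohomology vanishing in (ii).

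Since $\underline{\Hom}_R(M,C)=0$ coincides with $\Tor_1^R(\Tr M,C)$ by the isomorphism (\ref{def2}.1), Remark~\ref{remark3}(ii) yields the short exact sequence
\[
0\longrightarrow \lambda M\otimes_RC\longrightarrow \bigoplus C\longrightarrow \trk M\longrightarrow 0.
\]
Because $\id_R(\omega_R)<\infty$ we have $\omega_R\in\bc$ by Example~\ref{example1}(i), and in particular $\Ext^i_R(C,\omega_R)=0$ for every $i>0$. Applying $\Hom_R(-,\omega_R)$ to the displayed sequence therefore produces isomorphisms
\[
\Ext^{i+1}_R(\trk M,\omega_R)\cong \Ext^i_R(\lambda M\otimes_RC,\omega_R)\qquad\text{for all }i\geq 1.
\]
Local duality over the Cohen--Macaulay ring $R$ identifies $\Ext^j_R(N,\omega_R)$ with the Matlis dual of $\hh^{d-j}_\fm(N)$ for any finitely generated $N$. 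Hence the vanishing of $\Ext^j_R(\trk M,\omega_R)$ for $2\leq j\leq n$ translates precisely to the vanishing of $\hh^i_\fm(\lambda M\otimes_RC)$ for $d-n<i<d$.

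It remains to identify the $i=1$ piece with horizontal linkage. By Theorem~\ref{t5}, the conditions $\Ext^1_R(\trk M,\omega_R)=0$ and $\Ext^1_R(\trk M,C)=0$ are each equivalent to $M$ satisfying $\widetilde{S}_1$, hence to each other. The spectral sequence of Remark~\ref{rem2}(i), combined with $\Tor_1^R(\Tr M,C)=0$, collapses its low-degree five-term sequence to the isomorphism $\Ext^1_R(\trk M,C)\cong\Ext^1_R(\Tr M,R)$. Since $M$ is stable, Theorem~\ref{MS} now gives that $M$ is horizontally linked if and only if $\Ext^1_R(\Tr M,R)=0$, i.e.\ if and only if $\Ext^1_R(\trk M,\omega_R)=0$. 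Putting the three steps together yields the equivalence of (i) and (ii).

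The main obstacle is the $i=1$ step: to connect $\Ext^1_R(\trk M,\omega_R)=0$ with horizontal linkage one must swap the test modules $C$ and $\omega_R$. This swap is exactly what Theorem~\ref{t5} delivers; without it, the hypothesis $\underline{\Hom}_R(M,C)=0$ could not be fed through the spectral sequence to obtain the clean identification with $\Ext^1_R(\Tr M,R)$ needed by Theorem~\ref{MS}.
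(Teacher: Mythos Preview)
Your argument is correct and follows the same overall strategy as the paper: translate (i) via Theorem~\ref{t5} into the vanishing of $\Ext^i_R(\trk M,\omega_R)$ for $1\le i\le n$, split off the $i=1$ term as horizontal linkage, and handle $i\ge 2$ through the short exact sequence coming from Remark~\ref{remark3}(ii) together with local duality. One omission worth fixing: a Cohen--Macaulay local ring need not admit a canonical module, so before you can invoke $\omega_R$, Theorem~\ref{t5}, or local duality you should pass to the completion, as the paper does in its opening sentence; all of the hypotheses and the conditions in (i) and (ii) are stable under completion, so this reduction is harmless.

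The one place your route genuinely differs from the paper's is the $i=1$ step. The paper runs a Grothendieck spectral sequence with target $\omega_R$ to obtain $\Ext^1_R(\trk M,\omega_R)\cong\Ext^1_R(\Tr M,C^{\dag})$, where $C^{\dag}=\Hom_R(C,\omega_R)$, and then uses $M\in\mathcal{A}_{C^{\dag}}$ (Example~\ref{example1}(iv)) together with Theorem~\ref{th5} to identify $\Ext^1_R(\Tr M,C^{\dag})=0$ with $\Ext^1_R(\Tr M,R)=0$. You instead invoke Theorem~\ref{t5} directly to swap $\omega_R$ for $C$ at the level of $\trk M$, and then run the spectral sequence of Remark~\ref{rem2}(i) with target $C$, using $\Tor_1^R(\Tr M,C)=0$ to collapse the five-term sequence to an isomorphism $\Ext^1_R(\trk M,C)\cong\Ext^1_R(\Tr M,R)$. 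Both paths are valid; yours has the mild advantage of staying within Section~5 and avoiding the Auslander-class apparatus of Section~4, while the paper's path makes the role of $C^{\dag}$ explicit.
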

\begin{proof}
We may assume that $R$ is complete with canonical module $\omega_R$. By Example \ref{example1}(i), $\omega_R\in\bc$ and so $\Ext^i_R(C,\omega_R)=0$ for all $i>0$. By \cite[Theorem 10.62]{R}, there is the following spectral sequence:
$$\E^{p,q}_2=\Ext^p_R(\Tor_q^R(\Tr M,C),\omega_R)\Rightarrow\Ext^n_R(\Tr M,\Hom_R(C,\omega_R))$$
and so we get the following exact sequence:
\begin{equation}\tag{\ref{cor2}.1}
0\rightarrow\Ext^1_R(\Tr M\otimes_RC,\omega_R)\rightarrow\Ext^1_R(\Tr M,\Hom_R(C,\omega_R))\rightarrow\Hom_R(\Tor_1^R(\Tr M,C),\omega_R),
\end{equation}
by \cite[Theorem 10.33]{R}. As $\Tor_1^R(\Tr M,C)\cong\underline{\Hom}_R(M,C)=0$, we obtain the following exact sequence
\[\begin{array}{rl}\tag{\ref{cor2}.2}
\Ext^1_R(\trk M,\omega_R)&\cong\Ext^1_R(\Tr M\otimes_RC,\omega_R)\\
&\cong\Ext^1_R(\Tr M,\Hom_R(C,\omega_R)),
\end{array}\]
by (\ref{cor2}.1) and Remark \ref{remark3}(i).
By theorem \ref{MS}, $M$ is horizontally linked if and only if $\Ext^1_R(\Tr M,R)=0$. As $\gkd_R(M)<\infty$, $M\in\mathcal{A}_{C^{\dag}}$ by Example \ref{example1}(iv). By Theorem \ref{th5},
$\Ext^1_R(\Tr M,R)=0$ if and only if $\Ext^1_R(\Tr M, \Hom_R(C,\omega_R))=0$ and this is equivalent to say that $\Ext^1_R(\trk M,\omega_R)=0$ by (\ref{cor2}.2).  By Remark \ref{remark3}(ii), we have the following isomorphism:
\begin{equation}\tag{\ref{cor2}.3}
\Ext^i_R(\lambda M\otimes_RC,\omega_R)\cong\Ext^{i+1}_R(\trk M,\omega_R) \text{ for all } i>0.
\end{equation}
By Theorem \ref{t5}, $M$ satisfies $\widetilde{S}_n$ if and only if $\Ext^i_R(\trk M,\omega_R)=0$ for all $i$, $1\leq i\leq n$ and this is equivalent to say that $M$ is horizontally linked and $\Ext^i_R(\lambda M\otimes_RC,\omega_R)=0$ for all $i$, $1\leq i\leq n-1$ by (\ref{cor2}.2) and (\ref{cor2}.3). Now the assertion is clear by the Local Duality Theorem \cite[Corollary 3.5.9]{BH}.
\end{proof}
As an immediate consequence, we have the following results.
\begin{cor}\label{cor4}
Let $(R,\fm)$ be a Cohen-Macaulay local ring, $\fa$ a $\gc$-perfect ideal of $R$. Set $K=\Ext^{\tiny{\gr(\fa)}}_R(R/\fa,C)$. Assume that $M$ and $N$ are $R/\fa$--modules such that $M\underset{\fa}{\thicksim}N$ and $\underline{\Hom}_{R/\fa}(M,K)=0$. If $\gkd_R(M)<\infty$ then
$M$ satisfies $\widetilde{S}_n$ if and only if $\hh_{\fm}^i(N\otimes_RK)=0$ for all $i$, $\dim(R/\fa)-n<i<\dim(R/\fa)$.
\end{cor}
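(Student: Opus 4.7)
The strategy is to reduce the statement to Corollary \ref{cor2} applied to $M$ viewed as a module over $R/\fa$ with respect to the semidualizing module $K$. First I verify that $R/\fa$ is Cohen-Macaulay. Since $\fa$ is $\gc$-perfect, Theorem \ref{G3}(iv) gives $\gr(\fa) = \gkd_R(R/\fa) = \dim R - \depth_R(R/\fa)$; on the other hand, in the Cohen-Macaulay ring $R$ we have $\gr(\fa) = \height(\fa) = \dim R - \dim R/\fa$. Comparing these two identities forces $\dim R/\fa = \depth_R(R/\fa)$, so $R/\fa$ is Cohen-Macaulay.

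Next, by Theorem \ref{G2}, $K$ is a semidualizing $R/\fa$-module, and Theorem \ref{G2}(ii) turns the hypothesis $\gkd_R(M) < \infty$ into $\gkkd_{R/\fa}(M) < \infty$. The relation $M \underset{\fa}{\thicksim} N$ means that $M$ and $N$ are horizontally linked as $R/\fa$-modules, so by Theorem \ref{MS} the $R/\fa$-module $M$ is stable and $N \cong \lambda_{R/\fa} M$.

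With the given hypothesis $\underline{\Hom}_{R/\fa}(M, K) = 0$, Corollary \ref{cor2}, applied to the Cohen-Macaulay local ring $(R/\fa, \fm/\fa)$ of dimension $\dim R/\fa$ with semidualizing module $K$ and stable $R/\fa$-module $M$ of finite $K$-Gorenstein dimension, yields that $M$ satisfies $\widetilde{S}_n$ if and only if
\[
\hh^i_{\fm/\fa}\!\left(\lambda_{R/\fa} M \otimes_{R/\fa} K\right) = 0 \quad \text{for all } i, \ \dim(R/\fa) - n < i < \dim(R/\fa).
\]

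It remains to translate this to the form stated in the corollary. Since $\fa K = 0$, the canonical map $N \otimes_{R/\fa} K \to N \otimes_R K$ is an isomorphism, and local cohomology of an $R/\fa$-module at $\fm$ agrees with its local cohomology at $\fm/\fa$; substituting $N$ for $\lambda_{R/\fa} M$ then gives the desired equivalence. The only point requiring care is the bookkeeping of the change of rings from $R$ to $R/\fa$: the substantive work is already carried out in Corollary \ref{cor2} and in Theorem \ref{G2}, and no further computation is needed.
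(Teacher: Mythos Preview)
Your proof is correct and follows exactly the paper's approach: the paper's proof reads ``This is clear by Corollary \ref{cor2} and Theorem \ref{G2},'' and you have simply unpacked that one line by verifying that $R/\fa$ is Cohen--Macaulay, that $K$ is semidualizing over $R/\fa$ with $\gkkd_{R/\fa}(M)<\infty$, and that the tensor product and local cohomology are unchanged under the passage from $R$ to $R/\fa$.
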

\begin{proof}
This is clear by Corollary \ref{cor2} and Theorem \ref{G2}.
\end{proof}
Note that, for a horizontally linked module $M$ over a Cohen-Macaulay local ring $R$, the properties $\widetilde{S}_k$ and $(S_k)$  are identical.
\begin{cor}\label{cor5} Let $(R,\fm)$ be a Cohen-Macaulay local ring and let $I$ and $J$ be two ideals of $R$ linked by the zero ideal. Assume that $\id_{R_\fp}(C_\fp)<\infty$ for all $\fp\in X^0(R)$ and that $\frac{C}{JC}$ satisfies $\widetilde{S}_1$. If $\gkd_R(R/I)<\infty$, then the following are equivalent:
\begin{enumerate}[(i)]
\item{$R/I$ satisfies $(S_n)$;}
\item{$\hh^i_{\fm}(\frac{C}{JC})=0$ for all $i$, $d-n<i<d$.}
\end{enumerate}
\end{cor}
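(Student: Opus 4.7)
The plan is to derive this corollary essentially as a direct specialization of Corollary \ref{cor2}, with Lemma \ref{l6} supplying the hypothesis needed to invoke it.

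First I would set up the identification: since $I$ and $J$ are linked by the zero ideal, by \cite[Proposition 1]{MS} we have $R/I \cong \lambda(R/J)$ and $R/J \cong \lambda(R/I)$, so in particular $M := R/I$ is horizontally linked (and stable, since it has no nonzero free summand as long as $I \neq 0$ --- the trivial case being handled separately or implicitly). The crucial computation is then
\[
\lambda M \otimes_R C \;\cong\; (R/J) \otimes_R C \;\cong\; C/JC,
\]
so the hypothesis that $C/JC$ satisfies $\widetilde{S}_1$ is exactly the statement that $\lambda M \otimes_R C$ satisfies $\widetilde{S}_1$.

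Next, I would invoke Lemma \ref{l6}: since $R$ is (in particular) semiperfect, $M = R/I$ is horizontally linked, and $\id_{R_\fp}(C_\fp) < \infty$ for all $\fp \in X^0(R)$, the equivalence (i)$\Leftrightarrow$(ii) of that lemma gives $\underline{\Hom}_R(R/I, C) = 0$. Now all hypotheses of Corollary \ref{cor2} are in place for the stable module $M = R/I$ of finite $\gc$-dimension: we conclude that $R/I$ satisfies $\widetilde{S}_n$ if and only if $R/I$ is horizontally linked (which it is) and $\hh^i_\fm(\lambda M \otimes_R C) = 0$ for $d - n < i < d$, i.e. $\hh^i_\fm(C/JC) = 0$ for $d - n < i < d$.

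Finally, to close the equivalence with condition (i) as stated --- namely $(S_n)$ rather than $\widetilde{S}_n$ --- I would use the remark made just after Definition \ref{S} that for a horizontally linked module $M$ over a Cohen-Macaulay local ring, the conditions $(S_n)$ and $\widetilde{S}_n$ coincide. Since $R/I$ is horizontally linked and $R$ is Cohen-Macaulay, the two Serre conditions agree on $R/I$, completing the proof. There is no serious obstacle here; the only point demanding care is making sure the identification $\lambda(R/I) \otimes_R C = C/JC$ is made cleanly, and that the $\widetilde{S}_1$ hypothesis on $C/JC$ is applied in the direction (ii)$\Rightarrow$(i) of Lemma \ref{l6} to produce the vanishing of $\underline{\Hom}_R(R/I, C)$ required by Corollary \ref{cor2}.
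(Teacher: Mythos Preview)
Your proposal is correct and follows essentially the same route as the paper's own proof, which simply cites Corollary \ref{cor2} and Lemma \ref{l6}. The only minor remark is that you need not worry separately about stability of $R/I$: by Theorem \ref{MS}, any horizontally linked module is automatically stable.
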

\begin{proof}
This is clear by Corollary \ref{cor2} and Lemma \ref{l6}.
\end{proof}
\begin{thm}\label{cor6}
Let $R$ be a Cohen-Macaulay local ring. Assume that $M$ is a horizontally linked $R$--module of finite $\gc$-dimension and that $\lambda M\in\mathcal{A}_C$ (e.g. $\Ic$-$\id_R(\lambda M)<\infty$). The following are equivalent:
\begin{enumerate}[(i)]
\item{$M$ is maximal Cohen-Macaulay;}
\item{$\lambda M$ is maximal Cohen-Macaulay;}
\item{$M$ satisfies $(S_n)$ for some $n>\depth R-\depth_R(\lambda M)$;}
\item{$\lambda M$ satisfies $(S_n)$ for some $n>\depth R-\depth_R(M)$.}
\end{enumerate}
\end{thm}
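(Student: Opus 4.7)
The plan is to use Corollary \ref{cor2} applied to $M$ for the equivalences (i) $\Leftrightarrow$ (ii) and (i) $\Leftrightarrow$ (iii), and to appeal to Theorem \ref{t7} for (i) $\Leftrightarrow$ (iv). The first move is to verify $\underline{\Hom}_R(M,C)=0$ so that Corollary \ref{cor2} is available. From the exact sequence (\ref{d1}.2), $0\to\lambda M\to P_1^{*}\to\Tr M\to 0$, the hypothesis $\lambda M\in\mathcal{A}_C$ together with the two-out-of-three closure of $\mathcal{A}_C$ (Example \ref{example1}(i)) forces $\Tr M\in\mathcal{A}_C$; hence $\Tor_1^R(\Tr M,C)=0$, and (\ref{def2}.1) yields $\underline{\Hom}_R(M,C)=0$.

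For (i) $\Leftrightarrow$ (ii), I would note that over a Cohen-Macaulay local ring $M$ is maximal Cohen-Macaulay iff it satisfies $\widetilde{S}_d$, so Corollary \ref{cor2} with $n=d$ rephrases (i) as $\hh^i_{\fm}(\lambda M\otimes_RC)=0$ for $0<i<d$. Assuming $d\geq 1$ (the case $d=0$ is trivial), $\lambda M$ is a syzygy by Theorem \ref{MS}, hence has positive depth, and Lemma \ref{lem2} transfers this to $\depth(\lambda M\otimes_RC)\geq 1$, killing the $i=0$ cohomology. The upgraded vanishing says $\lambda M\otimes_RC$ is maximal Cohen-Macaulay, and Lemma \ref{lem2} transports this back to $\lambda M$. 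For (i) $\Leftrightarrow$ (iii), one direction is immediate since a maximal Cohen-Macaulay $M$ satisfies every $(S_n)$. For the other, if $M$ satisfies $(S_n)$ for some $n>d-\depth\lambda M$, Corollary \ref{cor2} gives vanishing of $\hh^i_{\fm}(\lambda M\otimes_RC)$ on the window $(d-n,d)$; combined with the automatic vanishing on $[0,\depth(\lambda M\otimes_RC))=[0,\depth\lambda M)$, the threshold hypothesis $n>d-\depth\lambda M$ closes the gap, and the full vanishing forces $\lambda M$ to be maximal Cohen-Macaulay.

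The equivalence (i) $\Leftrightarrow$ (iv) is a direct invocation of Theorem \ref{t7}: by Theorem \ref{G3}(iv), $M$ is maximal Cohen-Macaulay iff $\gkd_R(M)=0$, and Theorem \ref{t7} identifies this with $\lambda M$ satisfying $\widetilde{S}_n$ for some $n>\depth R-\depth_R M$. Since $\lambda M$ is horizontally linked over a Cohen-Macaulay local ring, $\widetilde{S}_n$ coincides with $(S_n)$ (as recorded after Definition \ref{S}), yielding (iv). The main technical obstacle is the preliminary $\underline{\Hom}$-vanishing; without it, Corollary \ref{cor2} cannot be applied to $M$ and one is forced to disentangle $\lambda_C M$ from $\lambda M\otimes_RC$ by hand. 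Once that hurdle is cleared, the rest of the argument is just bookkeeping of local-cohomology vanishing ranges against Serre-condition thresholds.
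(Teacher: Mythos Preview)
Your proof is correct and the core argument for (iii)$\Rightarrow$(ii) is identical to the paper's: establish $\underline{\Hom}_R(M,C)=0$ from $\Tr M\in\mathcal{A}_C$, invoke Corollary~\ref{cor2} to get the local-cohomology vanishing window for $\lambda M\otimes_RC$, use Lemma~\ref{lem2} to match depths, and close the gap with the threshold hypothesis. The only difference is organizational. The paper dispatches (i)$\Leftrightarrow$(ii)$\Leftrightarrow$(iv) by citing an external result (\cite[Corollary 4.7]{DS1}), whereas you prove everything internally: (i)$\Leftrightarrow$(ii) is your (iii)$\Rightarrow$(ii) argument specialized to $n=d$ (plus the easy converse via Lemma~\ref{lem2}), and (i)$\Leftrightarrow$(iv) comes from Theorem~\ref{t7} together with Theorem~\ref{G3}(iv). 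Your route has the advantage of being self-contained within the present paper; the paper's route is shorter on the page but relies on the reader chasing the reference.
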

\begin{proof}
The equivalence of (i), (ii) and (iv) follows from \cite[Corollary 4.7]{DS1}.

(iii)$\Rightarrow$(ii). By Example \ref{example1}(i), $\Tr M\in\ac$, and so $\underline{\Hom}_R(M,C)\cong\Tor_1^R(\Tr M,C)=0$. By Corollary \ref{cor2}, $\hh^i_\fm(\lambda M\otimes_RC)=0$ for all $i$, $\depth R-n<i<\depth R$. Set $t=\depth_R(\lambda M)$. Note that $\depth_R(\lambda M\otimes_RC)=t$ by Lemma \ref{lem2}.
Hence $\hh^i_\fm(\lambda M\otimes_RC)=0$ for all $i$, $i<t$. As $t>\depth R-n$, we conclude that $\lambda M\otimes_RC$ is maximal Cohen-Macaulay. Therefore $\lambda M$ is maximal Cohen-Macaulay by using Lemma \ref{lem2} again.

(i)$\Rightarrow$(iii). If $M$ is maximal Cohen-Macaulay then $M$ satisfies $(S_n)$ for all $n$.
\end{proof}
The following is a generalization of \cite[Theorem 10]{MS}.
\begin{thm}\label{t8}
Let $(R,\fm)$ be a Cohen-Macaulay local ring of dimension $d\geq2$ and $M$ a horizontally linked $R$--module.
Assume that $\gcpd(M_\fp)<\infty$ for all $\fp\in\Spec(R)-\{\fm\}$ and that $\lambda M\in\ac$. Then the following statements hold true:
\begin{enumerate}[(i)]
\item{$M$ is generalized Cohen-Macaulay if and only if $\lambda M$ is;}
\item{If $M$ is generalized Cohen-Macaulay, then $$\hh^i_{\fm}(M)\cong\Ext^i_R(\lambda M,R) \text{ for all } i, 0<i<d,$$
$$\hh^i_{\fm}(\lambda M)\cong\Ext^i_R(M,R) \text{ for all } i, 0<i<d;$$
In particular, If $M$ is not maximal Cohen-Macaulay, then $\depth_R(M)=\rgr(\lambda M)$.}
\end{enumerate}
\end{thm}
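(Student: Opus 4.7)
The plan is to prove (i) via a local argument that reduces to the maximal Cohen-Macaulay equivalence of Theorem \ref{cor6}, and to derive the two isomorphisms of (ii) by applying Corollary \ref{cor1}(ii) once to $M$ and once to $\lambda M$. The preliminary observation is that since $\gcpd(M_\fp)<\infty$ for every $\fp\neq\fm$, Theorem \ref{G3}(iv) equates $M_\fp$ being maximal Cohen-Macaulay with $\gcpd(M_\fp)=0$. Combined with \cite[Lemma 1.2, Lemma 1.4]{T}, this makes "$M$ generalized Cohen-Macaulay" equivalent to "$\gcpd(M_\fp)=0$ for all $\fp\in\Spec(R)\setminus\{\fm\}$".

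For part (i), fix $\fp\neq\fm$. Since horizontal linkage and Auslander class membership are each preserved under localization, $M_\fp$ is horizontally linked to $(\lambda M)_\fp$ with $(\lambda M)_\fp\in\mathcal{A}_{C_\fp}$, and $\gkd_{R_\fp}(M_\fp)<\infty$. Applying Theorem \ref{cor6} in $R_\fp$ yields that $M_\fp$ is maximal Cohen-Macaulay if and only if $(\lambda M)_\fp$ is. Ranging over all $\fp\neq\fm$ proves $M$ is generalized Cohen-Macaulay iff $\lambda M$ is. Granting this with $M$ generalized Cohen-Macaulay, so $\gcpd(M_\fp)=0$ for every $\fp\neq\fm$, Corollary \ref{cor1}(ii) at once gives the first isomorphism $\hh^i_\fm(M)\cong\Ext^i_R(\lambda M,R)$ for $0<i<d$.

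For the symmetric isomorphism I apply Corollary \ref{cor1}(ii) with $M$ and $\lambda M$ interchanged. Stability and horizontal linkage of $\lambda M$ are automatic. To secure $\gcpd((\lambda M)_\fp)=0$ for $\fp\neq\fm$, I use that at such $\fp$, $\gcpd(M_\fp)=0$ forces $\gcpd((\Tr M)_\fp)=0$ by Theorem \ref{G3}(ii), so the syzygy $(\lambda M)_\fp$ of $(\Tr M)_\fp$ from the exact sequence (\ref{d1}.2) inherits $\gc$-dimension zero. The remaining hypothesis $\lambda(\lambda M)=M\in\ac$ I extract from $\lambda M\in\ac$ by propagating Auslander class membership through the transpose sequences (\ref{d1}.1) and (\ref{d1}.2) of $\lambda M$ together with the two-of-three closure of $\ac$ recorded in Example \ref{example1}(i). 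This yields $\hh^i_\fm(\lambda M)\cong\Ext^i_R(M,R)$ for $0<i<d$. Finally, Theorem \ref{MS} makes a horizontally linked module a syzygy, so embedding $M$ into a free module and using $\depth R=d\geq 2$ gives $\Gamma_\fm(M)=0$, whence $\depth_R(M)=\inf\{i\geq 1:\hh^i_\fm(M)\neq 0\}$. When $M$ is not maximal Cohen-Macaulay this infimum lies strictly below $d$, and the first isomorphism rewrites it as $\inf\{i\geq 1:\Ext^i_R(\lambda M,R)\neq 0\}=\rgr(\lambda M)$. The principal obstacle is verifying the Auslander class step $M\in\ac$ required for the symmetric application of Corollary \ref{cor1}(ii); the remainder is a direct transcription of the preceding results.
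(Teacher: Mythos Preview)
Your treatment of part (i) and of the first isomorphism in (ii) matches the paper's: both reduce to the punctured-spectrum characterization of generalized Cohen--Macaulay modules together with Theorem~\ref{cor6}, and both invoke Corollary~\ref{cor1}(ii) directly for $\hh^i_\fm(M)\cong\Ext^i_R(\lambda M,R)$.

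The second isomorphism is where your argument breaks down. First, Theorem~\ref{G3}(ii) asserts $\gkd_R(M)=0\Leftrightarrow\gkd_R(\trk M)=0$, where $\trk M=\Tr M\otimes_RC$; it says nothing about $\gkd_R(\Tr M)$, so your deduction that $\gcpd((\Tr M)_\fp)=0$ is a mis-citation, and the subsequent syzygy step to $\gcpd((\lambda M)_\fp)=0$ is not justified. Second---and this is the serious gap you yourself flag---the propagation argument for $M\in\ac$ does not go through. From the sequences (\ref{d1}.1) and (\ref{d1}.2) of $\lambda M$ you obtain
\[
0\to(\lambda M)^*\to Q_0^*\to M\to 0\quad\text{and}\quad 0\to M\to Q_1^*\to\Tr(\lambda M)\to 0,
\]
and two-of-three would give $M\in\ac$ only if you already knew $(\lambda M)^*\in\ac$ or $\Tr(\lambda M)\in\ac$. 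Neither follows from $\lambda M\in\ac$: the Auslander class is closed under syzygies and extensions, but not under $\Hom_R(-,R)$ or under $\Tr$ in general. (Using the sequences of $M$ instead yields $M^*\in\ac$ and $\Tr M\in\ac$, which is equally unhelpful.) The hypotheses are genuinely asymmetric in $M$ and $\lambda M$, and there is no reason to expect $M\in\ac$.

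The paper avoids this by not attempting a symmetric application of Corollary~\ref{cor1}(ii). Instead it observes, via Theorem~\ref{t7} applied locally, that $\gcpd(M_\fp)=0$ forces the \emph{ordinary} Gorenstein dimension $\gd_{R_\fp}(M_\fp)=0$ for all $\fp\neq\fm$; hence $\Ext^i_R(M,R)$ has finite length for every $i>0$, and the isomorphism $\hh^i_\fm(\lambda M)\cong\Ext^i_R(M,R)$ then follows from \cite[Theorem~4.4]{DS}, a result phrased in terms of $\gd$ rather than $\gkd$. The passage from $\gc$-dimension to $\g$-dimension supplied by Theorem~\ref{t7} is precisely the ingredient your symmetric strategy lacks.
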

\begin{proof}
By \cite[Lemma 1.2 , Lemma 1.4]{T}, $M$ is generalized Cohen-Macaulay module if and only if $M_\fp$ is a maximal Cohen-Macaulay module $R_\fp$--module for all $\fp\in\Spec(R)-\{\fm\}$. Hence the first assertion is clear by Corollary \ref{cor6}.

(ii) If $M$ is generalized Cohen-Macaulay, then $\gcpd(M_\fp)=0$ for all $\fp\in\Spec (R)-\{\fm\}$ by Theorem \ref{G3}(iv).
Hence $\hh^i_{\fm}(M)\cong\Ext^i_R(\lambda M,R)$ for all $i$, $0<i<d$ by Corollary \ref{cor1}(ii). By Theorem \ref{t7}, $\gd_{R_\fp}(M_\fp)=0$ for all $\fp\in\Spec (R)-\{\fm\}$ and so $\Ext^i_R(M,R)$ is finite length for all $i>0$. Now the second isomorphism follows from \cite[Theorem 4.4]{DS}.
\end{proof}
As an immediate consequence, we have the following result.
\begin{cor}\label{cor9}
Let $(R,\fm)$ be a Cohen-Macaulay local ring, $\fc$ a $\gc$-perfect ideal of $R$. Set $K=\Ext^{\gr(\fc)}_R(R/\fc,C)$. Assume that $M$, $N$ are $R/\fc$--modules such that $M\underset{\fc}{\thicksim}N$. If $\gkd_R(M)<\infty$ and $N\in\mathcal{A}_K$, then the following statements hold true:
\begin{enumerate}[(a)]
\item{The following are equivalent:
\begin{enumerate}[(i)]
\item{$M$ is Cohen-Macaulay;}
\item{$N$ is Cohen-Macaulay;}
\item{$M$ satisfies $(S_n)$ for some $n>\depth(R/\fc)-\depth_{R/\fc}(N)$;}
\item{$N$ satisfies $(S_n)$ for some $n>\depth(R/\fc)-\depth_{R/\fc}(M)$.}
\end{enumerate}}
\item{If $\dim R/\fc\geq2$, then $M$ is generalized Cohen-Macaulay if and only if $N$ is. Moreover, if $M$ is generalized Cohen-Macaulay, then
$$\hh^i_\fm(M)\cong\Ext^i_{R/\fc}(N,R/\fc) \text{ for all } i, 0<i<d.$$ In particular, if $M$ is not Cohen-Macaulay then $\depth_R(M)=\rgr_{R/\fc}(N)$.}
\end{enumerate}
\end{cor}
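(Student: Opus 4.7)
The plan is to reduce both parts of the corollary to Theorem \ref{cor6} and Theorem \ref{t8}, applied to the Cohen-Macaulay local ring $R/\fc$ with the semidualizing $R/\fc$-module $K$. The first step is to invoke Theorem \ref{G2} to collect the basic data: since $\fc$ is $\gc$-perfect and $\gkd_R(M)<\infty$, the ring $R/\fc$ is Cohen-Macaulay, $K$ is a semidualizing $R/\fc$-module, and $\gkkd_{R/\fc}(M)<\infty$. By definition, $M\underset{\fc}{\thicksim}N$ says that $M$ is horizontally linked as an $R/\fc$-module with $\lambda M\cong N$, and the hypothesis $N\in\mathcal{A}_K$ supplies the Auslander-class condition on $\lambda M$ that Theorem \ref{cor6} demands.

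Part (a) is then immediate by applying Theorem \ref{cor6} to $(R/\fc,K)$ and the pair $(M,N)$; one uses only that the depth of an $R/\fc$-module agrees whether computed over $R$ or over $R/\fc$. For part (b) the additional input needed is the hypothesis of Theorem \ref{t8} that $M_\fq$ has finite $\mathsf{G}_{K_\fq}$-dimension over $(R/\fc)_\fq$ for each $\fq\in\Spec(R/\fc)\setminus\{\fm/\fc\}$, which I would obtain by localizing Theorem \ref{G2} once one verifies that $\fc_\fp$ remains $\mathsf{G}_{C_\fp}$-perfect. Theorem \ref{t8} then delivers the generalized Cohen-Macaulay equivalence together with the isomorphism $\hh^i_\fm(M)\cong\Ext^i_{R/\fc}(N,R/\fc)$, using $\hh^i_\fm(-)=\hh^i_{\fm/\fc}(-)$ for $R/\fc$-modules. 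For the final depth equality I would observe that a horizontally linked $R/\fc$-module $M$ is a syzygy over $R/\fc$, so $\Ass_{R/\fc}(M)\subseteq\Ass_{R/\fc}(R/\fc)$; since $R/\fc$ is Cohen-Macaulay, this forces $\dim_{R/\fc}(M)=\dim R/\fc$, so ``$M$ not Cohen-Macaulay'' and ``$M$ not maximal Cohen-Macaulay over $R/\fc$'' coincide, and the last clause of Theorem \ref{t8}(ii) applies verbatim.

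The main obstacle will be the localization step in part (b): confirming that the $\gc$-perfectness of $\fc$ descends to $\fc_\fp$ with the correct grade so that the hypotheses of Theorem \ref{t8} are verified at every relevant prime. Everything else is a direct transcription of Theorems \ref{cor6} and \ref{t8} through the base change from $R$ to $R/\fc$.
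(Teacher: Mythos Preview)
Your proposal is correct and follows exactly the paper's approach, which simply cites Theorem \ref{cor6}, Theorem \ref{t8}, and Theorem \ref{G2}. Your localization concern in part (b) dissolves once you note that $\gkkd_{R/\fc}(M)<\infty$ (from Theorem \ref{G2}) already localizes to give finite $\mathsf{G}_{K_\fq}$-dimension at every prime $\fq$ of $R/\fc$, so there is no need to check $\gc$-perfectness of $\fc_\fp$ separately.
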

\begin{proof}
This is clear by Theorem \ref{cor6}, Theorem \ref{t8} and Theorem \ref{G2}.
\end{proof}
Note that if $\fc$ is a $\gc$-Gorenstein ideal then $K=\Ext^{\gr(\fc)}_R(R/\fc,C)\cong R/\fc$. Hence Theorem \ref{th2} is a special case of Corollary\ref{cor9}.
\bibliographystyle{amsplain}

\begin{thebibliography}{9}

\bibitem{AF}
~F.~W. Anderson and ~K. ~R. Fuller, \emph{Rings and Categories of
Modules}, Second edition, Springer-Verlag, 1992.

\bibitem{AB}
~M. Auslander and ~M. Bridger, \emph{Stable module theory}, Mem. of
the AMS 94, Amer. Math. Soc., Providence 1969.

\bibitem{BH}
~W. Bruns and ~J. Herzog, \emph{Cohen-Macaulay Rings}, Cambridge
Studies in Advanced Mathematics, 39. Cambridge University Press,
Cambridge, 1993.

\bibitem{DS}
~M.~T. Dibaei,~A. Sadeghi, \emph{Linkage of finite Gorenstein
dimension modules }, J. Algebra 376 (2013) 261-278.

\bibitem{DS1}
~M.~T. Dibaei,~A. Sadeghi, \emph{Linkage of modules and the Serre conditions}, J. Pure Appl. Algebra 219 (2015) 4458-4478.

\bibitem{F}
~H.~B. Foxby, \emph{Gorenstein modules and related modules}, Math.
Scand. 31 (1972), 267-285.

\bibitem{F1}
~H.~B. Foxby, \emph{Quasi-perfect modules over Cohen-Macaulay
rings}, Math. Nachr. 66 (1975), 103-110.

\bibitem{Ge}
~A.~A. Gerko, \emph{On homological dimensions}(Russian) Mat. Sb. 192
(2001), no. 8, 79-94; translation in Sb. Math. 192 (2001), no. 7-8,
1165-1179.

\bibitem{G}
~E.~S. Golod, \emph{G-dimension and generalized perfect ideals},
Trudy Mat. Inst. Steklov. 165 (1984), 62-66; English transl. in
Proc. Steklov Inst. Math. 165 (1985).

\bibitem{HJ}
~H. Holm, ~P. J{\o}rgensen, \emph{Semi-dualizing modules and related Gorenstein homological dimensions},
J. Pure Appl. Algebra 205 (2006) 423-445.

\bibitem{HH}
~C. Huang, ~Z. Huang, \emph{Gorenstein syzygy modules}. J. Algebra. 324, 3408-3419 (2010).

\bibitem{Hu}
~C. Huneke, \emph{Linkage and the Koszul homology of ideals}, Amer.
J. Math. 104 (1982), no. 5, 1043-1062.

\bibitem{HuUl}
~C. Huneke, ~B. Ulrich, \emph{The Structure of Linkage}, Ann. of
Math.126(1987), 277–334.

\bibitem{L}
~T.Y. Lam, \emph{A First Course in Noncommutative Rings},
Springer-Verlag, 1991.

\bibitem{LY}
~Z. Liu, ~X. YAN, \emph{Gorenstein transpose with respect to a semidualizing bimodule}. Turk J Math
(2013) 37: 404-417.

\bibitem{M1}
~V. Ma\c{s}ek, \emph{Gorenstein dimension and torsion of modules
over commutative Noetherian rings}, Comm. Algebra (2000), 5783-5812.

\bibitem{MS}
~A. Martsinkovsky and ~J.~R. Strooker, \emph{Linkage of modules}, J.
Algebra 271 (2004), 587-626.

\bibitem{PS}
~C. Peskine and ~L. Szpiro, \emph{Liasion des vari\'{e}t\'{e}s
alg\'{e}briques.} I, Inv. math, 26 (1974), 271-302.

\bibitem{R}
~Joseph J. Rotman. \emph{An introduction to homological algebra}. Universitext. Springer,
New York, second edition, 2009.

\bibitem{Sc}
~P. Schenzel, \emph{Notes on liaison and duality }, J. Math. Kyoto
Univ. 22 (1982/83), no. 3, 485-498.

\bibitem{TW}
~R. Takahashi and ~D. White, \emph{Homological aspects of
semidualizing modules}. Math. Scand. 106 (2010), no. 1, 5-22.

\bibitem{T}
~N. ~V. Trung, \emph{Toward a theory of generalized Cohen-Macaulay modules}, Nagoya Math. J.
Vol. 102 (1986), 1-49.

\bibitem{y}
~Y. ~Yoshino,\emph{ Cohen-Macaulay modules over Cohen-Macaulay rings}, London Mathematical
Society Lecture Note Series, vol. 146, Cambridge University Press, Cambridge, 1990

\end{thebibliography}

\end{document}